\theoremstyle{plain}
\newtheorem{thm}{Theorem}
\newtheorem{lemma}{Lemma}
\newtheorem{prop}{Proposition}
\newtheorem{cor}{Corollary}
\theoremstyle{definition}
\newtheorem{fact}{Fact}
\newtheorem{defn}{Definition}
\newtheorem{assumption}{Assumption}
\newtheorem{property}{Property}
\theoremstyle{remark}
\newtheorem{myexample}{Example}
\newtheorem{oss}{Remark}
\newcommand{\virgolette}[1]{``#1''}
\newcommand{\B}{\mathbb{B}}
\newcommand{\R}{\mathbb{R}}
\newcommand{\N}{\mathbb{N}}
\newcommand{\cX}{\mathcal{X}}
\newcommand{\cY}{\mathcal{Y}}
\newcommand{\cPD}{\mathcal{PD}}
\newcommand{\cC}{\mathcal{C}}
\newcommand{\cK}{\mathcal{K}}
\newcommand{\cN}{\mathcal{N}}
\newcommand{\cO}{\mathcal{O}}
\newcommand{\cI}{\mathcal{I}}
\newcommand{\cJ}{\mathcal{J}}
\newcommand{\cS}{\mathcal{S}}
\newcommand{\cU}{\mathcal{U}}
\newcommand{\cV}{\mathcal{V}}
\newcommand{\cP}{\mathcal{P}}
\newcommand{\wt}{\widetilde}
\newcommand{\sw}{\textnormal{sw}}
\newcommand{\dom}{\mathop{\rm dom}\nolimits}
\newcommand{\Ri}{\mathbb{R}^{n_1}}
\newcommand{\Rii}{\mathbb{R}^{n_2}}
\DeclareMathOperator{\co}{co}
\DeclareMathOperator{\bd}{bd}
\DeclareMathOperator{\inn}{int}
\DeclareMathOperator*{\esssup}{ess\,sup}
\newcommand{\Sym}{\text{Sym}}
\DeclarePairedDelimiterX{\inp}[2]{\langle}{\rangle}{#1, #2}
\begin{document}

\title{Nonpathological ISS-Lyapunov Functions \\ for Interconnected Differential Inclusions}

\author{Matteo~Della Rossa \and Aneel Tanwani \and Luca Zaccarian
\thanks{M.~Della Rossa is affiliated with UCLouvain, (Louvain-La-Neuve, Belgium). E-mail: {\tt matteo.dellarossa@uclouvain.be}. A.~Tanwani and L.~Zaccarian are affiliated with LAAS -- CNRS, University of Toulouse, France.  \
L.~Zaccarian is also with Dept. of Industrial Engineering, University of Trento, Italy.
This work was partially supported by the ANR project {\sc ConVan} with grant number ANR-17-CE40-0019-01.}
}

\maketitle

\begin{abstract}
This article concerns robustness analysis for interconnections of two dynamical systems (described by upper semicontinuous differential inclusions) using a generalized notion of derivatives associated with locally Lipschitz Lyapunov functions obtained from a finite family of differentiable functions. We first provide sufficient conditions for input-to-state stability (ISS) for differential inclusions, using a class of non-smooth (but locally Lipschitz) candidate Lyapunov functions and the concept of Lie generalized derivative. In general our conditions are less conservative than the more common Clarke derivative based conditions. We apply our result to state-dependent switched systems, and to the interconnection of two differential inclusions. As an example, we propose an observer-based controller for certain nonlinear two-mode state-dependent switched systems.
\end{abstract}


%

\section{Introduction}
For analyzing stability or performance of integrated or large-scale dynamical systems, it is natural to consider them as a collection of several subsystems of lower dimension/complexity. After a certain abstraction, the behavior of the overall system can be obtained either by switching among the constituent subsystems, or through certain interconnections of the underlying subsystems, or through a combination of these. This viewpoint of analyzing complex systems provides the motivation to consider stability and robustness analysis for interconnections of switched systems.

Given a family of vector fields $\{f_1,\dots,f_K\}\subset \cC^1(\R^n\times \R^m,\R^n)$ and a switching signal $\sigma :\R^n\to \{1,\dots,K\}$, we consider the system
\begin{equation}\label{eq:IntroSwitchSys}
\dot x=f_{\sigma(x)}(x,u).
\end{equation}
For studying generalized solutions of such discontinuous systems, we extend the map $f_{\sigma(\cdot)}(\cdot,u)$, considering the Filippov regularization of~\eqref{eq:IntroSwitchSys} (see \cite{filippov1988differential}). This leads to a differential inclusion of the form (see Section \ref{sec:statedep} for details)
\begin{equation}\label{eq:IntroDiffINc}
\dot x\in F(x,u),
\end{equation}
where $F$ satisfies some regularity assumptions. Our first objective is to study asymptotic stability and robustness with respect to $u$ for system \eqref{eq:IntroDiffINc}.
We then apply our results to the analysis of interconnected systems of the form
 \begin{equation}\label{eq:IntroInterconnection}
\left\{\begin{aligned}
\dot x_1 &\in F_1(x_1,x_2,u),\\
 \dot x_2 &\in F_2(x_1,x_2,u).
 \end{aligned}\right.
\end{equation}

In the theory of nonlinear control systems, the concept of input-to-state stability (ISS), introduced in \cite{sontag89}, has been widely used to study the robustness of dynamical subsystems to external disturbances. Because of its elegant characterization in terms of Lyapunov functions, ISS is now perceived as a textbook tool for analyzing the performance of nonlinear systems,~\cite{khalil2002nonlinear}. For example, the ISS notion has been useful in analyzing interconnections of two dynamical systems, either in cascade form \cite{SonTeel95}, or in feedback by using the small-gain condition \cite{JianTeel94, JiaMarWan96, Ito19}. Moving away from the framework of conventional nonlinear systems, the ISS notion has been generalized to systems with continuous and discrete dynamics. In this regard, we find sufficient conditions in terms of slow switching for ISS of time-dependent switched systems in  \cite{VuCha07}, characterization of ISS for hybrid systems with jump dynamics in \cite{cai2009characterizations}, \cite{caiteel2013}, or for a class of differential inclusions in \cite{JayLog09}. More recently, we have seen ISS results for interconnections of hybrid systems \cite{LibNes14, Sanf14}, and time-dependent switched systems \cite{YangLibe15, tanwani2018}.

By and large, most of the aforementioned results in the literature deal with \emph{smooth} Lyapunov functions. This is partially justified by the fact that the existence of a smooth Lyapunov function is not only sufficient but also necessary for asymptotic stability of the equilibrium \cite{CLARKE199869}, \cite{teel2000}, \cite{dayawansa}, and for ISS with respect to external perturbations \cite{Lin1996}. Under some assumptions, this implications holds true also in the context of hybrid systems, as proved in~\cite{goebel2012hybrid}, \cite{caiteel2013}. The recent survey on converse Lyapunov theorems \cite{Kellett2015} provides an insightful background on such developments. However, in the context of switched and hybrid systems, the ``composite'' structure  itself provides the motivation to work with multiple smooth Lyapunov functions, see for example~\cite[Chapter 3]{liberzon} and~\cite{johansson}. More specifically, when dealing with time-dependent switched systems, these multiple Lyapunov functions can still be combined to get a smooth (with respect to the state) composite Lyapunov function. Such constructions have been seen in analyzing ISS of switched system \cite{VuCha07} and certain interconnections \cite{YangLibe15, tanwani2018}. When dealing with state-dependent switched systems, the {\em patching} of the Lyapunov functions may make the resulting common Lyapunov function non-differentiable, but locally Lipschitz in most cases. This element is seen in the analysis of asymptotic stability using piecewise differentiable functions \cite{johansson, BaiGru12}, and to some extent for establishing ISS in~\cite{heemels2007input}, \cite{HeemWei08}. For trajectory-based conditions for ISS of state-dependent switched systems, see the recent paper \cite{LiuLib18}.

This paper is about developing sufficient conditions for ISS using locally Lipschitz Lyapunov functions for the class of differential inclusions in~\eqref{eq:IntroDiffINc}. The concept of set-valued derivatives for locally Lipschitz functions, introduced in~\cite{clarke3}, \cite{bacciotti99}, is crucial to properly define the notion of \emph{derivatives along the system's trajectories}. In particular, we focus on two different notions of set-valued derivatives that we call \emph{Clarke} and \emph{Lie derivatives}, each of them being a set-valued map from the state space $\R^n$ to the real numbers, see \cite{ceragioli} and \cite{cortes} for the formal definitions. For a large class of locally Lipschitz functions called \emph{non-pathological functions} (as phrased in \cite{valadier1989entrainement}), the notion of Lie derivative leads to less conservative stability conditions, see~\cite{ceragioli} and our recent papers~\cite{ DelRosNOLCOS19} and~\cite{dellarossa19}. As another example, the Lie derivative concept has been recently used in~\cite{kamalapurkar17} to identify and remove infeasible directions of a differential inclusion of the form \eqref{eq:IntroDiffINc}, and for stability analysis using an invariance principle for state-dependent switched systems~\cite{KamRosTeel19}, based on the ideas already introduced in~\cite{SanGoe07}.

The technical content of this paper starts with the use of Lie derivatives for establishing Lyapunov-based ISS results for differential inclusions~\eqref{eq:IntroDiffINc} (but with particular attention to switched systems as in~\eqref{eq:IntroSwitchSys}), while considering non-pathological candidate Lyapunov functions. 
The following original contributions are then presented.
\begin{itemize}[leftmargin=*]
	\item  We propose a novel ISS Lyapunov result for state-dependent switched systems~\eqref{eq:IntroSwitchSys} using \emph{piecewise $\cC^1$} Lyapunov functions, also providing a numerical example that illustrates its relevance. 
	\item We study ISS of the interconnection~\eqref{eq:IntroInterconnection}, using non-smooth Lyapunov functions that satisfy a mild decrease condition based on the Lie derivative, thus generalizing the existing small gain results for state-dependent switched systems based on the Clarke derivative/gradient, as~\cite{LibNes14}, \cite{HeemWei08}. 
	\item When~\eqref{eq:IntroInterconnection} is in \emph{cascade} form, we combine two non-pathological Lyapunov functions to provide ISS certificates. These state-dependent switching results differ significantly from the existing results for interconnected time-dependent switched systems~\cite{YangLibe15, tanwani2018}. 
	\item We finally illustrate the usefulness of our results by performing output feedback stabilization of a state-dependent switched system using an observer-based controller. The arising conditions are shown to become computationally tractable in the switched linear case.
\end{itemize} 

The rest of the article is organized as follows: In Section~\ref{sec:prelim} we provide the basic definitions from non-smooth analysis, with particular attention to the nonpathological class of locally Lipschitz functions, together with the main result on ISS of system~\eqref{eq:IntroDiffINc} using locally Lipschitz Lyapunov functions. In Section~\ref{sec:statedep} we apply our result to state-dependent switched systems. In Section~\ref{sec:Interconnected}, we study interconnected differential inclusions, proposing a Lie derivative-generalization of classical small-gain and cascade arguments. We study the application of our results for feedback stabilization of switched systems in Section~\ref{sec:appStab}. In the Appendix, we prove some technical results on piecewise $\cC^1$ functions used in Section~\ref{sec:statedep}.

\section{Fundamental Tools and Results}\label{sec:prelim}
\subsection{Basic notions for differential inclusions}
We introduce here the formalism of differential inclusions with inputs, and recall the basic concepts of solutions and stability of equilibrium.
Throughout this manuscript, we consider set valued maps $F: \R^n \times \R^m  \rightrightarrows \R^n$, satisfying Assumption~\ref{Assump:Main}.
\begin{assumption}\label{Assump:Main}
Considering a set valued map $F: \R^n \times \R^m  \rightrightarrows \R^n$, we suppose that:
\begin{itemize}[leftmargin=*]
\item $F$ has nonempty, compact and convex values;
\item $F$ is locally bounded (see \cite[Definition 5.14]{rockafellar});
\item For every $u\in \R^m$, $F(\cdot,u):\R^n\rightrightarrows \R^n$ is upper semi-continuous;
\item For every $x\in \R^n$, $F(x,\cdot):\R^m\rightrightarrows \R^n$ is continuous.\hfill $\triangle$
\end{itemize} 
\end{assumption} 
The interested reader is referred to \cite{rockafellar} for a thorough discussion about continuity concepts for set-valued maps. 
We suppose that $F(0,0)=\{0\}$, and consider the differential inclusion
\begin{equation}\label{eq:diffincinp}
\dot x \in {F}(x,u),
\end{equation}
where input $u:\R_+ \to \R^m$ belongs to the set of measurable and locally essentially bounded functions, i.e. 
\[
\cU:=\left\{u:\R_+\to \R^m\,\Big\vert\begin{aligned}\, &\;\;\;\;\;\;u \text{ measurable,}\,\\ &\esssup_{0\leq \tau\leq T }\;|u(\tau)|< \infty, \;\forall T>0\end{aligned} \right \}.
\]
For the unperturbed differential inclusion 
$
\dot x \in F(x,0),
$
the hypotheses that $F(\cdot,0):\R^n\rightrightarrows \R^n$ has closed, convex and non-empty values together with upper semicontinuity are sufficient for the existence of solutions, and are sometimes referred as \emph{basic assumptions} in the literature, (see, for example, \cite{goebel2012hybrid}). On the other hand, the hypothesis that $F:\R^n\times \R^m\rightrightarrows \R^n$ is continuous in the second argument is introduced to handle a large class of inputs like $\cU$.

We introduce here the concepts of solutions: 
Given a vector $x_0\in \R^n$ and an input $u\in \cU$, $x:[0,T)\to \R^n$ (for some $T>0$ and possibly $T=\infty$) is a \emph{(Carathéodory) solution} of system \eqref{eq:diffincinp} starting at $x_0$ if $x:[0,T)\to \R^n$ is locally absolutely continuous, $x(0)=x_0$,
and $\dot x(t)\in F(x(t),u(t))$, for almost every $t\in [0,T)$.
Under the stated assumptions on the map $F:\R^n\times \R^m\rightrightarrows \R^n$, we may prove the following existence result.
\begin{prop}[Local existence]\label{prop:Existence}
Let $F: \R^n \times \R^m  \rightrightarrows \R^n$ satisfy Assumption~\ref{Assump:Main}.
Given any input $u\in \cU$, system~\eqref{eq:diffincinp} has solutions from any initial point $x_0\in \R^n$, i.e. there exists (at least) a Carathéodory solution $x:[0,T)\to \R^n$ of system~\eqref{eq:diffincinp}, for some $T>0$, with $x(0)=x_0$.
\end{prop}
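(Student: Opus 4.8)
The plan is to prove Proposition~\ref{prop:Existence} by recasting \eqref{eq:diffincinp}, with the input $u\in\cU$ now \emph{frozen}, as a time-dependent differential inclusion $\dot x\in G(t,x)$ with $G(t,x):=F(x,u(t))$, and then invoking a classical existence theorem for inclusions whose right-hand side is measurable in $t$ and upper semicontinuous in $x$ (the so-called upper Carath\'eodory conditions), see e.g.\ \cite[\S7]{filippov1988differential}. The core of the argument is then simply to transfer the regularity of $F$ postulated in Assumption~\ref{Assump:Main} to the map $G$, using that $u$ is measurable and locally essentially bounded.

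First I would fix $x_0\in\R^n$, pick $T_0>0$, and choose $M>0$ with $|u(t)|\le M$ for a.e.\ $t\in[0,T_0]$ (possible since $u\in\cU$). Applying local boundedness of $F$ to the compact set $\overline{\B}(x_0,r)\times\overline{\B}(0,M)$, where $\overline{\B}(c,\rho)$ denotes the closed ball of radius $\rho$ about $c$, yields $L>0$ with $F(x,v)\subset\overline{\B}(0,L)$ on that set; hence $|w|\le L$ for every $w\in G(t,x)$ whenever $x\in\overline{\B}(x_0,r)$ and (a.e.) $t\in[0,T_0]$, so $G$ is dominated on the relevant domain by the integrable (in fact constant) function $t\mapsto L$. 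Next I would check the structural properties of $G$: for each fixed $t$, $G(t,\cdot)=F(\cdot,u(t))$ has nonempty, compact, convex values and is upper semicontinuous, directly from Assumption~\ref{Assump:Main}; for each fixed $x$, the map $t\mapsto G(t,x)=F(x,u(t))$ is Lebesgue measurable, because $F(x,\cdot)$ is a continuous set-valued map with compact values --- hence measurable in the sense of \cite[Ch.~5]{rockafellar} --- composed with the measurable function $u$. This is exactly where the continuity of $F$ in its second argument (rather than mere measurability of the composite) is exploited: it guarantees the reduction works simultaneously for \emph{every} admissible $u\in\cU$. Joint $\mathcal{L}\otimes\mathcal{B}$-measurability of $(t,x)\mapsto G(t,x)$ then follows from measurability in $t$ and upper semicontinuity in $x$ by a standard superposition lemma.

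With these properties established, the quoted existence theorem produces a locally absolutely continuous arc $x:[0,\tau]\to\R^n$ with $x(0)=x_0$ and $\dot x(t)\in G(t,x(t))=F(x(t),u(t))$ for a.e.\ $t$. Finally, the a priori bound $|\dot x(t)|\le L$ (valid as long as $x$ stays in $\overline{\B}(x_0,r)$) gives $|x(t)-x_0|\le Lt$, so the solution is defined and stays in that ball at least on $[0,T)$ with $T:=\min\{T_0,\,r/L\}>0$, which is the desired conclusion. In my view the only real obstacle is locating and invoking the \emph{correct} existence result: the ``basic assumptions'' statement recalled just after \eqref{eq:diffincinp} covers only the autonomous inclusion $\dot x\in F(x,0)$, so one genuinely needs the version for right-hand sides that are merely measurable in time, together with the attendant measurable-selection/superpositional-measurability bookkeeping. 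If a self-contained route is preferred, one can instead build polygonal Euler-type approximate solutions $x_k$, extract a uniformly convergent subsequence via Arzel\`a--Ascoli using the common Lipschitz bound $L$, and pass to the limit: weak-$*$ convergence of $\dot x_k$ combined with Mazur's lemma and the convexity plus upper semicontinuity of $F(\cdot,u(t))$ yields $\dot x(t)\in F(x(t),u(t))$ a.e., recovering the inclusion.
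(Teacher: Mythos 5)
Your proposal is correct and follows essentially the same route as the paper: freeze $u$, pass to the non-autonomous inclusion $F_u(t,x):=F(x,u(t))$, verify the upper-Carath\'eodory hypotheses (nonempty compact convex values, upper semicontinuity in $x$, measurability in $t$, integrable bound from local boundedness of $F$), and invoke a classical existence theorem. The only cosmetic difference is that the paper certifies measurability in $t$ by extracting a continuous single-valued selection $f(x,\cdot)$ of $F(x,\cdot)$ and composing it with $u$, whereas you argue measurability of the composed set-valued map $t\mapsto F(x,u(t))$ directly; both are standard and correct.
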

\begin{proof}
Considering any input $u\in \cU$, we define $F_u:\R_+\times \R^n\rightrightarrows \R^n$ by $F_u(t,x):=F(x, u(t))$ and we prove the existence of solutions of the non-autonomous differential inclusion  
\begin{equation*}
\dot x(t)\in F_u(t,x(t)).
\end{equation*}
By hypothesis, $F_u$ is upper semicontinuous with respect to the $x$-argument.
By continuity of $F$ with respect to the second argument, for every $x\in \R^n$, we can extract a continuous function
$f(x,\cdot):\R^m\to \R^n$ such that  $f(x,u)\in F(x,u)$, for every $u\in \R^m$, see for example \cite[Example 5.57]{rockafellar} or \cite[Lemma 2.1]{Dei92}.
Thus, $f(x,u(\cdot)):\R_+\to \R^n$ is a measurable function such that $f(x,u(t))\in F(x,u(t))=F_u(t,x)$.
Since $F:\R^n\times \R^m\rightrightarrows \R^n$ is locally bounded, $F_u:\R_+\times \R^n\rightrightarrows \R^n$ is locally essentially bounded, and hence it is locally bounded by integrable functions. 
We can then apply~\cite[Corollary 5.2]{Dei92} to conclude local existence of solutions.
\end{proof}
Next, we recall the input-to-state stability (ISS) concept, firstly introduced in~\cite{sontag89}.
\begin{defn}\label{def:ISS}
System \eqref{eq:diffincinp} is \emph{input-to-state stable (ISS)} with respect to $u$  if there exist a class $\mathcal{KL}$ function $\beta$, and a class $\mathcal{K}$ function\footnote{A function $\alpha:\R_{\geq 0}\to \R$ is \emph{positive definite} ($\alpha\in\cPD$) if it is continuous, $\alpha(0)=0$, and $\alpha(s)>0$ if $ s\neq 0$. A function $\alpha:\R_{\geq 0}\to \R_{\geq 0}$ is of \emph{class $\cK$} ($\alpha \in \cK$) if it is continuous, $\alpha(0)=0$, and strictly increasing; it is of \emph{class $\cK_\infty$} if, in addition, it is unbounded. A continuous function $\beta:\R_+\times \R_+\to \R_+$ is of \emph{class $\mathcal{KL}$} if $\beta(\cdot,s)$ is of class $\cK$ for all $s$, and $\beta(r,\cdot)$ is decreasing and $\beta(r,s)\to 0$ as $s\to\infty$, for all $r$. } $\chi$ such that, for any $x_0 \in \R^n$ and for any input $u \in \cU$,
all the solutions starting at $x_0$ satisfy 
\begin{equation}\label{eq:issbounds}
|x(t)| \leq \beta(|x_0|, t)+ \chi \bigl(\esssup_{0 \leq \tau \leq t}|u(\tau)|\bigr),\;\;\;\forall t\geq 0.
\end{equation}
\end{defn}
Recalling the definition of $\cU$, bound \eqref{eq:issbounds} ensures that the solutions $x$ are uniformly bounded, and thus \emph{complete}, i.e.  $\dom(x(\cdot))=[0,+\infty)$.
It is  clear that ISS of \eqref{eq:diffincinp} implies global asymptotic stability (GAS) in the unperturbed case $u\equiv 0$.\hfill $\triangle$

\subsection{Generalized derivatives}
Our aim is to prove ISS of system~\eqref{eq:diffincinp} via non-smooth Lyapunov functions, and thus in the following we collect various notions of generalized derivatives and gradients. 
Given a locally Lipschitz function $V:\R^n \to \R$ we have the following characterization of the Clarke generalized gradient~\cite[Theorem 2.5.1, page 63]{clarke3} which is taken here as a definition.
Let $V:\R^n \to \R$ be a locally Lipschitz function, Clarke generalized gradient of $V$ at $x$ is
\begin{equation}\label{eq:limClark}
\partial V(x):= \co \left\{\lim_{k \to \infty} \nabla V(x_k) \, \vert \, x_k \to x, \;x_k \notin \cN_V \right\},
\end{equation}
 where $\cN_V \subseteq \R^n$ is the set where $\nabla V$ is not defined, which has zero Lebesgue measure by Rademacher's Theorem, and $\co(S)$ denotes the convex hull of a set $S\subseteq \R^n$.
We now introduce two different notions of generalized directional derivatives for locally Lipschitz functions with respect to  differential inclusion \eqref{eq:diffincinp}, which appeared firstly in \cite{bacciotti99}.

\begin{defn}[Set-valued directional derivatives \cite{bacciotti99, cortes}]\label{def:geneder}
Consider the unperturbed differential inclusion \eqref{eq:diffincinp} with $u\equiv 0$; given a locally Lipschitz continuous function $V:\R^n \to \R$, the \emph{Clarke generalized derivative} of $V$ with respect to $F$, denoted $\dot V_F(x)$, is defined as
\begin{equation*}
\dot V_F(x) := \{\inp{p}{f} \; | \; p \in \partial V(x),\, f \in F(x,0) \, \}.
\end{equation*}
Additionally, we define the \textit{Lie generalized derivative} of $V$ with respect to $F$, denoted $\dot{\overline{V}}_F$, as
\begin{equation*}
\dot{\overline{V}}_F(x):=\{ a \in \R \,|\, \exists f \in F(x,0): \inp{p}{f}=a, \, \forall p \in \partial V(x) \}.
\end{equation*}
These concepts can be extended to the case of a perturbed differential inclusion with input \eqref{eq:diffincinp} as follows:
\begin{equation}\label{eq:LieWithInput}
\begin{aligned}
\dot V_{F}(x,u) &:= \{\inp{p}{f} \; | \; p \in \partial V(x),\, f \in {F}(x,u) \, \},\\
\dot{\overline{V}}_{{F}}(x,u)&:=\{ a \in \R \,|\, \exists f \in {F}(x,u): \inp{p}{f}=a, \, \forall p \in \partial V(x) \}.
\end{aligned}
\end{equation}
\end{defn}
For each $(x,u)\in\R^n\times \R^m$ the sets $\dot V_{F}(x,u)$ and $\dot{\overline{V}}_{{F}}(x,u)$ are closed and bounded intervals, with $\dot{\overline{V}}_{{F}}(x,u)$ possibly empty, see~\cite{ceragioli}. In particular
\begin{equation}\label{eq:setdiffinclusion}
\dot{\overline{V}}_F(x,u) \subseteq \dot V_F(x,u).
\end{equation}
Moreover, if $V$ is continuously differentiable at $x$, one has $\partial V(x)=\{\nabla V(x)\}$ and thus  
\[
\dot{\overline{V}}_F(x,u)=\dot V_F(x,u)=\{\inp{\nabla V(x)}{f} \, \vert \, f \in F(x,u) \}.
\]
\subsection{nonpathological Functions}
We now introduce a class of locally Lipschitz functions (and not necessarily $\mathcal{C}^1$), firstly introduced in \cite{valadier1989entrainement}.

\begin{defn}\cite{valadier1989entrainement}\label{def:nonpat}
A locally Lipschitz function $V:\R^n \to \R$ is said to be  $\emph{nonpathological}$  if, given any absolutely continuous function $\varphi \in AC(\R_+,\R^n)$,  we have that  for almost every $t\in \R_+$ there exists $a_t\in \R$ such that
\[
\inp{v}{\dot\varphi(t)}=a_t,\;\;\text{for all } v\in \partial V(\varphi(t)).
\]
In other words, $\partial V(\varphi(t))$ is a subset of an affine subspace orthogonal to $\dot \varphi(t)$, for almost every $t\in \R_+$.\hfill $\triangle$
\end{defn}

The usefulness of nonpathological functions is mainly given by the following result.
\begin{prop}\cite{valadier1989entrainement}\label{prop:nonpat2}
If $V:\R^n\to \R$ is nonpathological and $\varphi :\R_+\to \R^n$ is an absolutely continuous function, then the set
\[
\{\inp{p}{\dot \varphi(t)}\;\vert\; p\in \partial V(\varphi(t))\},
\]
is equal to the singleton $\{\frac{d}{dt}V(\varphi(t))\}$ for almost every $t\in \R_+$.
\end{prop}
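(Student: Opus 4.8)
The plan is to reduce the statement to a ``nonsmooth chain rule'' inclusion and then exploit the nonpathological hypothesis to collapse an interval to a single point. First I would record the elementary regularity facts: since $\varphi$ is absolutely continuous, on every compact interval $[0,T]$ its image is compact, so $V$ is Lipschitz (with some constant $L$) on a neighbourhood of $\varphi([0,T])$; hence $|V(\varphi(t))-V(\varphi(s))|\le L|\varphi(t)-\varphi(s)|$ and $t\mapsto V(\varphi(t))$ is itself absolutely continuous on $[0,T]$. Consequently $g:=V\circ\varphi$ and $\varphi$ are both differentiable at almost every $t\in\R_+$. Moreover, Definition~\ref{def:nonpat} applied to the given $\varphi$ says precisely that, for almost every $t$, the set $S(t):=\{\inp{p}{\dot\varphi(t)}\mid p\in\partial V(\varphi(t))\}$ is a singleton, say $\{a_t\}$. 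So it only remains to prove that $a_t=\tfrac{d}{dt}V(\varphi(t))$ for almost every $t$, and for this it suffices to show $g'(t)\in S(t)$ for almost every $t$.

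The core step is therefore the claim: at every $t_0$ where $g'(t_0)$ and $v:=\dot\varphi(t_0)$ exist, one has $g'(t_0)\in S(t_0)$. I would prove it by writing $\varphi(t_0+h)=\varphi(t_0)+hv+o(h)$ and using the local Lipschitz bound to discard the remainder, obtaining $\big(g(t_0+h)-g(t_0)\big)/h=\big(V(\varphi(t_0)+hv)-V(\varphi(t_0))\big)/h+o(1)$; letting $h\to0^+$ and then $h\to0^-$ shows that the one-sided directional derivatives $V'(\varphi(t_0);v)$ and $V'(\varphi(t_0);-v)$ exist and equal $g'(t_0)$ and $-g'(t_0)$, respectively. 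Then, recalling that an ordinary one-sided derivative is dominated by the Clarke directional derivative, $V'(x;w)\le V^{\circ}(x;w)=\max_{p\in\partial V(x)}\inp{p}{w}$ (the support-function form of \eqref{eq:limClark}, see \cite{clarke3}), and applying this with $w=v$ and $w=-v$, I get $\min_{p\in\partial V(\varphi(t_0))}\inp{p}{v}\le g'(t_0)\le\max_{p\in\partial V(\varphi(t_0))}\inp{p}{v}$. Since $\partial V(\varphi(t_0))$ is convex, $S(t_0)$ is exactly the closed interval between these two extremes, so $g'(t_0)\in S(t_0)$.

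Intersecting the three full-measure sets (where $g$ is differentiable, where $\varphi$ is differentiable, and where $S(t)$ is a singleton), the two previous steps give, for a.e. $t$, that $S(t)=\{a_t\}$ and $g'(t)\in S(t)$, whence $a_t=g'(t)=\tfrac{d}{dt}V(\varphi(t))$, which is the assertion. The delicate point — the step I expect to cause the most trouble — is the chain-rule claim of the second paragraph: for a merely locally Lipschitz $V$ the one-sided directional derivatives $V'(x;\cdot)$ need not exist at all, so one has to argue carefully that the existence of $g'(t_0)$ together with the Lipschitz reduction \emph{forces} $V'(\varphi(t_0);\pm v)$ to exist precisely along the tangent direction $v=\dot\varphi(t_0)$; the measure-zero bookkeeping (the exceptional null sets depend on $\varphi$, which is harmless since the statement is for a fixed $\varphi$) also has to be handled with a little care.
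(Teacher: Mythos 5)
Your argument is correct. The paper does not prove Proposition~\ref{prop:nonpat2} (it is imported from \cite{valadier1989entrainement}), but your proof is exactly the standard one behind that citation: the a.e.\ chain-rule inclusion $\tfrac{d}{dt}V(\varphi(t))\in\bigl[\min_{p\in\partial V(\varphi(t))}\inp{p}{\dot\varphi(t)},\,\max_{p\in\partial V(\varphi(t))}\inp{p}{\dot\varphi(t)}\bigr]$, obtained from the Lipschitz reduction to the tangent direction and the bound of one-sided derivatives by $V^{\circ}$, combined with the nonpathological hypothesis which collapses that interval to a point. The step you flag as delicate is in fact handled correctly: the existence of $g'(t_0)$ plus the $o(h)$ estimate forces $V'(\varphi(t_0);\pm\dot\varphi(t_0))$ to exist, so no extra regularity of $V$ is needed.
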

\begin{oss}\label{rmk:nonpathologicalprop}
Given a nonpathological function $V:\R^n \to \R$, for any $u\in \cU$, any initial condition $x_0\in \R^n$ and any solution $x:\dom(x(\cdot))\to\R^n$ of \eqref{eq:diffincinp}, we have that
\begin{equation}\label{eq:maininclusion}
\frac{d}{dt}V(x(t))\in \dot{\overline{V}}_{{F}}(x(t),u(t))
\end{equation}
for almost every $t\in \dom(x(\cdot))$. In fact, by Proposition \ref{prop:nonpat2}, for almost every $t\in \dom(x(\cdot))$, we have that 
\[
\begin{aligned}
 \big \{ &\frac{d}{dt}V(x(t)) \big \}=\{\inp{p}{\dot x(t)}\;\vert\;p\in\partial V(x(t))\}\\&\subseteq  \{ a \in \R \,|\, \exists f \in {F}(x(t),u(t)): \inp{p}{f}=a, \forall p \in \partial V(x(t)) \}\\&=\dot{\overline{V}}_{{F}}(x(t),u(t)).
\end{aligned}
\]
\end{oss}
nonpathological functions form a large class of functions which clearly includes $\mathcal{C}^1(\R^n,\R)$, we recall here some important properties of this family of functions, for the proofs we refer to~\cite{valadier1989entrainement} and~\cite{BacCer03}.
\begin{lemma}\label{lemma: PropNonPatFunc}
 The set of nonpathological functions is closed under addition, multiplication by scalars and pointwise maximum operator. More precisely, if $V_1,V_2:\R^n\to \R$ are nonpathological then $V_1+V_2$, $\max\{V_1(x),V_2(x)\}$ and $\lambda V$ ($\lambda\in \R$) are nonpathological.
Moreover, if $V:\R^n \to \R$ is locally Lipschitz continuous and has \emph{at least} one of the following properties:
\begin{itemize}
\item Continuously differentiable, 
\item Clarke-regular  (see \cite[Definition 2.3.4.]{clarke3}), 
\item convex/concave, 
\item semiconvex/semiconcave,
\end{itemize}
then $V$ is nonpathological.
\end{lemma}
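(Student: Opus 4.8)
The plan is to reduce every claim to the pointwise condition of Definition~\ref{def:nonpat}, namely that along an arbitrary $\varphi\in AC(\R_+,\R^n)$ the set $\{\inp{p}{\dot\varphi(t)}\mid p\in\partial V(\varphi(t))\}$ is a singleton for almost every $t$. Two standard facts will be used throughout. First, for locally Lipschitz $V$ and $\varphi\in AC(\R_+,\R^n)$ the composition $V\circ\varphi$ is locally absolutely continuous, hence differentiable a.e., and at every point of differentiability of both $\varphi$ and $V\circ\varphi$ the Clarke chain rule gives $\tfrac{d}{dt}V(\varphi(t))\in\{\inp{p}{\dot\varphi(t)}\mid p\in\partial V(\varphi(t))\}$; thus nonpathologicality is precisely the statement that this containment of a point in a set forces the set to be that single point. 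Second, the Lebesgue density theorem in the form: if $k:\R_+\to\R$ is differentiable at $t$, $k(t)=0$, and $t$ is a density point of $\{k=0\}$, then $k'(t)=0$. Given these, closure under scalar multiplication is immediate from $\partial(\lambda V)(x)=\lambda\,\partial V(x)$, and closure under addition follows from the subadditivity $\partial(V_1+V_2)(x)\subseteq\partial V_1(x)+\partial V_2(x)$: at an a.e.\ $t$ where $\{\inp{p}{\dot\varphi(t)}\mid p\in\partial V_i(\varphi(t))\}=\{a_i(t)\}$ for $i=1,2$, any $p\in\partial(V_1+V_2)(\varphi(t))$ splits as $p_1+p_2$ with $p_i\in\partial V_i(\varphi(t))$, so $\inp{p}{\dot\varphi(t)}=a_1(t)+a_2(t)$ regardless of the splitting.

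The \emph{pointwise maximum} $V=\max\{V_1,V_2\}$ is the main obstacle, and I would treat it by partitioning $\R_+$ via the continuous function $g:=V_1\circ\varphi-V_2\circ\varphi$. On the open set $\{g>0\}$, continuity of $V_1,V_2$ gives $V=V_1$ on an entire ball around $\varphi(t)$, hence $\partial V(\varphi(t))=\partial V_1(\varphi(t))$, so the claim at such $t$ is exactly nonpathologicality of $V_1$ (symmetrically on $\{g<0\}$ using $V_2$). On the closed contact set $E:=\{g=0\}$, the density fact applied to $k=g$ yields $\tfrac{d}{dt}(V_1\circ\varphi)(t)=\tfrac{d}{dt}(V_2\circ\varphi)(t)$ for a.e.\ $t\in E$, and combining this with the chain rule and nonpathologicality of $V_1$ and $V_2$ shows that the singletons $\{a_1(t)\}$ and $\{a_2(t)\}$ coincide at a.e.\ $t\in E$. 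The subgradient formula for maxima at contact points, $\partial V(x)\subseteq\co\bigl(\partial V_1(x)\cup\partial V_2(x)\bigr)$, then lets me write any $p\in\partial V(\varphi(t))$ as $\mu q_1+(1-\mu)q_2$ with $q_i\in\partial V_i(\varphi(t))$ and $\mu\in[0,1]$, so $\inp{p}{\dot\varphi(t)}=\mu a_1(t)+(1-\mu)a_2(t)=a_1(t)$ is independent of $p$. Taking the union of the three null exceptional sets finishes the maximum case; the derivative-matching on $E$, together with invoking the $\co$-formula correctly, is the step I expect to demand the most care.

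For the list of sufficient conditions, the $\mathcal{C}^1$ case is trivial since $\partial V(x)=\{\nabla V(x)\}$. For Clarke-regular $V$, along any $\varphi\in AC(\R_+,\R^n)$ take an a.e.\ $t$ at which $\varphi$ and $V\circ\varphi$ are differentiable; local Lipschitzness yields $\tfrac{d}{dt}V(\varphi(t))=\lim_{h\to0}h^{-1}\bigl(V(\varphi(t)+h\dot\varphi(t))-V(\varphi(t))\bigr)$, whose one-sided limits are $V'(\varphi(t);\dot\varphi(t))$ and $-V'(\varphi(t);-\dot\varphi(t))$. Clarke regularity makes $V'(x;\cdot)$ equal to the support function $\max_{p\in\partial V(x)}\inp{p}{\cdot}$, so the existence of the two-sided limit forces $\max_{p\in\partial V(\varphi(t))}\inp{p}{\dot\varphi(t)}=\min_{p\in\partial V(\varphi(t))}\inp{p}{\dot\varphi(t)}$, i.e.\ a singleton. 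Convex and concave locally Lipschitz functions are Clarke-regular, hence covered. Finally a semiconvex $V$ is of the form $g-\tfrac{c}{2}|\cdot|^2$ with $g$ convex and $c\ge0$, hence a sum of a nonpathological (convex) and a nonpathological ($\mathcal{C}^1$) function and so nonpathological by the closure property already proved; a semiconcave $V$ equals $-1$ times a semiconvex function and is nonpathological by closure under scalar multiplication.
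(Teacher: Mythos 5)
The paper does not actually prove this lemma --- it defers to Valadier and Bacciotti--Ceragioli --- so there is no in-paper argument to compare against; your proposal supplies a self-contained proof, and its architecture is sound. The reductions for $\lambda V$ and $V_1+V_2$ via $\partial(\lambda V)=\lambda\,\partial V$ and $\partial(V_1+V_2)\subseteq\partial V_1+\partial V_2$ are correct. The treatment of the maximum is the right one: off the contact set the Clarke gradient is locally that of the active function, and on the contact set $E=\{V_1\circ\varphi=V_2\circ\varphi\}$ the density-point argument forces $\tfrac{d}{dt}(V_1\circ\varphi)=\tfrac{d}{dt}(V_2\circ\varphi)$ a.e.\ on $E$, which together with Proposition~\ref{prop:nonpat2} identifies the two singletons, $a_1(t)=a_2(t)$; the inclusion $\partial V\subseteq\co\bigl(\partial V_1\cup\partial V_2\bigr)$ (Fact~\ref{fact:2}) then closes the argument, since every element of that hull is $\mu q_1+(1-\mu)q_2$ with $q_i\in\partial V_i$. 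The Clarke-regular case --- comparing the two one-sided limits of the difference quotient along $\varphi(t)+h\dot\varphi(t)$, which are $\max_{p\in\partial V}\inp{p}{\dot\varphi(t)}$ and $\min_{p\in\partial V}\inp{p}{\dot\varphi(t)}$ --- is also correct, as are the semiconvex/semiconcave reductions.

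The one genuine error is the sentence claiming that convex \emph{and concave} locally Lipschitz functions are Clarke-regular: this is false for concave functions. For $V(x)=-|x|$ at $0$ one has $V'(0;1)=-1$ while $V^{\circ}(0;1)=\max_{p\in[-1,1]}p=1$, so $V$ is not regular there. The conclusion you want still holds and is reachable with tools you have already established: $-V$ is convex, hence regular, hence nonpathological, and $V=(-1)\cdot(-V)$ is nonpathological by closure under scalar multiplication --- exactly the device you invoke one line later for semiconcave functions. Replace that sentence accordingly and the proof is complete.
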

 In our paper~\cite{dellarossa19} the class of nonpathological functions obtained from pointwise maximum and minimum combinations over a finite set of continuously differentiable functions has been studied, by proposing sufficient conditions for asymptotic stability of state-dependent switching systems.
 In particular, explicitly exploiting the properties of the max-min structure, we have shown in~\cite{dellarossa19} the advantages of using the Lie derivative concept (Definition~\ref{def:geneder}), providing several examples where the Clarke derivative approach is too conservative. In this work instead, we study ISS of differential inclusions with inputs, considering the broader class of nonpathological candidate Lyapunov functions, and investigating how the nonpathological property could be exploited in the context of interconnected differential inclusions.
In Section \ref{sec:statedep}  we will define a family of locally Lipschitz functions and we will prove that it is a subset of the nonpathological functions. A similar class is considered also in our parallel work~\cite{dellarossa20}, in the context of hybrid systems composed by \emph{continuous} differential inclusions over restricted domains.

\subsection{ISS-Lyapunov Result}
 We now provide sufficient conditions for ISS of system \eqref{eq:diffincinp}. In what follows, due to the fact that the set $\dot{\overline{V}}_{{F}}(x,u)$ is possibly empty, we adopt the convention $\max \emptyset =-\infty$.
 The novelty of the following ISS-Lyapunov result lies in the fact that we require \emph{Lie generalized derivative} of the Lyapunov function to be negative definite. Recalling the inclusion~\eqref{eq:maininclusion}, this statement can be seen as a generalization of the existing results on ISS of differential inclusions relying on the notion of Clarke derivative, in particular~\cite{LibNes14}.
\begin{thm}\label{teo:mainres}
Let $V :\R^n \to \R$ be a locally Lipschitz and nonpathological function such that there exist $\underline{\alpha}, \overline{\alpha} \in \mathcal{K}_{\infty}$, $\rho\in\cPD$ and
$\gamma \in \mathcal{K}$  such that
\begin{align}
&\underline{\alpha}(|x|) \leq V(x) \leq \overline{\alpha}(|x|), \label{eq:cond1}\\
& V(x)> \gamma(|u|)\;\;\Rightarrow\;\;\max \dot{\overline{V}}_{F}(x,u) \leq -\rho(|x|), \label{eq:cond2}
\end{align}
then system \eqref{eq:diffincinp} is ISS w.r.t. $u$, and  $V$ is called a \emph{nonpathological ISS-Lyapunov function} for  system \eqref{eq:diffincinp}.
\end{thm}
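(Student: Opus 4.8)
The plan is to follow the classical ISS-via-Lyapunov argument (as in \cite{sontag89}, \cite{Lin1996}, \cite{LibNes14}), but replacing the usual chain-rule estimate on $\frac{d}{dt}V(x(t))$ with the inclusion \eqref{eq:maininclusion} that is available for nonpathological $V$ by Remark~\ref{rmk:nonpathologicalprop}. Fix an input $u\in\cU$, an initial condition $x_0$, and a solution $x(\cdot)$ of \eqref{eq:diffincinp}; write $\|u\|_t:=\esssup_{0\le\tau\le t}|u(\tau)|$. First I would combine Proposition~\ref{prop:nonpat2} and \eqref{eq:cond2}: for almost every $t$ in the domain of $x(\cdot)$, $\frac{d}{dt}V(x(t))\in\dot{\overline V}_F(x(t),u(t))$, hence whenever $V(x(t))>\gamma(|u(t)|)$ we get $\frac{d}{dt}V(x(t))\le\max\dot{\overline V}_F(x(t),u(t))\le-\rho(|x(t)|)<0$. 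So along any solution $V$ is nonincreasing on the set of times where it exceeds the threshold $\gamma(\|u\|_t)$ (using $\gamma(|u(t)|)\le\gamma(\|u\|_t)$ by monotonicity of $\gamma$).

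Next I would establish \emph{completeness} and an \emph{asymptotic gain / boundedness} bound. The threshold argument shows $V(x(t))\le\max\{V(x_0),\gamma(\|u\|_t)\}$ for all $t$ in the maximal interval of existence: indeed, if $V(x_0)\le\gamma(\|u\|_t)$ the function $V\circ x$ can never strictly exceed $\gamma(\|u\|_t)$ (at any time it tries to, its derivative is negative), and if $V(x_0)>\gamma(\|u\|_t)$ then $V\circ x$ decreases until it reaches that level and stays below thereafter — a standard comparison/contradiction argument using absolute continuity of $V\circ x$ (which holds since $V$ is locally Lipschitz and $x$ absolutely continuous). Via \eqref{eq:cond1} this yields $|x(t)|\le\underline\alpha^{-1}(\max\{\overline\alpha(|x_0|),\gamma(\|u\|_t)\})\le\underline\alpha^{-1}(\overline\alpha(|x_0|))+\underline\alpha^{-1}(\gamma(\|u\|_t))$, so $x$ stays in a compact set; by the local existence result (Proposition~\ref{prop:Existence}) and local boundedness of $F$, the solution can be extended to $[0,\infty)$, i.e. $x$ is complete. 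This also produces the class-$\cK$ term $\chi:=\underline\alpha^{-1}\circ\gamma$ in \eqref{eq:issbounds} (up to the usual doubling of class-$\cK$ bounds).

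It remains to get the $\mathcal{KL}$ decay term $\beta(|x_0|,t)$. The plan is the standard two-step argument: on the time set where $V(x(t))\ge\gamma(\|u\|_t)$, $V$ satisfies the differential inequality $\frac{d}{dt}V(x(t))\le-\rho(|x(t)|)\le-\rho(\overline\alpha^{-1}(V(x(t))))=:-\widehat\rho(V(x(t)))$ with $\widehat\rho\in\cPD$, so by a comparison lemma (e.g. \cite[Lemma 4.4]{khalil2002nonlinear} / \cite{Lin1996}) $V(x(t))\le\widehat\beta(V(x_0),t)$ for some $\mathcal{KL}$ function $\widehat\beta$, until $V(x(t))$ drops below $\gamma(\|u\|_t)$; once below, the previous step keeps it there. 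Combining the two regimes and translating back through \eqref{eq:cond1} gives $|x(t)|\le\beta(|x_0|,t)+\chi(\|u\|_t)$ with $\beta(r,t):=\underline\alpha^{-1}(\widehat\beta(\overline\alpha(r),t))$, which is the desired bound \eqref{eq:issbounds}.

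The main obstacle is not any single estimate but the careful measure-theoretic bookkeeping: \eqref{eq:cond2} and the derivative inclusion hold only for \emph{almost every} $t$, so I must argue that $V\circ x$, which is absolutely continuous, actually inherits the pointwise monotonicity/comparison conclusions from an a.e.\ differential inequality — this is where local Lipschitzness of $V$ together with absolute continuity of $x$ is essential, and where one invokes that an absolutely continuous function with a.e.\ nonpositive derivative on a set is nonincoreasing there. A secondary subtlety is the implicit sign/definiteness issue when $x(t)=0$ but $u(t)\ne 0$: one should check the threshold condition \eqref{eq:cond2} is vacuous or consistent there, and handle the convention $\max\emptyset=-\infty$ cleanly (an empty Lie derivative only helps, since then $\frac{d}{dt}V(x(t))$ cannot equal any real number, contradicting \eqref{eq:maininclusion} unless that time is in the a.e.-null exceptional set). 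I would isolate these points in a short lemma on absolutely continuous functions satisfying a.e.\ differential inequalities, and otherwise the proof is a routine adaptation of the smooth ISS-Lyapunov theorem.
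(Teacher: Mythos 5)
Your proposal is correct and follows essentially the same route as the paper: both reduce to the classical smooth ISS argument by using the nonpathological property (Proposition~\ref{prop:nonpat2} / Remark~\ref{rmk:nonpathologicalprop}) to conclude $\frac{d}{dt}V(x(t))\le-\rho(|x(t)|)$ a.e.\ on the set where $V(x(t))>\gamma(|u(t)|)$, and then invoke the standard threshold/comparison machinery. The only difference is that the paper delegates that standard part to the cited references, whereas you sketch it explicitly (including the correct handling of the $\max\emptyset=-\infty$ convention and the a.e.\ differential-inequality bookkeeping).
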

\begin{proof}
Let us consider any initial condition $x_0\in \R^n$ and any input $u\in \cU$. Let $x:\dom(x(\cdot)) \to \R^n$ be a solution of system \eqref{eq:diffincinp} starting at $x_0$ and with input $u$. 
Let us note that the function $V \circ x : \dom(x(\cdot)) \to \R$ is absolutely continuous because it is the composition of a locally Lipschitz continuous function and an absolutely continuous function. Then $\frac{d}{dt}V(x(t))$ exists almost everywhere.
Since $V$ is nonpathological, recalling \eqref{eq:maininclusion} in Remark~\ref{rmk:nonpathologicalprop}, we have that $\frac{d}{dt}V(x(t))\in\dot{\overline{V}}_{{F}}(x(t), u(t))$ for almost every $t \in \dom(x(\cdot))$.
From equation~\eqref{eq:cond2},  we have that
\begin{equation*}
\frac{d}{dt}V(x(t)) \leq -\rho(|x(t)|),
\end{equation*}
for almost every $t\in\dom(x(\cdot))$ such that $V(x(t))>\gamma(|u(t)|)$.
The proof is completed by  following standard approaches as those in~\cite{Sontag1995},~\cite[Theorem 4.18]{khalil2002nonlinear}. For the interested reader, the argument is fully developed in~\cite[Proof of Theorem 5.4]{DellaROPhd20}.
\end{proof}
\begin{oss}\label{rmk:equiv}
As already observed in the literature, for example in \cite{cai2009characterizations}, under some assumptions the existence of $\rho\in \cPD$ and $\gamma \in \mathcal{K}$ such that condition \eqref{eq:cond2} holds is equivalent to the existence of two functions $\widehat \rho\in \mathcal{K}_{\infty}$ and $\widehat \gamma\in \cK$ such that 
\begin{equation}\label{eq:cond2eqival}
\max \dot{\overline{V}}_{{F}}(x,u) \leq -\widehat \rho(|x|)+\widehat\gamma(|u|) \;\; \forall (x,u)\in \R^n \times \R^m.
\end{equation}
Indeed, implication \eqref{eq:cond2eqival} $\Rightarrow$ \eqref{eq:cond2} holds by choosing $\rho:=\frac{1}{2}\widehat \rho$ and $\gamma=\widehat \rho^{-1}\circ 2\widehat \gamma$. 
The converse implication \eqref{eq:cond2} $\Rightarrow$ \eqref{eq:cond2eqival}  holds if $F(0,0)=\{0\}$ and $\rho\in \cK_\infty$. Indeed, it suffices to choose $\widehat \rho:=\rho$ and $\widehat \gamma\in \cK$ such that $\widehat \gamma(r) \geq \max\{0, \widehat \gamma_0(r)\}$, for all $r\in \R_+$, where
\[
\widehat \gamma_0(r):=\max\left\{\max \dot{\overline{V}}_{{F}}(x,u)+\rho(|x|) \,\vert\,|x|\leq\gamma(r),\;|u| \leq r \right\}.
\]
It is possible to show that $\widehat \gamma_0$ above is well defined and $\widehat \gamma_0\in\cK$, using local boundedness of $\partial V$ and regularity of $F:\R^n\times \R^m\rightrightarrows \R^n$.
Moreover, using~\eqref{eq:cond1}, another equivalent formulation of condition \eqref{eq:cond2eqival} corresponds to asking that there exist two functions $\wt \rho \in \mathcal{K}_{\infty}$ and $\wt \gamma \in \cK$ such that 
\begin{equation}\label{eq:cond2equival2}
\max \dot{\overline{V}}_{{F}}(x,u) \leq -\wt \rho(V(x))+\wt\gamma(|u|), \;\; \forall (x,u)\in \R^n \times \R^m.
\end{equation}
 The advantage of \eqref{eq:cond2equival2} is that in this formulation the function $\wt \rho$ represents the decay rate of $V$ along the solutions.\hfill$\triangle$
\end{oss}

\section{ISS for State Dependent Switched Systems}\label{sec:statedep}
In this section, we apply our ISS result to a specific differential inclusion with inputs arising from a suitable regularization of state-dependent switched systems.
We introduce the concept of a \virgolette{well-behaved} partition of the state space, of the associated switched system, and of a family of functions related to this partition, and finally we provide the specialization of Theorem \ref{teo:mainres} in this setting.
\begin{defn}[Proper State-Space Partition]\label{def:Ncov}
Given a finite set of indexes $\cI:=\{1,\dots K\}$, let us consider closed sets $X_1,\dots,X_K\subseteq \R^n$ and open sets $\cO_1,\dots \cO_K \subseteq \R^n$ such that
\begin{enumerate}
\item[a)] $\bigcup_{i=1}^KX_i=\R^n$,
\item[b)] $X_i\subseteq \cO_i$, for all $i\in \cI$,
\item[c)] $\overline{\inn(X_i)}=X_i$, for all $i\in \cI$,
\item[d)] For every $i\in \cI$, $\bd(X_i)$ has zero Lebesgue measure,
\item[e)] $X_i\cap X_j=\bd(X_i)\cap \bd(X_j)$, for all $i,j\in \cI$, $i\neq j$,
\end{enumerate}
In this situation, we say that $\cX:=\{X_i,\cO_i\}_{i\in \cI}$ is a \emph{proper partition} of $\R^n$. We define $\partial X:=\cup_{i\in \cI}\bd(X_i)$, and we underline that $\partial X$ has zero Lebesgue measure.\hfill$\triangle$
\end{defn}
Given a proper partition $\cX$ of $\R^n$, we can introduce an \virgolette{index indicator map}, that is a set valued map $\cI_\cX:\R^n \rightrightarrows \cI$ defined as
\begin{equation}\label{eq:funcI}
\cI_{\mathcal{X}}(x):=\{i\in \cI \;\vert\; x\in X_i\}.
\end{equation}
We underline that $\cI_{\mathcal{\cX}}$ is almost everywhere single valued. In fact, by Definition~\ref{def:Ncov}, Item e), if $x\in \inn(X_\ell)$ for some $\ell\in \cI$ then $\cI_\cX(x)=\{\ell\}$.
\begin{defn}[State-Dependent Switched System]\label{def:StateDep}
Given $\cX=\{X_i,\cO_i\}_{i\in \cI}$ a proper partition of $\R^n$, consider $f_i\in \cC^1(\cO_i\times \R^m,\R^n)$, $i\in\cI$. A \emph{state-dependent switching signal} associated to $\cX$ is a function $\sigma:\R^n\to \cI$ such that
\begin{equation}\label{eq:switchingsignal}
\sigma(x)= i, \;\;\;\text{ if }x\in \inn(X_i),\\
\end{equation}
and the (perturbed) \emph{state-dependent switched system} associated to $\{X_i,\cO_i,f_i\}_{i\in \cI}$ is the differential equation
\begin{equation}\label{eq:switchingdisc}
\dot x=f_{\sigma(x)}(x,u).
\end{equation}
\end{defn}
We note here that, given a proper partition $\cX=\{X_i,\cO_i\}$, a state dependent switching signal associated to it is not uniquely defined: the value of $\sigma$ remains unspecified on the null-measure set $\partial \cX$. We now clarify why this ambiguity does not affect the solution set of the corresponding state-dependent switched system.

System \eqref{eq:switchingdisc} has a discontinuous right-hand side in the first argument thus it may not have any Carathéodory solutions at the discontinuity points of $f_{\sigma(\cdot)}(\cdot, u)$, see~\cite{cortes}. Many possible definitions of ``generalized solutions'' for discontinuous dynamical system are possible (see for example \cite{ceragioli} or \cite{cortes}); we consider the concept of \emph{Filippov solutions}, introduced firstly in \cite{filippov1988differential}. More formally, we define $\text{Fil}$, the Filippov regularization of the discontinuous map $f_{\sigma}$, as
\begin{equation*}
\begin{aligned}
\text{Fil}(f_{\sigma})(x,u)&:=\bigcap_{\delta>0}\bigcap_{\mu(S)=0}\overline{\co}\{f_{\sigma(y)}(y,u) \; \vert\; y\in \B(x,\delta)\setminus S\}\\
&=:F^\sw(x,u),
\end{aligned}
\end{equation*}
where $\mu$ denotes the Lebesgue measure.
Under the hypotheses in Definitions~\ref{def:Ncov} and~\ref{def:StateDep}, it can be proven that
\[
F^\sw(x,u)=\co\{f_i(x,u)\;\vert\;i\in \cI_\cX(x)\},
\]
see for example~\cite{filippov1988differential},~\cite{cortes} and~\cite{HeemWei08}.
Summarizing, we consider the regularized differential inclusion
\begin{equation}\label{eq:swsystem}
\dot x\in F^\sw(x,u)=\co\{f_i(x,u)\;\vert\;i\in \cI_\cX(x)\}
\end{equation}
considering again signals $u\in \cU$.
 Since it is easily verified that $f_{\sigma(x)}(x,u)\in F^\sw(x,u)$ for all $(x,u)\in \R^n\times \R^m$, we have that any solution of~\eqref{eq:switchingdisc} is also a solution of~\eqref{eq:swsystem}. Moreover, it can be seen that $F^\sw:\R^n\times \R^m\rightrightarrows \R^n$ satisfies Assumption~\ref{Assump:Main}, and thus by Proposition~\ref{prop:Existence} we have existence of solutions of~\eqref{eq:swsystem} from any initial condition and any input $u\in \cU$ (which was not the case for~\eqref{eq:switchingdisc}, see~\cite{cortes}). Then, characterizing desirable (stability/convergence) properties of the solutions of~\eqref{eq:swsystem} indirectly also characterizes the solutions of~\eqref{eq:switchingdisc}. In particular,
 a \emph{Filippov solution} of system \eqref{eq:switchingdisc} is by definition a solution of the differential inclusion \eqref{eq:swsystem}, according to the definition given in Section~\ref{sec:prelim}.

We introduce here a family of locally Lipschitz functions that we propose as candidate Lyapunov functions for~\eqref{eq:swsystem}. 
\begin{defn}[Piecewise $\cC^1$ Functions]\label{defn:piecewise}
Consider $\cJ=\{1,\dots,N\}$ and $\cY=\{Y_j,\cV_j\}_{j\in \cJ}$, a proper partition of $\R^n$. A continuous function $V:\R^n\to \R$ is called a \emph{piecewise $\cC^1$ function} with respect to the proper partition $\cY$ (and we write $V\in \mathscr{P}(\cY)$) if there exist real-valued functions $V_1,\dots V_N$ such that
\begin{enumerate}
\item $V_j\in \cC^1(\cV_j,\R)$ for each $j \in \cJ$,
\item $V(x)=V_j(x)$, if $x \in Y_j$.\hfill $\triangle$
\end{enumerate}
\end{defn}
Piecewise $\cC^1$ functions with respect to a proper partition are a particular kind of \emph{``piecewise $\cC^1$ functions''} defined in \cite{Sch12}.
\begin{prop}\label{lemma:propertiesofppf}
Consider $V\in \mathscr{P}(\cY)$, with respect to a proper partition $\cY=\{Y_j,\cV_j\}_{j\in \cJ}$, in the sense of Definition~\ref{defn:piecewise}. Then the following hold:
\begin{enumerate}[leftmargin=*] 
	\item $V$ is locally Lipschitz, nonpathological and 
\begin{equation}\label{eq:gengradPiece}
\partial V(x)=\co\left\{\nabla V_j(x)\;\vert\;j\in \cI_\cY(x)\right\}.
\end{equation}
\item Given $F^\sw:\R^n\times \R^m \rightrightarrows \R^n$ defined as in \eqref{eq:swsystem}, we have 
\begin{equation}\label{eq:PPFLieDerivative}
\dot{\overline{V}}_{F^\sw}(x,u)=\left \{ a \in \R \;\Bigl \vert
\begin{aligned} \;& \exists f \in F^\sw(x,u): \\&\inp{\nabla V_j(x)}{f}=a, \, \forall j\in \cI_\cY(x) \end{aligned} \right \}.
\end{equation}
\end{enumerate}
\end{prop}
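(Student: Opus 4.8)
The plan is to prove the two items of Proposition~\ref{lemma:propertiesofppf} in sequence, with Item~1 being the substantive part and Item~2 following almost immediately from it.

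\medskip

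\textbf{Item 1: local Lipschitzness, the gradient formula, and nonpathologicality.} First I would fix a point $x\in\R^n$ and argue locally. Let $\ell\in\cI_\cY(x)$, so $x\in Y_\ell\subseteq\cV_\ell$ with $\cV_\ell$ open. Since $V=V_j$ on $Y_j$ and each $V_j\in\cC^1(\cV_j,\R)$, on a small enough neighborhood $U$ of $x$ we have $U\subseteq\bigcup_{j\in\cI_\cY(x)}\cV_j$ (using openness of the $\cV_j$ and the fact that, by properness of $\cY$, only finitely many $Y_j$ meet any compact set, indeed only those indexed by $\cI_\cY(x)$ near $x$), and on $U$ the function $V$ agrees pointwise with one of the finitely many $\cC^1$ functions $V_j$, $j\in\cI_\cY(x)$. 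Each such $V_j$ is Lipschitz on a compact neighborhood of $x$, hence $V$, being a pointwise selection among finitely many locally Lipschitz functions, is locally Lipschitz at $x$. For the gradient formula~\eqref{eq:gengradPiece}: the inclusion $\subseteq$ comes from the limit characterization~\eqref{eq:limClark}: at a differentiability point $y\notin\cN_V$ near $x$, if $y\in\inn(Y_j)$ then $\nabla V(y)=\nabla V_j(y)$; taking limits $y\to x$ over such points with $y\in\inn(X_j)$ forces $j\in\cI_\cY(x)$ by closedness of $Y_j$, and $\nabla V_j(y)\to\nabla V_j(x)$ by $\cC^1$-continuity, so every generating limit lies in $\{\nabla V_j(x):j\in\cI_\cY(x)\}$, and taking convex hulls gives $\subseteq$. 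The reverse inclusion $\supseteq$ needs that for each $j\in\cI_\cY(x)$ the vector $\nabla V_j(x)$ is actually a limit of gradients of $V$ at differentiability points; this follows because $\overline{\inn(Y_j)}=Y_j$ (Definition~\ref{def:Ncov}c), so there is a sequence $y_k\to x$ with $y_k\in\inn(Y_j)$, and one still needs $y_k\notin\cN_V$, which holds for a.e.\ such $y_k$ since $\cN_V$ and $\partial\cY$ have measure zero; then $\nabla V(y_k)=\nabla V_j(y_k)\to\nabla V_j(x)$. Finally, nonpathologicality is \emph{not} obtained from Lemma~\ref{lemma: PropNonPatFunc} directly (a pointwise selection need not be Clarke-regular or semiconvex); instead I would verify Definition~\ref{def:nonpat} by hand: given $\varphi\in AC(\R_+,\R^n)$, for a.e.\ $t$ the derivative $\dot\varphi(t)$ exists, and I claim that for a.e.\ $t$ either $\varphi(t)\in\inn(Y_j)$ for the unique active $j$ (then $\partial V(\varphi(t))=\{\nabla V_j(\varphi(t))\}$, trivially affine orthogonal), or $\varphi(t)\in\partial\cY$. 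The key sublemma — which is the natural candidate for the ``main obstacle'' — is that the set of times $t$ with $\varphi(t)\in\partial\cY$ but $\dot\varphi(t)\neq 0$ has measure zero; more precisely, one shows that on the set $\{t:\varphi(t)\in Y_i\cap Y_j\}$, for a.e.\ such $t$ one has $\inp{\nabla V_i(\varphi(t))}{\dot\varphi(t)}=\inp{\nabla V_j(\varphi(t))}{\dot\varphi(t)}$ because $t\mapsto V_i(\varphi(t))=V(\varphi(t))=V_j(\varphi(t))$ agree as $AC$ functions on that set and hence have equal derivatives a.e.\ there (using that a.e.\ point of a measurable set is a point of density and the chain rule for $AC\circ\cC^1$). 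Extending this pairwise equality over the finitely many active indices shows $\partial V(\varphi(t))=\co\{\nabla V_j(\varphi(t)):j\in\cI_\cY(\varphi(t))\}$ lies in an affine subspace orthogonal to $\dot\varphi(t)$, which is exactly nonpathologicality.

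\medskip

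\textbf{Item 2: the Lie derivative formula~\eqref{eq:PPFLieDerivative}.} This is now immediate. By Definition~\ref{def:geneder}, $\dot{\overline{V}}_{F^\sw}(x,u)=\{a\in\R:\exists f\in F^\sw(x,u),\ \inp{p}{f}=a\ \forall p\in\partial V(x)\}$. Substitute~\eqref{eq:gengradPiece}: the condition ``$\inp{p}{f}=a$ for all $p\in\partial V(x)$'' is, by linearity of the inner product and the fact that a linear functional is constant on a convex hull iff it is constant on the generating set, equivalent to ``$\inp{\nabla V_j(x)}{f}=a$ for all $j\in\cI_\cY(x)$''. This yields exactly~\eqref{eq:PPFLieDerivative}.

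\medskip

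\textbf{Anticipated difficulty.} The routine parts (local Lipschitz, the $\subseteq$ half of the gradient formula, Item~2) are quick. The delicate points are: (i) the $\supseteq$ half of~\eqref{eq:gengradPiece}, where one must combine $\overline{\inn(Y_j)}=Y_j$ with the measure-zero nature of $\partial\cY\cup\cN_V$ to reach differentiability points inside $\inn(Y_j)$; and (ii) the direct verification of nonpathologicality, which hinges on the density-point / chain-rule argument showing that $t\mapsto V_i(\varphi(t))$ and $t\mapsto V_j(\varphi(t))$ have a.e.\ equal derivatives on the level set where $\varphi$ stays in $Y_i\cap Y_j$. I expect (ii) to be where most of the care is needed; it may be cleanest to isolate it as a short lemma (or to invoke an analogous statement from~\cite{dellarossa19} or~\cite{dellarossa20}, where max--min combinations of $\cC^1$ functions are treated), since the piecewise-$\cC^1$-over-a-proper-partition structure is close to, but formally more general than, a finite max--min.
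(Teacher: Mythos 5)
Your proposal is correct, and the interesting divergence from the paper is in the nonpathologicality argument. The paper proves it by contraposition: at a time $t$ where two active inner products $\inp{\nabla V_{\ell_1}(\varphi(t))}{\dot\varphi(t)}\neq\inp{\nabla V_{\ell_2}(\varphi(t))}{\dot\varphi(t)}$ disagree, the derivative of $V_{\ell_1}\circ\varphi-V_{\ell_2}\circ\varphi$ is nonzero, so $V_{\ell_1}(\varphi(\tilde t))\neq V_{\ell_2}(\varphi(\tilde t))$ for nearby $\tilde t\neq t$ and both indices cannot remain active; hence every ``disagreement'' time is isolated, and a set of isolated points is countable, hence null. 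Your route is the dual, direct one: on $E_{ij}=\{t:\varphi(t)\in Y_i\cap Y_j\}$ the two compositions $V_i\circ\varphi$ and $V_j\circ\varphi$ coincide, so at a.e.\ point of $E_{ij}$ (a density, hence accumulation, point where both are differentiable) their derivatives agree; taking the finite union of exceptional null sets over pairs gives the conclusion. Both are sound; the paper's version avoids invoking density points at the cost of the ``isolated points are countable'' step, while yours is arguably more standard measure theory and generalizes more readily. Two small points of care in your Item~1: the bare claim that a pointwise selection among finitely many locally Lipschitz functions is locally Lipschitz is false without the continuity of $V$ built into Definition~\ref{defn:piecewise} (the paper simply cites Scholtes for this); and in the $\subseteq$ half of~\eqref{eq:gengradPiece} the generating sequences $x_k\notin\cN_V$ in~\eqref{eq:limClark} may sit on $\partial Y$, so you cannot assume $x_k\in\inn(Y_j)$ — either invoke the version of Clarke's characterization that permits discarding an additional null set, or do what the paper does: extract a subsequence confined to a single $Y_\ell$ and approximate each $x_k$ from $\inn(Y_\ell)$ to identify $\nabla V(x_k)$ with $\nabla V_\ell(x_k)$. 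Your Item~2 is exactly the paper's (one-line consequence of~\eqref{eq:gengradPiece} and linearity on convex hulls).
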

We postpone the proof of Proposition~\ref{lemma:propertiesofppf} to the Appendix.
We can now specialize the results stated in previous sections in this setting. First, we consider a candidate Lyapunov function in the class of \emph{piecewise $\cC^1$ functions} $\cP(\cY)$, where the proper partition $\cY$ does not necessarily coincide with $\cX$, the proper partition associated with the considered switched system. Then we present specifically the case $\cX=\cY$, see the subsequent Remark~\ref{rmk:ComparisonCorollaries} for further discussion.
\begin{cor}[ISS for state dependent switching]\label{cor:CoroSw1}
Consider a proper partition $\mathcal{X}=\{X_i, \cO_i\}_{i\in \cI}$ and an associated switched system \eqref{eq:swsystem}. Let us consider another proper partition $\cY=\{Y_j,\cV_j\}_{j\in \cJ}$ and let  $V\in \mathscr{P}(\cY)$. Suppose that there exist $\underline{\alpha}, \overline{\alpha}  \in \mathcal{K}_{\infty}$, $\rho\in \cPD$ and $\gamma \in \mathcal{K}$  such that:
\begin{enumerate}[leftmargin=*,label=\textbf{\Alph*)}]
\item for each $j\in \cJ$, for each $x\in Y_j$,
\[
\underline{\alpha}(|x|) \leq V_j(x) \leq \overline{\alpha}(|x|);
\]
\item for each $j\in \cJ$, for each $x\in \inn(Y_j)\setminus \partial X$ and for each $u\in \R^m$,
\[
  V(x)> \gamma(|u|)\;\;\Rightarrow\;\;\inp{\nabla V_j(x)}{f_{\sigma(x)}(x,u)}\leq -\rho(|x|);\label{eq:LiecondswitchingBAsic1}
 \]
 \item for each $(x,u)\in \partial X \times \R^m$,
 \[ 
V(x)> \gamma(|u|)\;\;\Rightarrow\;\;\max \dot{\overline{V}}_{F^\sw}(x,u) \leq - \rho(|x|) ;\label{eq:LiecondswitchingBAsic2}
\]
\end{enumerate}
then system \eqref{eq:swsystem} is ISS.
\end{cor}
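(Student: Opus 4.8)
The plan is to deduce the statement from Theorem~\ref{teo:mainres} applied to the differential inclusion~\eqref{eq:swsystem}, which satisfies Assumption~\ref{Assump:Main} (and $F^\sw(0,0)=\{0\}$) as noted after~\eqref{eq:swsystem}. By Proposition~\ref{lemma:propertiesofppf}, any $V\in\mathscr{P}(\cY)$ is locally Lipschitz and nonpathological, with $\partial V(x)=\co\{\nabla V_j(x)\mid j\in\cI_\cY(x)\}$. The growth bound~\eqref{eq:cond1} is immediate from~A): for arbitrary $x\in\R^n$ pick $j\in\cI_\cY(x)$, so that $x\in Y_j$ and $V(x)=V_j(x)$, whence $\underline{\alpha}(|x|)\le V(x)\le\overline{\alpha}(|x|)$. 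It thus remains to verify~\eqref{eq:cond2}, i.e. that $\max\dot{\overline{V}}_{F^\sw}(x,u)\le-\rho(|x|)$ whenever $V(x)>\gamma(|u|)$.

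Fix such a pair $(x,u)$. If $x\in\partial X$, then~\eqref{eq:cond2} is precisely hypothesis~C), so suppose $x\notin\partial X$. Using Definition~\ref{def:Ncov} (Items a) and e)), a point outside $\partial X$ lies in $\inn(X_\ell)$ for a unique $\ell\in\cI$; hence $\cI_\cX(x)=\{\ell\}$, $\sigma(x)=\ell$, and $F^\sw(x,u)=\co\{f_\ell(x,u)\}=\{f_{\sigma(x)}(x,u)\}$. Consequently $\dot{V}_{F^\sw}(x,u)=\{\inp{p}{f_{\sigma(x)}(x,u)}\mid p\in\partial V(x)\}$, and by~\eqref{eq:setdiffinclusion} together with the convention $\max\emptyset=-\infty$ it suffices to show $\inp{p}{f_{\sigma(x)}(x,u)}\le-\rho(|x|)$ for every $p\in\partial V(x)$. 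Since $\partial V(x)$ is the convex hull of the vectors $\nabla V_j(x)$, $j\in\cI_\cY(x)$, and $p\mapsto\inp{p}{f_{\sigma(x)}(x,u)}$ is linear, it is enough to prove
\[
\inp{\nabla V_j(x)}{f_{\sigma(x)}(x,u)}\le-\rho(|x|)\qquad\text{for each }j\in\cI_\cY(x).
\]

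Fix $j\in\cI_\cY(x)$. If $x\in\inn(Y_j)$, then $x\in\inn(Y_j)\setminus\partial X$ and the bound follows at once from~B). The remaining case --- and, I expect, the main obstacle --- is $x\in\bd(Y_j)$ (so that $V$ is non-differentiable at $x$) while still $x\notin\partial X$ (so that the switched vector field is single-valued and smooth near $x$): here~B) is not directly applicable, being postulated only on $\inn(Y_j)\setminus\partial X$. I would resolve this by a density-and-continuity argument. Because $\overline{\inn(Y_j)}=Y_j\ni x$, choose $x_m\to x$ with $x_m\in\inn(Y_j)$; since $\inn(X_\ell)\subseteq\R^n\setminus\partial X$ is open and contains $x$, one has $x_m\in\inn(X_\ell)$ for all large $m$, hence $x_m\in\inn(Y_j)\setminus\partial X$ and $\sigma(x_m)=\ell$. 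By continuity of $V$, $V(x_m)>\gamma(|u|)$ for all large $m$, so~B) gives $\inp{\nabla V_j(x_m)}{f_\ell(x_m,u)}\le-\rho(|x_m|)$. Letting $m\to\infty$ and using continuity of $\nabla V_j$ on $\cV_j\supseteq Y_j$, of $f_\ell(\cdot,u)$ on $\cO_\ell\supseteq X_\ell$, and of $\rho$, yields $\inp{\nabla V_j(x)}{f_{\sigma(x)}(x,u)}\le-\rho(|x|)$ (all quantities being well defined since $x_m,x\in Y_j\subseteq\cV_j$ and $x_m,x\in X_\ell\subseteq\cO_\ell$). Taking convex combinations over $j\in\cI_\cY(x)$ completes the verification of~\eqref{eq:cond2} for $x\notin\partial X$; together with~\eqref{eq:cond1}, Theorem~\ref{teo:mainres} then delivers ISS of~\eqref{eq:swsystem}.
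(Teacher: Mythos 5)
Your proposal is correct and follows essentially the same route as the paper: apply Theorem~\ref{teo:mainres} via Proposition~\ref{lemma:propertiesofppf}, dispatch $\partial X$ with hypothesis~C) and interior points of the $Y_j$ with~B), and treat the remaining points of $\partial Y\setminus\partial X$ by exactly the same density-and-continuity argument (a sequence in $\inn(Y_j)\cap\inn(X_\ell)$ converging to $x$, then passing to the limit in~B)). The only cosmetic difference is that you organize the case split as $\partial X$ versus its complement and pass through the Clarke derivative and~\eqref{eq:setdiffinclusion}, whereas the paper splits $\R^n$ into three pieces up front; the content is the same.
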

\begin{proof}
By the nonpathological property  of $V$ established in Proposition~\ref{lemma:propertiesofppf}, we can apply Theorem \ref{teo:mainres}. Getting inequality~\eqref{eq:cond1} from \textbf{A)} is straightforward. We check inequality~\eqref{eq:cond2} decomposing $\R^n$  as follows:
\[
\R^n=\Big (\bigcup_{j\in \cJ}\inn(Y_j)\setminus\partial X \Big )\cup\partial X\cup \big (\partial Y\setminus\partial X \big ).
\]
Consider first a point  $x\in\inn(Y_j)\setminus \partial X$ for some $j\in \cJ$. Function $V\in \mathscr{P}(\cY)$ is $\cC^1$ at $x$, and $F^\sw(x,u)=\{f_{\sigma(x)}(x,u)\}$ is single-valued. Thus, for any $u\in \R^m$, we have 
\[
\dot{\overline{V}}_{F^\sw}(x,u)=\{\inp{\nabla V_j(x)}{f_{\sigma(x)}(x,u)}\},
\]
and by \ref{eq:LiecondswitchingBAsic1}, the implication in~\eqref{eq:cond2} holds.\\
If $x\in \partial X$, the assertion follows directly by  \ref{eq:LiecondswitchingBAsic2}.\\
As the last step, consider a point $x\in \partial Y\setminus \partial X$, and thus $x\in \inn(X_i)$ for some $i\in \cI$. In particular at $x$, we have $F^\sw(x,u)=\{f_i(x,u)\}$. Consider $u\in \R^m$ and suppose that $V(x)>\gamma(|u|)$. Recalling~\eqref{eq:gengradPiece} in Lemma~\ref{lemma:propertiesofppf} and by Definition~\ref{def:Ncov}, for each $j\in \cI_\cY(x)$, there exists a sequence $x_k^j\to x$ such that $x_k^j\in \inn(Y_j)\cap \inn(X_i)$ for all $k\in \N$. By continuity of $V$ and $\gamma$, we can suppose $V(x^j_k)>\gamma(|u|)$. At these points, from~\ref{eq:LiecondswitchingBAsic1}, we get $\inp{\nabla V_j(x^j_k)}{f_{i}(x^j_k,u)}\leq -\rho(|x_j^k|)$. By continuity of $\nabla V_j$, $f_i$ and $\rho$ we have 
\[
\inp{\nabla V_j(x)}{f_{i}(x,u)}\leq -\rho(|x|),
\]
for each $j\in \cI_\cY(x)$.\\ We have proved~\eqref{eq:cond2} for all $x\in \R^n$, concluding the proof.
\end{proof}
We underline that we are not explicitly checking~\eqref{eq:cond2} on the zero Lebesgue measure set $\partial Y\setminus \partial X$: this is possible thanks to the continuity of $F^\sw$ when restricted to $\inn(X_i)$ for some $i\in \cI$.
As a special case of Corollary~\ref{cor:CoroSw1}, we present the situation $\cX=\cY$.

\begin{cor}\label{cor:maincor}
Consider a proper partition $\mathcal{X}=\{X_i, \cO_i\}_{i\in \cI}$ and the associated switched system \eqref{eq:swsystem}. Consider $V\in \mathscr{P}(\cX)$ such that there exist $\underline{\alpha},\overline{\alpha} \in \mathcal{K}_{\infty}$, $\rho\in \cPD$ and $\gamma \in \mathcal{K}$  satisfying
\begin{enumerate}[leftmargin=*,label=\textbf{\Alph*)'}]
\item for each $i \in \cI$, for each $x\in X_i$\label{item:Cor2Cond1}
\[
\underline{\alpha}(|x|) \leq V_i(x) \leq \overline{\alpha}(|x|);
\]
\item for each $i\in \cI$, for each $(x,u)\in  \inn(X_i) \times \R^m$,\label{item:Cor2Cond2}
\[
 V(x)> \gamma(|u|)\;\;\Rightarrow\;\;\inp{\nabla V_i(x)}{f_i(x,u)}\leq -\rho(|x|);
 \]
 \item for each $(x,u)\in \partial X \times \R^m$, \label{item:Cor2Cond3}
 \[ 
V(x)> \gamma(|u|)\;\;\Rightarrow\;\;\max \dot{\overline{V}}_{F^\sw}(x,u) \leq - \rho(|x|);
\]\label{eq:Liecondswitching}
\end{enumerate}
\vspace{-0.5cm}
then~system \eqref{eq:swsystem} is ISS.
\end{cor}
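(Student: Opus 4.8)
The plan is to obtain Corollary~\ref{cor:maincor} directly from Corollary~\ref{cor:CoroSw1} by taking the auxiliary partition $\cY$ to coincide with the partition $\cX$ of the switched system, i.e. $\cJ=\cI$, $Y_i=X_i$, $\cV_i=\cO_i$, with $V_i$ the local $\cC^1$ representatives of $V\in\mathscr{P}(\cX)$, and hence $\partial Y=\partial X$. Under this identification hypothesis \textbf{A)'} is literally hypothesis \textbf{A)} of Corollary~\ref{cor:CoroSw1}, and hypothesis \textbf{C)'} is literally hypothesis \textbf{C)}, so those require no work. The only point needing an argument is that \textbf{B)'} implies \textbf{B)}.

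To this end, first observe that $\inn(X_i)\setminus\partial X=\inn(X_i)$ for every $i\in\cI$. Indeed, $\inn(X_i)\cap\bd(X_i)=\emptyset$ by definition, and if $i\neq j$ and $x\in\inn(X_i)\cap\bd(X_j)$ then $x\in X_i\cap X_j=\bd(X_i)\cap\bd(X_j)$ by Item~e) of Definition~\ref{def:Ncov}, forcing $x\in\bd(X_i)$, a contradiction; hence $\inn(X_i)$ is disjoint from $\partial X=\bigcup_{j\in\cI}\bd(X_j)$. Next, on $\inn(X_i)$ one has $\cI_\cX(x)=\{i\}$, so the state-dependent switching signal satisfies $\sigma(x)=i$ and, by \eqref{eq:gengradPiece} in Proposition~\ref{lemma:propertiesofppf}, $\partial V(x)=\{\nabla V_i(x)\}$. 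Therefore, for $x\in\inn(Y_i)\setminus\partial X=\inn(X_i)$, the quantity $\inp{\nabla V_i(x)}{f_{\sigma(x)}(x,u)}$ appearing in \textbf{B)} equals $\inp{\nabla V_i(x)}{f_i(x,u)}$, and the implication demanded by \textbf{B)} is precisely the one assumed in \textbf{B)'}.

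Having matched all three hypotheses, Corollary~\ref{cor:CoroSw1} applies with $\cY=\cX$ and yields that system~\eqref{eq:swsystem} is ISS, which is the claim. No genuine obstacle arises: the statement is a pure specialization, and the only mildly delicate point—the identity $\inn(X_i)\setminus\partial X=\inn(X_i)$—is an immediate consequence of the non-overlapping property e) of proper partitions. (Alternatively, one could reprove the result from scratch along the lines of the proof of Corollary~\ref{cor:CoroSw1}, decomposing $\R^n$ into $\bigcup_{i\in\cI}\inn(X_i)$ and $\partial X$ and invoking Theorem~\ref{teo:mainres}; since now $\partial Y\setminus\partial X=\emptyset$, the third case of that proof—the limiting/continuity argument on $\partial Y\setminus\partial X$—disappears entirely, which is exactly why the hypotheses can be stated in this more economical form.)
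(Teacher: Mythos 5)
Your proof is correct and matches the paper's approach: the paper explicitly introduces Corollary~\ref{cor:maincor} as the special case $\cY=\cX$ of Corollary~\ref{cor:CoroSw1}, which is precisely the specialization you carry out. Your verification that $\inn(X_i)\setminus\partial X=\inn(X_i)$ via property e) of the proper partition, and that $F^\sw(x,u)=\{f_{\sigma(x)}(x,u)\}=\{f_i(x,u)\}$ on $\inn(X_i)$, correctly supplies the details the paper leaves implicit.
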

\begin{oss}[Comparison between Cor.~\ref{cor:CoroSw1} and Cor.~\ref{cor:maincor}]\label{rmk:ComparisonCorollaries}
To check the conditions of Corollary \ref{cor:maincor} for each $i\in \cI$, we must find a smooth Lyapunov function $V_i$ for the system $f_i$ on the set $\cO_i\supset X_i$. Then we must construct a continuous function $V$ by \virgolette{gluing together} the $V_i$'s on $\partial X$ and finally check the Lie-based condition~\ref{eq:Liecondswitching} on the switching surface $\partial X$. On the other hand, in some situations, it can be difficult to construct a smooth Lyapunov function even for a single subsystem in its region of activation if the subsystem is unstable. For this reason, in Corollary~\ref{cor:CoroSw1} we allow the candidate Lyapunov functions to be possibly non-differentiable in the interior of the $X_i$, and we do not need to check the conditions at the point of non-differentiability of $V$, since  $F(\cdot, u)$ is continuous in a neighborhood of $\partial Y\setminus \partial X$.\hfill $\triangle$
\end{oss}
In the following example, mainly inspired by~\cite[Example 1]{johansson}, we apply the result presented in~Corollary~\ref{cor:maincor} to a particular state-dependent switched system.
\begin{myexample}\label{ex:flower}
Consider the proper state-space partition $\mathcal{X}=\{X_i, \R^2\}_{i\in \{1,2\}}$ of $\R^2$ defined by
\[
X_i:=\{x \in \R^2 \;\vert\; x^\top Q_i x\geq 0\},
\]
with $Q_1= \begin{bsmallmatrix} -1 & 0\\0&1  \end{bsmallmatrix}$, and $Q_2=-Q_1$. Define the switching system
\[
\dot x =\begin{cases}
A_1x+Bu, & \text{if}\;x  \in X_1,\\
A_2 x+Bu, & \text{if}\;x  \in X_2,
\end{cases}
\]
with $A_1=  \begin{bsmallmatrix} -\varepsilon & a_1 \\ -a_2 & -\varepsilon \end{bsmallmatrix} $, $A_2=  \begin{bsmallmatrix} -\varepsilon & a_2 \\ -a_1 & -\varepsilon \end{bsmallmatrix}$ with $0<\varepsilon<a_1\leq a_2$ and $B\in \R^{2\times m}$ arbitrary.  We want to study the resulting Filippov regularization as defined in~\eqref{eq:swsystem}; for a graphical representation of the unperturbed case $u\equiv 0$, for some specific selections of $a_1,a_2,\varepsilon$, see~\cite[Figure 1]{DelRosNOLCOS19}. Consider the function $V:\R^2\to \R$ defined by
\[
V(x)=\begin{cases}
V_1(x)=x^\top P_1 x \;\;\;\;\;\text{if}\; x\in X_1,\\
V_2(x)=x^\top P_2 x\;\;\;\;\;\text{if}\; x \in X_2,
\end{cases}
\]
with $P_1=\begin{bsmallmatrix}
a_2 & 0\\ 0 & a_1 
\end{bsmallmatrix}$, $P_2=\begin{bsmallmatrix}
a_1 & 0\\0 & a_2 
\end{bsmallmatrix}$.
We now show that $V$ is an ISS Lyapunov function, in the sense of Corollary~\ref{cor:maincor}.
Since $P_2-P_1=(a_2-a_1)Q_2$, $V$ is continuous (and thus $V\in \mathscr{P}(\cX)$) and moreover it satisfies item~\ref{item:Cor2Cond1} of Corollary~\ref{cor:maincor} with $\underline{\alpha}(s)=a_1s^2$ and $\overline{\alpha}(s)=a_2 s^2$. Moreover, it can be seen that
\[
A_i^\top P_i+P_iA_i+\varepsilon I\preceq0, \;\;\;\;\text{for any }i\in \{1,2\};
\]
thus, following the reasoning of \cite[Lemma 4.6]{khalil2002nonlinear}, it can be proven that, for any $0<\varepsilon'<\varepsilon$ we have
\begin{equation}\label{eq:EXFirstSTep}
|x|\geq b|u|\,\,\Rightarrow x^\top (A_i^\top P_i+P_iA_i)x+2x^\top P_iBu\leq -\varepsilon'|x|^2, 
\end{equation}
for some $b>0$ depending on $A_1,A_2,B,P_1,P_2$ and $\varepsilon'$.\\
It is easy to check $\partial X=\{x\in \R^2\;\vert\;x^\top Q_1 x=0\}=\mathcal{R}_1\cup \mathcal{R}_2$ where $\mathcal{R}_1=\{[x_1\;x_2]^\top \in \R^2\;\vert\;x_1=x_2\}$ and $\mathcal{R}_2=\{[x_1\;x_2]^\top \in \R^2\;\vert\;x_1=-x_2\}$. Consider $z\in \partial X$, and firstly, let us suppose that $z\in \mathcal{R}_1$, and define $\mu:=\frac{|z|}{\sqrt{2}}$; we can write $z=\mu[1\;1]^\top$ (w.l.o.g. supposing that $z$ lies in the first quadrant). By Proposition~\ref{lemma:propertiesofppf} and~\eqref{eq:swsystem}, we have that 
\[
\partial V(z)=2\mu\co\left\{v_1,v_2 \right\} \;\;\text{and }\;F(z,u)=\mu\co\{f_1,f_2\}+Bu,
\]
where $v_1=[a_2\;a_1]^\top$, $v_2=[a_1\;a_2]^\top$ and $f_i=A_i[1\;1]^\top$ for $i\in\{1,2\}$. Recalling Definition~\ref{def:geneder}, we have that $\dot{\overline{V}}_{F^\sw}(z,u)\neq \emptyset$
 if and only if there exists $\lambda\in \R$, $0\leq \lambda\leq 1$, such that
 \[
[\mu(\lambda f_1+(1-\lambda) f_2)+Bu]^\top v_1=[\mu(\lambda f_1+(1-\lambda)f_2)+Bu]^\top v_2.
 \]
 Simplifying, we obtain 
$
(a_1+a_2) \mu=[-1\;\,1]^\top Bu$ (note that the dependence on $\lambda$ cancels out). Now, it is clear that if $(a_1+a_2)\mu\geq \sqrt2\|B\||u|$, this equation has no solution\footnote{Here, and in what follows, we consider the 2-induced matrix norm, i.e., given $M\in \R^{n\times m}$ we define
\[
\|M\|:=\sup_{|x|=1}|Mx|
\]}. Recalling the definition of $\mu$ we have proved that
\begin{equation}\label{eq:ExConD2}
z\in \mathcal{R}_1 \text{ and } |z|\geq \frac{2}{a_1+a_2}\|B\||u|\;\Rightarrow \; \dot{\overline{V}}_{F^\sw}(z,u)= \emptyset.
\end{equation}
 Since $A_i=R^\top A_jR$  and $P_i=R^\top P_jR$  with $R=\begin{bsmallmatrix}0 &1\\   -1&0\end{bsmallmatrix}$ for any $i,j\in\{1,2\}$, $i\neq j$, it can be seen that \eqref{eq:ExConD2} holds also for $z\in\mathcal{R}_2$.
Fixing $\varepsilon'<\varepsilon$ in~\eqref{eq:EXFirstSTep} and defining  $\gamma(s):=a_2\max\{bs, \tfrac{2\|B\|}{a_1+a_2}\,s\}$ and $\rho (s):=\varepsilon' s^2$, we have proved Items~\ref{item:Cor2Cond2} and~\ref{item:Cor2Cond3} of Corollary~\ref{cor:maincor}, establishing that $V$ is a (nonpathological) ISS Lyapunov function, and thus the system is ISS. \hfill $\triangle$
 \end{myexample}
\begin{oss}
The construction proposed in Example~\ref{ex:flower} can be generalized to broader settings. The main idea is the following:  consider a proper partition $\mathcal{X}=\{X_i, \cO_i\}_{i\in \cI}$ and the switched system \eqref{eq:swsystem} with $f_i(x,u)=A_ix+g_i(u)$, with $A_i\in \R^{n\times n}$ and $g_i:\R^m\to \R^n$ satisfying a linear growth condition, i.e. $\exists\;L>0$ such that $g_i(u)\leq L|u|$ for all $u\in \R^m$, for all $i\in \cI$. Suppose that a piecewise quadratic $V\in \cP(\cX)$  satisfies conditions~\ref{item:Cor2Cond1} and~\ref{item:Cor2Cond2} of Corollary~\ref{cor:maincor}. Then, if for the \emph{unperturbed} system the Lie derivative on the discontinuity surface $\partial X$ is empty, i.e.
\[
\dot{\overline{V}}_{F^\sw}(x,0)=\emptyset,\;\;\;\forall x\in \partial X,
\]
it follows that condition~\ref{item:Cor2Cond3} is satisfied. Intuitively speaking, this is due to the fact that the set $\dot{\overline{V}}_{F^\sw}(x,u)$ remains empty for all $x\in \R^n$ and all $u\in \R^m$ small enough.  For further insight regarding the   construction of piecewise $\cC^1$ (or piecewise quadratic) Lyapunov functions for (unperturbed) state-dependent switching systems, we refer the reader to~\cite{johansson},~\cite{IertrennVasca},~\cite{dellarossa19} and references therein. \hfill $\triangle$
\end{oss}

\section{Interconnected Differential Inclusions}\label{sec:Interconnected}
In this section, we use Theorem~\ref{teo:mainres} to study stability of feedback and cascade interconnections of two systems modeled by differential inclusions. 
\subsection{Feedback Interconnection and Small Gain Theorem}\label{sec:smallGain}
For the system shown in Figure~\ref{fig:blockdiagram}, we establish  ISS of the interconnected system by constructing a Lyapunov function from two (nonsmooth) ISS-Lyapunov functions associated with the two  subsystems. 

 Consider $F_1:\R^{n_1}\times \R^{n_2}\times \R^m\rightrightarrows \R^{n_1}$ and $F_2:\R^{n_1}\times \R^{n_2}\times \R^m\rightrightarrows \R^{n_2}$ and suppose that they are locally bounded, have non empty, compact and convex values and are upper semicontinuous in the first two arguments and continuous in the third one. Consider the interconnection
 \begin{subequations}\label{eq:InterconnectedDiffInc}
\begin{align}
&\dot x_1\in F_1(x_1,x_2,u),\label{eq:subsys1}\\
& \dot x_2\in F_2(x_1,x_2,u).\label{eq:subsys2}
\end{align}
\end{subequations}
We introduce the notation $x=(x_1,x_2):=(x_1^\top,x_2^\top)^\top \in \R^n=\R^{n_1+n_2}$ and the augmented differential inclusion
\begin{equation}\label{eq:CompDiffInc}
\dot x\in F(x,u):=\begin{pmatrix}
F_1(x_1,x_2,u)\\
F_2(x_1,x_2,u)
\end{pmatrix}.
\end{equation}
We start our construction by assuming the existence of ISS-Lyapunov functions for the two subsystems, in order to conclude ISS of the overall interconnection~\eqref{eq:CompDiffInc}. These types of assumptions characterize classical ISS approaches~\cite{JianTeel94, JiaMarWan96}, also used in the recent works~\cite{HeemWei08}, ~\cite{LibNes14}. The novelty introduced here lies in the fact that we consider \emph{nonpathological} ISS Lyapunov functions satisfying the ``relaxed''  conditions involving the Lie derivative presented in Theorem~\ref{teo:mainres}, as formalized in the following statement.

\begin{assumption}\label{ass:InterLyapFuncSub}
Suppose that there exist nonpathological functions $V_1:\Ri\to \R$ and $V_2:\Rii\to\R$  such that
\begin{enumerate}
\item[1a)] There exist $\underline{\alpha}_1,\overline{\alpha}_1\in \cK_\infty$ satisfying
\[
\underline{\alpha}_1(|x_1|)\leq V_1(x_1)\leq \overline{\alpha}_1(|x_1|),\;\;\;\forall\;x_1\in \Ri.
\] 
\item[1b)] There exist $\underline{\alpha}_2,\overline{\alpha}_2\in \cK_\infty$ satisfying
\[
\underline{\alpha}_2(|x_2|)\leq V_2(x_2)\leq \overline{\alpha}_2(|x_2|),\;\;\;\forall\;x_2\in \Rii.
\] 
\item[2a)]There exist $\rho_1\in \cPD$, and $\chi_1,\gamma_1\in \cK$ satisfying
\begin{equation*}
\begin{aligned}
V_1(x_1)> \max\{\chi_1&(V_2(x_2)), \gamma_1(|u|)\}\\ &\Downarrow\\\max \dot{\overline{V}}_{1,F_1}(x_1,x_2,&u)\leq -\rho_1(V_1(x_1))
\end{aligned}
\end{equation*}
\item[2b)]There exist $\rho_2\in \cPD$, and $\chi_2,\gamma_2\in \cK$ satisfying
\begin{equation*}
\begin{aligned}
V_2(x_2)> \max\{\chi_2&(V_1(x_1)), \gamma_2(|u|)\}\\&\Downarrow\\\max \dot{\overline{V}}_{2,F_2}(x_1,x_2,&u)\leq -\rho_2(V_2(x_2))
\end{aligned}
\end{equation*}
\end{enumerate}
\end{assumption}

\begin{figure}
\begin{center}
\includegraphics[scale=1]{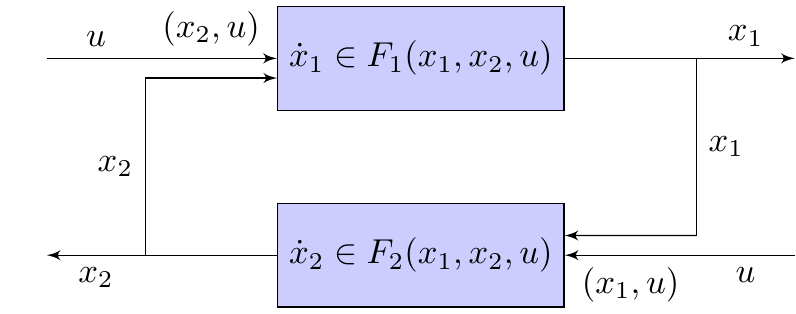}
\caption{The interconnected system in \eqref{eq:CompDiffInc}.}
\label{fig:blockdiagram}
\end{center}
\end{figure}
Since we want to combine the functions $V_1$ and $V_2$ to obtain a nonpathological ISS function $W:\R^n\to \R$ for the interconnected system~\eqref{eq:CompDiffInc}, we need the following results from non-smooth analysis.
\begin{fact}\cite[Theorem 2.6.6]{clarke3}\label{fact:1}
Consider a locally Lipschitz function $V:\R^k\to\R$  and $\sigma \in \cC^1(\R,\R)$, and define $U:=\sigma\circ V$. We have
\[
\partial U(x)=\sigma'(V(x))\partial V(x), \;\;\;\forall\,x\in \R^k,
\]
where $\sigma'(s)$ denotes the derivative of $\sigma$ at $s\in \R$.\hfill $\triangle$
\end{fact}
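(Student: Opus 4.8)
The plan is to argue directly from the limiting‑gradient characterization~\eqref{eq:limClark}, which the paper takes here as the definition of $\partial$ (an alternative would be to compare Clarke directional derivatives, i.e.\ support functions of the two sets, but the gradient route is shorter). The first ingredient is the elementary fact that post‑composition with a $\cC^1$ scalar map preserves classical differentiability: at every point $y$ where $V$ is (Fréchet) differentiable, the ordinary chain rule gives that $U=\sigma\circ V$ is differentiable at $y$ with $\nabla U(y)=\sigma'(V(y))\,\nabla V(y)$. Hence the non‑differentiability set $\cN_U$ of $U$ satisfies $\cN_U\subseteq\cN_V$, so $\cN_U\cup\cN_V=\cN_V$ still has zero Lebesgue measure. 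I would also record, for later use, that $U$ is locally Lipschitz — a composition of locally Lipschitz maps, since $\sigma\in\cC^1$ is locally Lipschitz — so that $\nabla U$ and $\nabla V$ are locally bounded near the fixed point $x$.

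Next I would rewrite $\partial U(x)$: by the robustness of~\eqref{eq:limClark} under enlargement of the excluded null set \cite[Theorem 2.5.1]{clarke3}, together with $\cN_U\subseteq\cN_V$, one may restrict to sequences avoiding the larger set $\cN_V$ and substitute the chain rule at such points,
\begin{gather*}
\partial U(x)=\co\{\lim_k \nabla U(x_k)\,\vert\,x_k\to x,\,x_k\notin\cN_V\}\\
=\co\{\lim_k \sigma'(V(x_k))\,\nabla V(x_k)\,\vert\,x_k\to x,\,x_k\notin\cN_V\}.
\end{gather*}
The heart of the argument is then to identify the set of these limits with $\sigma'(V(x))$ times the set of limiting gradients of $V$ at $x$. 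For ``$\subseteq$'': given $\sigma'(V(x_k))\,\nabla V(x_k)\to w$ with $x_k\to x$ and $x_k\notin\cN_V$, local boundedness of $\nabla V$ lets me extract a subsequence along which $\nabla V(x_k)\to p$; continuity of $V$ and of $\sigma'$ gives $\sigma'(V(x_k))\to\sigma'(V(x))$, hence $w=\sigma'(V(x))\,p$ with $p$ a limiting gradient of $V$ at $x$. The inclusion ``$\supseteq$'' is immediate: $\nabla V(x_k)\to p$ forces $\sigma'(V(x_k))\,\nabla V(x_k)\to\sigma'(V(x))\,p$. Taking convex hulls and using that $\co(cS)=c\,\co(S)$ for any scalar $c$ and any nonempty $S$ then yields $\partial U(x)=\sigma'(V(x))\,\partial V(x)$.

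The step I expect to require the most care is the degenerate case $\sigma'(V(x))=0$: there the asserted right‑hand side collapses to $\{0\}$, and one must make sure $\partial U(x)=\{0\}$ rather than a larger set. This is handled for free by the limit computation above — $\sigma'(V(x_k))\to 0$ against a bounded sequence $\nabla V(x_k)$ forces every limit to vanish — but it is worth stating explicitly, together with the harmless observation that $\sigma'$ may change sign along the range of $V$ without causing trouble, since in the passage to the limit it is only ever evaluated at the single value $V(x)$. A second bookkeeping point to flag is that $\cN_U$ may be strictly smaller than $\cN_V$ (they can differ only at points where $\sigma'(V(\cdot))$ vanishes), which is precisely why the null‑set‑enlargement clause of \cite[Theorem 2.5.1]{clarke3} is what makes the substitution $\nabla U=\sigma'(V)\,\nabla V$ legitimate inside~\eqref{eq:limClark}.
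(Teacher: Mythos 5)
The paper does not prove this statement at all: it is imported verbatim as \cite[Theorem 2.6.6]{clarke3}, so there is no in-paper argument to compare against. Your blind proof is a correct, self-contained derivation via the gradient formula~\eqref{eq:limClark}: the chain-rule identity $\nabla U=\sigma'(V)\,\nabla V$ on the differentiability set, the inclusion $\cN_U\subseteq\cN_V$ combined with the null-set-enlargement clause of \cite[Theorem 2.5.1]{clarke3}, the two-sided identification of limiting gradients using local boundedness of $\nabla V$ and continuity of $\sigma'\circ V$, and the scaling identity $\co(cS)=c\,\co(S)$ are all sound, and you correctly dispose of the degenerate case $\sigma'(V(x))=0$. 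The only ingredient you take on faith is the robustness of~\eqref{eq:limClark} under discarding an additional null set, which is exactly the right thing to cite; everything else is elementary, so the argument stands as a legitimate replacement for the external reference.
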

\begin{fact}\cite[Proposition 2.3.12]{clarke3}\label{fact:2}
Given two locally Lipschitz functions $V_1:\R^k\to \R$ and $V_2:\R^k\to \R$ consider the function $V(x):=\max\{V_1(x),V_2(x)\}$. Given any $z\in \R^k$ such that $V_1(z)=V_2(z)$, it holds that
\[
\partial V(z)\subseteq \co\{\partial V_1(z), \partial V_2(z)\}.\tag*{$\triangle$}
\]
\end{fact}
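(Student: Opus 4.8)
To establish the stated inclusion the plan is to argue directly from the limiting-gradient characterization~\eqref{eq:limClark} of the Clarke generalized gradient. Since $V_1$ and $V_2$ are locally Lipschitz, so is $V:=\max\{V_1,V_2\}$, so~\eqref{eq:limClark} applies to all three functions. I would first recall the standard refinement of~\eqref{eq:limClark} (see~\cite[Theorem~2.5.1]{clarke3}) by which, for any fixed set $S$ of zero Lebesgue measure, $\partial V(z)=\co\{\lim_k\nabla V(x_k)\mid x_k\to z,\ x_k\notin \cN_V\cup S\}$. Taking $S:=\cN_{V_1}\cup\cN_{V_2}$ (still a null set), it then suffices to show that every limit $v=\lim_k\nabla V(x_k)$ along a sequence $x_k\to z$ with $x_k\notin\cN_V\cup\cN_{V_1}\cup\cN_{V_2}$ belongs to $\co\{\partial V_1(z),\partial V_2(z)\}$.

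The key step is the pointwise claim that $\nabla V(x_k)\in\{\nabla V_1(x_k),\nabla V_2(x_k)\}$ for each such $x_k$. If $V_1(x_k)>V_2(x_k)$, continuity gives $V=V_1$ on a neighbourhood of $x_k$, hence $\nabla V(x_k)=\nabla V_1(x_k)$; the case $V_2(x_k)>V_1(x_k)$ is symmetric. In the remaining case $V_1(x_k)=V_2(x_k)$, the function $V-V_1=\max\{0,\,V_2-V_1\}$ is nonnegative and vanishes at $x_k$, so $x_k$ is a (global, hence local) minimizer of a function differentiable at $x_k$; therefore $\nabla V(x_k)-\nabla V_1(x_k)=0$, and symmetrically $\nabla V(x_k)=\nabla V_2(x_k)$. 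Passing to a subsequence along which the chosen gradient is always the same, say $\nabla V(x_k)=\nabla V_1(x_k)$ for all $k$ (the other case being identical), we get $\nabla V_1(x_k)\to v$ with $x_k\to z$, $x_k\notin\cN_{V_1}$, so $v\in\partial V_1(z)$ by~\eqref{eq:limClark} applied to $V_1$. Taking convex hulls over all admissible limits $v$ yields $\partial V(z)\subseteq\co\{\partial V_1(z),\partial V_2(z)\}$.

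The one point that requires care is the reduction in the first paragraph: one must be able to avoid the extra null set $\cN_{V_1}\cup\cN_{V_2}$ when forming the limits defining $\partial V(z)$, which is exactly the content of the null-set-insensitivity of~\eqref{eq:limClark}; the coincidence case $V_1(x_k)=V_2(x_k)$ is the only other place where a (short) argument is needed, and the rest is routine. As an alternative route, one may instead write $V=\tfrac12(V_1+V_2)+\tfrac12\,|V_1-V_2|$ and combine the Clarke sum rule, the scalar-multiplication rule, and the Lipschitz chain rule applied to $t\mapsto|t|$ at $t=0$ (where $\partial|\cdot|(0)=[-1,1]$), together with the identity $\tfrac{1+\zeta}{2}\,p_1+\tfrac{1-\zeta}{2}\,p_2$ for $\zeta\in[-1,1]$, to recover the claimed convex combination of elements of $\partial V_1(z)$ and $\partial V_2(z)$.
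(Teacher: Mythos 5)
Your main argument is correct and complete; note that the paper itself gives no proof of this statement, quoting it directly from Clarke's monograph (Proposition 2.3.12), whose own proof runs through the generalized directional derivative $V^\circ(z;\cdot)$ and support-function arguments rather than through limiting gradients. Your route is therefore genuinely different and, in the context of this paper, arguably the more natural one: since the paper \emph{adopts} the gradient-limit formula \eqref{eq:limClark} as its definition of $\partial V$, your proof stays entirely inside that characterization. The two ingredients you isolate are exactly the right ones: the null-set insensitivity of \eqref{eq:limClark} (indeed part of \cite[Theorem 2.5.1]{clarke3}, so the extra null set $\cN_{V_1}\cup\cN_{V_2}$ may legitimately be excluded), and the observation that at a coincidence point where $V$, $V_1$, $V_2$ are all differentiable, $V-V_1=\max\{0,V_2-V_1\}$ attains a minimum, forcing $\nabla V=\nabla V_1=\nabla V_2$ there. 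Incidentally, the paper's Appendix (Lemma~\ref{prop:PropoAppendix}) runs essentially the same limiting-gradient computation for piecewise $\cC^1$ functions, so your argument is well aligned with the paper's toolbox.

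One caveat: the ``alternative route'' in your final sentence does not work as sketched. Writing $V=\tfrac12(V_1+V_2)+\tfrac12|V_1-V_2|$ and applying the sum rule and the chain rule produces elements of the form $\tfrac12(q_1+q_2)+\tfrac{\zeta}{2}(p_1-p_2)$ in which $q_1,p_1\in\partial V_1(z)$ and $q_2,p_2\in\partial V_2(z)$ are chosen \emph{independently}; the identity $\tfrac{1+\zeta}{2}p_1+\tfrac{1-\zeta}{2}p_2$ recovers a convex combination only when $q_i=p_i$, which the calculus rules do not guarantee. For instance, with $\partial V_1(z)=\{0\}$ and $\partial V_2(z)=[0,1]$ in $\R^1$, the decoupled estimate contains $-\tfrac12\notin[0,1]=\co\{\partial V_1(z),\partial V_2(z)\}$. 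Since this is offered only as an aside, it does not affect the validity of your proof; simply drop it, or note that it yields a strictly weaker outer estimate.
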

Moreover, we need this well-known comparison result, for the proof, see \cite[Theorem 3.1]{JiaMarWan96}.
\begin{fact}\label{fact:3}
Given $\chi_1,\chi_2\in \cK$ such that 
$
\chi_1\circ\chi_2(r)<r$, $\forall r>0$,
there exists a continuously differentiable $\sigma\in \cK_\infty$ with 
 $\sigma'(s)>0$ for all $s\in [0,\infty)$, such that
\begin{equation}\label{eq:SigmaFunctionBetween}
\chi_2(r)<\sigma(r),\;\;\text{and}\;\chi_1(\sigma(r))<r,\;\;\forall \;r>0.
\end{equation}
\hfill $\triangle$
\end{fact}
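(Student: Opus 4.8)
The plan is to realize $\sigma$ as a smooth, strictly increasing function squeezed \emph{strictly} between $\chi_2$ and the (partially defined) inverse of $\chi_1$. First I would rewrite the two desired inequalities symmetrically. Set $L:=\lim_{s\to\infty}\chi_1(s)\in(0,\infty]$ and let $g:(0,\infty)\to(0,\infty]$ be $g(r):=\chi_1^{-1}(r)$ for $0<r<L$ and $g(r):=+\infty$ for $r\ge L$; since $\chi_1\in\cK$ is a strictly increasing homeomorphism onto $[0,L)$, the condition $\chi_1(\sigma(r))<r$ is equivalent to $\sigma(r)<g(r)$, while the hypothesis $\chi_1\circ\chi_2(r)<r$ for all $r>0$ says exactly $\chi_2(r)<g(r)$ for all $r>0$. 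So the task reduces to: find $\sigma\in\cK_\infty$, continuously differentiable with $\sigma'>0$, such that $\chi_2(r)<\sigma(r)<g(r)$ for every $r>0$. To avoid the value $+\infty$ I would replace $g$ by the honest $\cK_\infty$ upper bound $\bar g:=\min\{g,\,\chi_2+\mathrm{id}\}$, which is still strictly increasing, tends to $+\infty$, and satisfies $\chi_2<\bar g$ on $(0,\infty)$; thus it suffices to sandwich $\sigma$ strictly between a lower function in $\cK$ (namely $\chi_2$) and an upper function in $\cK_\infty$ (namely $\bar g$), the gap $\bar g-\chi_2$ being positive on all of $(0,\infty)$.

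Next I would build a \emph{continuous} strictly increasing sandwich, then smooth it. For the continuous step: choose a doubly infinite sequence $\dots<r_{-1}<r_0<r_1<\dots$ with $r_k\to0$ as $k\to-\infty$ and $r_k\to\infty$ as $k\to+\infty$ (e.g.\ $r_k=2^k$); on each $[r_k,r_{k+1}]$ the continuous positive function $\bar g-\chi_2$ attains a positive minimum, so one can pick values $\sigma_0(r_k)\in(\chi_2(r_k),\bar g(r_k))$ that are strictly increasing in $k$, tend to $0$ (resp.\ $+\infty$), and sit well inside the tube; interpolating affinely between consecutive $r_k$ and using monotonicity of $\chi_2$ and $\bar g$ to check that the tube condition survives on each subinterval, one gets $\sigma_0\in\cK_\infty$, continuous, strictly increasing, with $\chi_2(r)<\sigma_0(r)<\bar g(r)$ for all $r>0$. (When $\chi_1,\chi_2\in\cK_\infty$ one can shortcut this by taking $\sigma_0:=\tfrac12(\chi_2+\chi_1^{-1})$ directly.)

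For the smoothing step I would mollify $\sigma_0$ with a kernel whose radius shrinks where necessary: fix a $C^\infty$ probability density $\kappa$ supported in $[-1,1]$ and a $C^\infty$ function $\varepsilon:(0,\infty)\to(0,\infty)$, and set $\sigma(r):=\int_{\R}\sigma_0\!\bigl(r+\varepsilon(r)t\bigr)\kappa(t)\,dt$. For each $r>0$ the set $\sigma_0\bigl([r-\varepsilon(r),r+\varepsilon(r)]\bigr)$ is a small interval around $\sigma_0(r)$, so if $\varepsilon(r)$ is small enough it stays inside the open tube $(\chi_2(r),\bar g(r))$; then $\sigma(r)$, an average of such values, also lies strictly in the tube, giving $\chi_2(r)<\sigma(r)$ and $\chi_1(\sigma(r))<r$. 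Differentiating under the integral sign shows $\sigma\in C^\infty$, and an integration by parts in the mollification variable writes $\sigma'(r)$ as a local average of $\sigma_0'$ (which exists and is positive a.e., $\sigma_0$ being Lipschitz) plus a term controlled by $\sup|\varepsilon'|$; hence $\sigma'>0$ once $\varepsilon$ is chosen with $\sup|\varepsilon'|$ small. Finally $\sigma\in\cK_\infty$ since it is trapped between a function vanishing at $0^+$ and $\bar g\in\cK_\infty$.

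The hard part will be the behavior at the endpoints $r\to0$ and, if $L<\infty$, $r\to L$: there the tube $(\chi_2(r),\bar g(r))$ collapses or its upper edge behaves badly, and one must verify that a \emph{smooth} radius $\varepsilon$ with $\sup|\varepsilon'|$ small can still be chosen so that $\sigma_0\bigl([r-\varepsilon(r),r+\varepsilon(r)]\bigr)\subset(\chi_2(r),\bar g(r))$ for \emph{every} $r$ — equivalently, that the ``admissible radius'' $r\mapsto\sup\{\delta>0:\sigma_0([r-\delta,r+\delta])\subset(\chi_2(r),\bar g(r))\}$, which is positive and lower semicontinuous, has a positive smooth minorant with small derivative. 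This is a standard but slightly delicate partition-of-unity/regularization step, which I expect becomes cleanest after the substitution $r=e^\tau$ that moves the degenerate ends to $\tau=\pm\infty$ and turns $\cK_\infty$ functions into increasing homeomorphisms of $\R$. One final subtlety: taking e.g.\ $\chi_2=\sqrt{\,\cdot\,}$ shows $\sigma'(0)$ cannot be finite, so the clause ``$\sigma'(s)>0$ for all $s\in[0,\infty)$'' should be read as ``for all $s>0$'' unless $\chi_2$ has finite slope at the origin — which holds in the ISS settings of interest — in which case $\sigma'(0)>0$ is recovered automatically from the construction.
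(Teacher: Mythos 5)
The paper does not actually prove Fact~\ref{fact:3}; it only points to \cite[Theorem 3.1]{JiaMarWan96} (the relevant construction there is the appendix lemma). What you propose is essentially that standard argument, reconstructed: rewrite both inequalities as the single requirement that $\sigma$ be squeezed strictly between $\chi_2$ and a finite $\cK_\infty$ truncation $\bar g$ of the partially defined $\chi_1^{-1}$, build a continuous strictly increasing selection inside the open tube, and mollify with a variable radius. The reduction to $\chi_2<\sigma<\bar g$ is correct (including the case $r\geq L$, where $\chi_1(\sigma(r))<L\leq r$ is automatic), and the two steps you flag as delicate are indeed the only fiddly ones and both go through: the affine interpolant stays in the tube once the grid is refined so that $\chi_2(r_{k+1})<\bar g(r_k)$ on each cell (with the fixed dyadic grid this can fail; alternatively $\sigma_0:=\tfrac12(\chi_2+\bar g)$ already gives the continuous sandwich in general, though your piecewise-affine $\sigma_0$ has the advantage of a locally positive lower bound on $\sigma_0'$, which is what makes the estimate $\sigma'(r)\geq c\,(1-|\varepsilon'(r)|)>0$ work after mollification); and the admissible-radius function is positive and lower semicontinuous, so it admits a positive smooth minorant with $|\varepsilon'|<1$ by the usual inf-convolution/partition-of-unity argument.

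Your closing remark is a genuine catch, not a quibble. With $\chi_2(r)=\sqrt r$ and $\chi_1(s)=s^2/2$ one has $\chi_1\circ\chi_2(r)=r/2<r$, yet any $\sigma$ satisfying $\sigma(r)>\sqrt r$ near the origin cannot have a finite derivative at $0$; hence the clause \virgolette{$\sigma'(s)>0$ for all $s\in[0,\infty)$}, read together with continuous differentiability at $s=0$, is unattainable in general and must be weakened to $s>0$ (i.e., $\sigma\in\cC^1$ on $(0,\infty)$). This weaker reading is all the paper ever uses: Lemma~\ref{lemma:technicalGradient} and the proof of Theorem~\ref{thm:SmallGainThm} explicitly assume only $\sigma'(s)>0$ for $s>0$, and $\sigma'$ is evaluated only at $V_1(z_1)$ with $z_1\neq 0$ (on $\cO_1$ and on $\Gamma\setminus\{0\}$ one necessarily has $V_1(z_1)>0$), so nothing downstream is affected.
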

The geometrical intuition behind~\eqref{eq:SigmaFunctionBetween} is that the graph of the function $\sigma$ lies between the graphs of $\chi_2$ and $\chi_1^{-1}$, see for example Fig.1 in~\cite{JiaMarWan96}.
Finally, the following lemma will be used in the proof.
\begin{lemma}\label{lemma:technicalGradient}
Suppose $V_1:\Ri\to \R$ and $V_2:\Rii\to \R$ are two nonpathological functions satisfying Assumption~\ref{ass:InterLyapFuncSub}. Consider $\sigma \in \cC^1(\R_+,\R_+)$ such that $\sigma'(s)>0$ for all $s>0$ and the composite function $U_1:=\sigma \circ V_1$. Let  $W(x_1,x_2):=\max\{U_1(x_1),V_2(x_2)\}$, and consider a point $z=(z_1,z_2)\neq(0,0)$, $z_1\in \Ri$, $z_2\in \Rii$ such that $U_1(z_1)=V_2(z_2)$. It holds that 
\begin{equation}\label{eq:genGradMaxFun}
\partial W(z_1,z_2)=\co\left \{\partial \widehat U_1(z_1), \partial \widehat V_2(z_2)\right\}
\end{equation}
where 
\[
\begin{aligned}
\partial \widehat U_1(z_1)&:=\begin{pmatrix} \partial U_1(z_1)\\0\end{pmatrix}=\begin{pmatrix}  \sigma'(V_1(z_1))\partial V_1(z_1)\\0  \end{pmatrix}\;\;\text{and }\\ \partial \widehat V_2(z_2)&:=\begin{pmatrix}  0\\\partial V_2(z_2) \end{pmatrix}.
\end{aligned}
\]
\end{lemma}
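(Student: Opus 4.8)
The plan is to compute $\partial W(z_1,z_2)$ by a two-step reduction: first lift the one-variable generalized gradients $\partial U_1(z_1)\subseteq\Ri$ and $\partial V_2(z_2)\subseteq\Rii$ to the product space $\R^n=\Ri\times\Rii$, and then apply the max-rule of Fact~\ref{fact:2}. For the lifting step I would observe that the function $(x_1,x_2)\mapsto U_1(x_1)$ on $\R^n$ has, at any point, generalized gradient $\binom{\partial U_1(z_1)}{0}=\partial\widehat U_1(z_1)$; this follows because $U_1$ is differentiable in the (absent) $x_2$-direction with zero derivative, so by the standard product/partial-gradient formula (or directly from the definition~\eqref{eq:limClark} using sequences $x_k\to z$ along which $\nabla U_1$ exists, whose $x_2$-components are identically $0$) the Clarke gradient of the lifted function is exactly the lift of the Clarke gradient. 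The identity $\partial U_1(z_1)=\sigma'(V_1(z_1))\partial V_1(z_1)$ is then immediate from Fact~\ref{fact:1}, and note $\sigma'(V_1(z_1))>0$ since we either have $z_1\neq0$, forcing $V_1(z_1)>0$ by Assumption~\ref{ass:InterLyapFuncSub}(1a), or $z_1=0$ in which case $\sigma'(0)>0$ is part of the hypothesis. The same lifting applies to $\widehat V_2$.

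Once both functions are viewed on $\R^n$, we have $W=\max\{\widehat U_1,\widehat V_2\}$ and $\widehat U_1(z)=U_1(z_1)=V_2(z_2)=\widehat V_2(z)$, so Fact~\ref{fact:2} gives the inclusion $\partial W(z)\subseteq\co\{\partial\widehat U_1(z),\partial\widehat V_2(z)\}=\co\{\partial\widehat U_1(z_1),\partial\widehat V_2(z_2)\}$, which is the ``$\subseteq$'' direction of~\eqref{eq:genGradMaxFun}. For the reverse inclusion I would argue that on $\R^n$ the set where $\widehat U_1>\widehat V_2$ is open, and on it $W=\widehat U_1$ locally, so any limiting gradient $\lim\nabla W(x_k)$ with $x_k$ in that set and approaching $z$ lies in $\partial\widehat U_1(z)$; symmetrically for $\widehat V_2>\widehat U_1$. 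Taking sequences approaching $z$ through each of these two open regions (at least one of which has $z$ in its closure, and in the generic case both do) and forming the convex hull recovers the full set $\co\{\partial\widehat U_1(z_1),\partial\widehat V_2(z_2)\}$. One should also check that the ``boundary'' set $\{\widehat U_1=\widehat V_2\}$ where neither piece dominates is Lebesgue-null near $z$ (because $V_1,V_2$ are nonpathological, equivalently locally Lipschitz, and the coincidence set of two distinct pieces of such a patched function has empty interior in the relevant cases), so it contributes no extra limiting gradients; alternatively one invokes that $\partial W(z)$ always contains both one-sided limiting contributions whenever $z$ is approached by points where each piece is strictly larger.

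The main obstacle I anticipate is the reverse inclusion when one of the two open regions $\{\widehat U_1>\widehat V_2\}$ or $\{\widehat V_2>\widehat U_1\}$ does \emph{not} have $z$ in its closure — then the naive sequential argument only produces one of the two sets, not their convex hull. Handling this cleanly requires either a hypothesis ruling it out, or the sharper fact that for a pointwise maximum of two locally Lipschitz functions that are each \emph{regular} (here each $V_i$ is either $\cC^1$, convex, or Clarke-regular — but the Assumption only says nonpathological, which is weaker) equality holds in Fact~\ref{fact:2}. Since the statement of Lemma~\ref{lemma:technicalGradient} asserts equality unconditionally, I would look to exploit the specific structure: $\widehat U_1$ and $\widehat V_2$ depend on disjoint blocks of coordinates, so near $z=(z_1,z_2)$ one can move $x_1$ to make $\widehat U_1$ strictly larger while simultaneously moving $x_2$ to make $\widehat V_2$ strictly larger along a \emph{different} sequence, and the disjoint-variable structure guarantees both regions accumulate at $z$ provided $\partial V_1(z_1)$ and $\partial V_2(z_2)$ are not degenerate — and when $z\neq(0,0)$, positivity of $V_1,V_2$ away from the origin together with the coercive bounds in Assumption~\ref{ass:InterLyapFuncSub}(1a)--(1b) ensures the relevant non-degeneracy. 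I expect the write-up to hinge on making this disjoint-coordinate accumulation argument precise.
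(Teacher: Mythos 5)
Your architecture is the same as the paper's: the inclusion $\partial W(z)\subseteq\co\{\partial\widehat U_1(z_1),\partial\widehat V_2(z_2)\}$ comes from Fact~\ref{fact:2} after lifting to the product space, and the reverse inclusion is to be obtained by realizing each limiting gradient of $U_1$ (resp.\ $V_2$) through a sequence of differentiability points of $W$ lying in the open region where $\widehat U_1$ (resp.\ $\widehat V_2$) strictly dominates. You also correctly identify the one genuine difficulty: a priori the region $\{\widehat U_1>\widehat V_2\}$ might not accumulate at $z$, in which case the sequential argument produces only one of the two sets. The problem is that your proposed resolution does not close this gap. You claim that positivity of $V_1,V_2$ away from the origin together with the sandwich bounds 1a)--1b) of Assumption~\ref{ass:InterLyapFuncSub} ``ensures the relevant non-degeneracy''; this is false. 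The bounds $\underline{\alpha}_2(|x_2|)\leq V_2(x_2)\leq\overline{\alpha}_2(|x_2|)$ are perfectly compatible with $V_2$ having a strict local minimum at some $z_2\neq 0$, in which case no nearby $x_2$ satisfies $V_2(x_2)<V_2(z_2)$ and $\{U_1(x_1)>V_2(x_2)\}$ need not accumulate at $(z_1,z_2)$. Nor can you escape by ``moving $x_1$ to make $\widehat U_1$ strictly larger'': the sequence $x_1^k\to z_1$ realizing a prescribed limiting gradient $v_1\in\partial U_1(z_1)$ is given to you and need not satisfy $U_1(x_1^k)>U_1(z_1)$.

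The missing ingredient is the \emph{decrease} condition 2b) of Assumption~\ref{ass:InterLyapFuncSub}: it makes $V_2$ a strict Lyapunov function for $\dot x_2\in F_2(0,x_2,0)$, and since solutions exist from every initial point and $V_2$ strictly decreases along them away from the origin, $V_2$ has no local minima other than $0$. Hence $z_2$ is not a local minimum (here $z_2\neq 0$, since $z\neq(0,0)$ and $U_1(z_1)=V_2(z_2)$ with $\sigma\in\cK_\infty$ in the intended application), so one can pick $x_2^\ell\to z_2$ with $V_2(x_2^\ell)<V_2(z_2)=U_1(z_1)$; pairing each $x_2^\ell$ with a tail term $x_1^{k_\ell}$ of the prescribed sequence gives $U_1(x_1^{k_\ell})>V_2(x_2^\ell)$, so that $W=\widehat U_1$ near $\overline x_\ell:=(x_1^{k_\ell},x_2^\ell)$ and $\nabla W(\overline x_\ell)=(\nabla U_1(x_1^{k_\ell}),0)\to(v_1,0)\in\partial W(z)$. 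The symmetric argument (using 2a) for $V_1$) gives $\partial\widehat V_2(z_2)\subseteq\partial W(z)$, and convexity of $\partial W(z)$ finishes the proof. Two smaller points: ``nonpathological, equivalently locally Lipschitz'' is not an equivalence (nonpathological is strictly stronger), and no measure-theoretic argument about the coincidence set $\{\widehat U_1=\widehat V_2\}$ is needed once the above sequences are in hand.
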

\begin{proof}
Consider a point $z=(z_1,z_2)\neq(0,0)$ such that $U_1(z_1)=V_2(z_2)$, the inclusion $\partial W(z)\subseteq\co \{\partial \widehat U_1(z_1), \partial \widehat V_2(z_2)\}$ is obtained by Fact \ref{fact:2}.

 For the converse inclusion, due to convexity of $\partial W(z_1,z_2)$, it suffices to show that $\partial \widehat U_1(z_1)\subseteq \partial W(z_1,z_2)$ and $\partial \widehat V_2(z_2)\subseteq \partial W(z_1,z_2)$. We only prove the first inclusion, as the other one can be proved with a similar reasoning. To prove $\partial \widehat U_1(z_1)\subseteq \partial W(z_1,z_2)$, we note that, recalling the definition of Clarke generalized gradient  \eqref{eq:limClark} and by convexity of $\partial W(z_1,z_2)$, it suffices to show that, for each sequence $x^k_1\in \Ri$ where $U_1$ is differentiable, with $x_1^k\to z_1$ and with $v_1:=\lim_{k\to \infty}\nabla U_1(x^k_1)$, we have $\hat v:=(v_1,0)^\top\in\partial W(z_1,z_2)$. 
From \emph{1b)} and \emph{2b)} of Assumption~\ref{ass:InterLyapFuncSub}, the function $V_2$ has no local minima other than $0$ because $V_2$ is a Lyapunov function for the unperturbed system $\dot x_2\in F_2(0,x_2,0)$. Thus, considering any point $x_2\neq 0$, $V_2$ is decreasing along the solutions starting at $(0,x_2)$ with zero input. By local existence of solutions from any initial point, we have that $x_2$ cannot be a local minimum of $V_2$. Thus $z_2$ is not a local minima for $V_2$, and we can consider a sequence $x^\ell_2\to z_2$ such that $V_2(z_2)>V_2(x^\ell_2)$, for all $\ell \in \N$. Now, by continuity of $U_1$ and $V_2$, for each $\ell\in \N$, there exists  $k_\ell\in \N$ such that
\begin{equation}\label{eq:proofLemmalarger}
U_1(x^{k_\ell}_1)>V_2(x^\ell_2).
\end{equation}
Consider the sequence $\overline{x}_\ell:=\begin{pmatrix}x^{k_\ell}_1\\x^\ell_2   \end{pmatrix}\in \R^n$. We have $\overline{x}_\ell\to z=(z_1,z_2)$, and from equation~\eqref{eq:proofLemmalarger}
\[
W(\overline{x}_\ell)=\max\{U_1(x^{k_\ell}_1), V_2(x^\ell_2)\}=U_1(x^{k_\ell}_1),\;\;\forall\,\ell\in \N.
\]
Thus, $W$ is differentiable at all $\overline{x}_\ell\in \R^n$ and 
\[
\lim_{\ell\to \infty} \nabla W(\overline{x}_\ell)=\begin{pmatrix}\lim_{\ell\to \infty}\nabla U_1(x_1^{k_\ell})\\0\end{pmatrix}=\begin{pmatrix}v_1\\0\end{pmatrix}=\hat v.
\]
By definition of $\hat v$ and the generalized gradient, it follows that  $\hat v\in \partial W(z_1,z_2)$ and hence $\partial \widehat U_1(z_1)\subseteq \partial W(z_1,z_2)$.
Similarly, one can prove that $\partial \widehat V_2(z_2)\subseteq \partial W(z_1,z_2)$, and thus the equality \eqref{eq:genGradMaxFun} holds.
\end{proof}
We have now all the necessary tools to present a small gain theorem involving  nonpathological ISS functions, adapting the idea firstly proposed in~\cite{JiaMarWan96}.

\begin{thm}[Generalized Small Gain Theorem]\label{thm:SmallGainThm}
Consider the nonpathological functions $V_1,V_2$ satisfying Assumption~\ref{ass:InterLyapFuncSub} and suppose that
\begin{equation}\label{eq:SamllGainEquation}
\chi_1\circ\chi_2(r)<r,\;\;\forall r>0.
\end{equation}
Considering a function $\sigma \in \cK_\infty\cap \cC^1(\R_+,\R_+)$ satisfying property~\eqref{eq:SigmaFunctionBetween} in Fact~\ref{fact:3}, define $W:\R^{n}\to \R$ as 
\begin{equation}\label{eq:MaxLyapFunc}
W(x_1,x_2):=\max\{\sigma(V_1(x_1)), V_2(x_2)\}.
\end{equation}
Then $W$ is a nonpathological ISS function and thus system~\eqref{eq:CompDiffInc} is ISS w.r.t. $u$.
\end{thm}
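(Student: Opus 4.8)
The plan is to verify that the function $W$ in \eqref{eq:MaxLyapFunc} is a nonpathological ISS-Lyapunov function for the composite inclusion \eqref{eq:CompDiffInc}, so that the claim follows from Theorem~\ref{teo:mainres} (the right-hand side $F$ of \eqref{eq:CompDiffInc}, being the product of the two subsystem maps, readily satisfies Assumption~\ref{Assump:Main} and $F(0,0)=\{0\}$). First I would check that $W$ is nonpathological. Lifting $V_1,V_2$ to $\R^n$ via $\widehat V_1(x_1,x_2):=V_1(x_1)$ and $\widehat V_2(x_1,x_2):=V_2(x_2)$, one has $\partial\widehat V_1(x)=\partial V_1(x_1)\times\{0\}$ and $\partial\widehat V_2(x)=\{0\}\times\partial V_2(x_2)$, from which $\widehat V_1,\widehat V_2$ are immediately nonpathological; then $U_1:=\sigma\circ\widehat V_1$ is nonpathological by Fact~\ref{fact:1} together with the chain rule $\frac{d}{dt}\sigma(V_1(\varphi_1(t)))=\sigma'(V_1(\varphi_1(t)))\,\frac{d}{dt}V_1(\varphi_1(t))$ provided by Proposition~\ref{prop:nonpat2}; finally $W=\max\{U_1,\widehat V_2\}$ is nonpathological by Lemma~\ref{lemma: PropNonPatFunc}. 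Next I would establish the bounds \eqref{eq:cond1} for $W$: these follow from items 1a) and 1b) of Assumption~\ref{ass:InterLyapFuncSub}, monotonicity of $\sigma$, and $\max\{|x_1|,|x_2|\}\le|x|\le\sqrt2\,\max\{|x_1|,|x_2|\}$, taking e.g. $\underline\alpha(r):=\min\{\sigma(\underline\alpha_1(r/\sqrt2)),\underline\alpha_2(r/\sqrt2)\}$ and $\overline\alpha(r):=\max\{\sigma(\overline\alpha_1(r)),\overline\alpha_2(r)\}$, both in $\cK_\infty$.

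The heart of the argument is the decrease condition \eqref{eq:cond2}, for which I would set $\gamma:=\max\{\sigma\circ\gamma_1,\gamma_2\}\in\cK$ and, fixing $z=(z_1,z_2)$ and $u$ with $W(z)>\gamma(|u|)$ (so $W(z)>0$ and $z\neq0$), split into three cases according to the sign of $\sigma(V_1(z_1))-V_2(z_2)$. If $\sigma(V_1(z_1))>V_2(z_2)$, then $W$ coincides with $U_1$ near $z$, so $\partial W(z)=\sigma'(V_1(z_1))\,(\partial V_1(z_1)\times\{0\})$ and $\dot{\overline{W}}_F(z,u)=\sigma'(V_1(z_1))\,\dot{\overline{V}}_{1,F_1}(z_1,z_2,u)$; here $W(z)=\sigma(V_1(z_1))>0$ gives $V_1(z_1)>0$, the inequality $W(z)>\gamma(|u|)\ge\sigma(\gamma_1(|u|))$ with injectivity of $\sigma$ gives $V_1(z_1)>\gamma_1(|u|)$, and $V_2(z_2)\le\sigma(V_1(z_1))$ together with $\chi_1(\sigma(r))<r$ from Fact~\ref{fact:3} gives $V_1(z_1)>\chi_1(V_2(z_2))$, so item 2a) yields $\max\dot{\overline{W}}_F(z,u)\le-\sigma'(V_1(z_1))\,\rho_1(V_1(z_1))$. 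If $\sigma(V_1(z_1))<V_2(z_2)$, then symmetrically $W$ equals $\widehat V_2$ near $z$, so $\dot{\overline{W}}_F(z,u)=\dot{\overline{V}}_{2,F_2}(z_1,z_2,u)$, and $W(z)=V_2(z_2)>\gamma(|u|)\ge\gamma_2(|u|)$ together with $\chi_2(V_1(z_1))\le\sigma(V_1(z_1))<V_2(z_2)$ lets item 2b) give $\max\dot{\overline{W}}_F(z,u)\le-\rho_2(V_2(z_2))$. Finally, if $\sigma(V_1(z_1))=V_2(z_2)$ then $z\neq0$, Lemma~\ref{lemma:technicalGradient} applies, and computing from $\partial W(z)=\co\{\partial\widehat U_1(z_1),\partial\widehat V_2(z_2)\}$ gives $\dot{\overline{W}}_F(z,u)=\bigl(\sigma'(V_1(z_1))\,\dot{\overline{V}}_{1,F_1}(z_1,z_2,u)\bigr)\cap\dot{\overline{V}}_{2,F_2}(z_1,z_2,u)$; since $W(z)=\sigma(V_1(z_1))=V_2(z_2)>\gamma(|u|)$ makes the hypotheses of \emph{both} 2a) and 2b) hold (this is exactly where the sandwich $\chi_2(r)<\sigma(r)$, $\chi_1(\sigma(r))<r$ is used), the inclusion $\dot{\overline{W}}_F(z,u)\subseteq\dot{\overline{V}}_{2,F_2}(z_1,z_2,u)$ gives $\max\dot{\overline{W}}_F(z,u)\le-\rho_2(V_2(z_2))$ (an empty intersection being covered by the convention $\max\emptyset=-\infty$).

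To package these three cases, I would set $\mu(s):=\sigma'(\sigma^{-1}(s))\,\rho_1(\sigma^{-1}(s))$, which lies in $\cPD$ ($\sigma\in\cC^1$ and $\sigma^{-1}\in\cK_\infty$ make it continuous on $[0,\infty)$ with $\mu(0)=0$, while $\sigma'>0$ makes it positive on $(0,\infty)$), and $\widehat\rho:=\min\{\mu,\rho_2\}\in\cPD$. Since $W(z)=\sigma(V_1(z_1))$ on $\{\sigma(V_1)\ge V_2\}$ and $W(z)=V_2(z_2)$ on $\{V_2\ge\sigma(V_1)\}$, the three estimates above read uniformly as $\max\dot{\overline{W}}_F(z,u)\le-\widehat\rho(W(z))$ whenever $W(z)>\gamma(|u|)$. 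Using $\underline\alpha(|x|)\le W(z)\le\overline\alpha(|x|)$, the function $\rho(r):=\min\{\widehat\rho(s)\mid\underline\alpha(r)\le s\le\overline\alpha(r)\}$ belongs to $\cPD$ and satisfies $\widehat\rho(W(z))\ge\rho(|x|)$, so \eqref{eq:cond2} holds with this $\rho$ and the $\gamma$ above. Theorem~\ref{teo:mainres} then certifies that $W$ is a nonpathological ISS-Lyapunov function for \eqref{eq:CompDiffInc}, and hence that \eqref{eq:CompDiffInc} is ISS with respect to $u$.

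The step I expect to be the main obstacle is the boundary set $\{\sigma(V_1(x_1))=V_2(x_2)\}$, where $W$ is genuinely nonsmooth: there one must invoke Lemma~\ref{lemma:technicalGradient} to describe $\partial W$, translate it into the identity $\dot{\overline{W}}_F=(\sigma'\,\dot{\overline{V}}_{1,F_1})\cap\dot{\overline{V}}_{2,F_2}$, and — crucially — verify that the strict inequalities $\chi_2(r)<\sigma(r)$ and $\chi_1(\sigma(r))<r$ built into $\sigma$ by Fact~\ref{fact:3} force \emph{both} subsystem decrease conditions to be active on this set, which is precisely where the small-gain hypothesis \eqref{eq:SamllGainEquation} enters. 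The final passage from the $W$-dependent decay bound to the $|x|$-dependent $\cPD$ bound required by \eqref{eq:cond2} is routine, in the spirit of Remark~\ref{rmk:equiv}.
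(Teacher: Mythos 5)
Your proof is correct and follows the paper's argument in all essentials: the same verification of non-pathology via Fact~\ref{fact:1} and Lemma~\ref{lemma: PropNonPatFunc}, the same three-way decomposition into $\{\sigma(V_1)>V_2\}$, $\{\sigma(V_1)<V_2\}$ and the boundary set $\Gamma$, and the same reliance on Lemma~\ref{lemma:technicalGradient} and the sandwich property~\eqref{eq:SigmaFunctionBetween} to activate both subsystem decrease conditions on $\Gamma$. The one place you diverge is the computation on $\Gamma$: the paper introduces the sets $F^W$, $F_1^{V_1}$, $F_2^{V_2}$, proves the inclusion~\eqref{eq:ClaimInclusion}, and estimates $\inp{w}{f}$ by a convex-combination argument, landing on the bound $-\min\{\widehat\rho_1(W(z)),\rho_2(W(z))\}$, whereas you exploit the product structure of $F$ to derive the exact identity $\dot{\overline{W}}_{F}(z,u)=\bigl(\sigma'(V_1(z_1))\,\dot{\overline{V}}_{1,F_1}(z_1,z_2,u)\bigr)\cap\dot{\overline{V}}_{2,F_2}(z_1,z_2,u)$, from which the inclusion into $\dot{\overline{V}}_{2,F_2}$ immediately yields $-\rho_2(W(z))$ (the convention $\max\emptyset=-\infty$ covering an empty intersection). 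This identity is valid — both inclusions follow from testing $w$ on the two extreme faces of $\co\{\partial\widehat U_1(z_1),\partial\widehat V_2(z_2)\}$ and recombining convexly — and it gives a marginally sharper decay estimate while bypassing the paper's claim; your explicit verification of the bounds~\eqref{eq:cond1} for $W$ and the final conversion from a $W$-dependent to an $|x|$-dependent decay rate fill in details the paper leaves implicit, and are handled correctly.
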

\begin{proof}
We want to show that $W:\R^n \to \R$ satisfies all the conditions of Theorem~\ref{teo:mainres}. To this end, it is enough to show that
\begin{enumerate}
	\item[A)] $\sigma\circ V_1:\Ri\to\R$ is nonpathological, and $W:\R^n\to\R$ is nonpathological.
	\item[B)]There exist $\rho\in \cPD$ and $\gamma\in \cK$ such that
	\begin{equation}\label{eq:SmallGainEqToProve}
W(x)> \gamma(|u|)\;\;\Rightarrow\;\;\max  \dot{\overline{W}}_{F}(x,u)\leq -\rho(|x|).
	\end{equation}
\end{enumerate}

\emph{Proof of A):} We recall that $V_1:\Ri\to\R$ is nonpathological and $\sigma \in \cC^1(\R,\R)$ and $\sigma'(r)>0$ for all $r>0$. Defining $U_1:=\sigma \circ V_1$, by Fact~\ref{fact:1}  we have $\partial U_1(x)=\sigma'(V_1(x))\partial V_1(x)$ for all $x\in \Ri$. Moreover for any absolutely continuous function $\varphi:[0,T)\to \Ri$, by Definition~\ref{def:nonpat} we have that $\partial V_1(\varphi(t))$ is  a subset of an affine subspace orthogonal to $\dot \varphi(t)$, for almost every $t\in [0,T)$, and the same holds for $\partial U_1(\varphi(t))=\sigma'(V_1(\varphi(t))\partial V_1(\varphi(t))$. Thus $U_1:\Ri\to \R$ is nonpathological.
The non-pathology of $W:\R^n\to \R$ follows from the fact that pointwise maximum of nonpathological functions is nonpathological, as stated in Lemma~\ref{lemma: PropNonPatFunc}.\\
\emph{Proof of B):} We proceed  by considering three cases. Let us define the sets
\[
\begin{aligned}
\cO_1:=&\{(x_1,x_2)\in \R^n\;\vert\; V_2(x_2)<\sigma(V_1(x_1))\},\\
\cO_2:=&\{(x_1,x_2)\in \R^n\;\vert\; V_2(x_2)>\sigma(V_1(x_1))\},\\
\Gamma:=&\{(x_1,x_2)\in \R^n\;\vert\; V_2(x_2)=\sigma(V_1(x_1))\}.
\end{aligned}
\]
For $z=(z_1,z_2)\in \cO_1$, by continuity there exists a neighborhood $\cU$ of $z$ where $W(x)=\sigma(V_1(x_1))$, for all $x=(x_1,x_2)\in \cU$. By Fact \ref{fact:1}, we have that $\partial W(z)=\sigma'(V(z_1))\partial V_1(z_1)\times \{0\}$.
Thus $f=(f_1,f_2)\in F(z,u)$ is such that $\inp{p}{f}=a, \, \forall p \in \partial W(z)$ if and only if $f_1$ satisfies $\inp{p_1}{f_1}=a$, $\forall p_1\in \sigma'(V(z_1))\partial V_1(z_1)$. From~\eqref{eq:PPFLieDerivative}, we thus get
\begin{equation}\label{eq:WdotF_1}
\dot{\overline{W}}_{F}(z,u)=\sigma'(V(z_1))\dot {\overline{V}}_{1,F_1}(z_1,z_2,u).
\end{equation}%
Recalling that $z\in \cO_1$ and equation~\eqref{eq:SigmaFunctionBetween}, we have $\chi_1(V_2(z_2))<\chi_1(\sigma(V_1(z_1)))<V_1(z_1)$. Thus, by condition \emph{2a)} of Assumption~\ref{ass:InterLyapFuncSub}, we have from~\eqref{eq:WdotF_1} that
\begin{equation}\label{eq:IssCond1}
W(z)> \widehat{\gamma}_1(|u|) \Rightarrow \max\dot{\overline{W}}_{F}(z,u)\leq -\widehat{\rho}_1(W(z)),\;\forall z\in \cO_1,
\end{equation}
where $\widehat{\rho}_1(s):=\sigma'(\sigma^{-1}(s))\,\rho_1(\sigma^{-1}(s))$ is a positive definite function and $\widehat{\gamma}_1(s):=\sigma(\gamma_1(s))$ is class $\cK$.\\\smallskip
For $z=(z_1,z_2)\in \cO_2$, following the same reasoning (but without the complications introduced by $\sigma$), one has that
\begin{equation}\label{eq:IssCond2}
W(z)> \gamma_2(|u|) \Rightarrow\max\dot{\overline{W}}_{F}(z,u)\leq -\rho_2(W(z)),\;\forall z\in \cO_2.
\end{equation}
Before addressing $z=(z_1,z_2)\in \Gamma$, using an idea proposed in~\cite{kamalapurkar17}, we introduce the following notation motivated by definition~\eqref{eq:LieWithInput}: Given $F:\R^n\times \R^m \rightrightarrows \R^n$ and a locally Lipschitz function $V:\R^n\to \R$ we define
\[
F^V(z,u):=\left\{f\in F(z,u)\vert \exists a \in \R\,:\inp{v}{f}=a,\,\forall v\in \partial V(z)\right\}.
\]
By Definition~\ref{def:geneder}, it is clear that 
\begin{equation}\label{eq:LieNewProofofSmallGain}
\dot{\overline{V}}_{F}(z,u)=\left\{\inp{v}{f}\;\vert\; v\in \partial V(z),f\in F^V(z,u)\right\}.
\end{equation}
We continue by using the following set inclusion whose proof is postponed a few lines to avoid breaking the flow of the exposition:
\begin{equation}\label{eq:ClaimInclusion}
F^W(z,u)\subseteq F_1^{V_1}(z_1,z_2,u)\times F_2^{V_2}(z_1,z_2,u).
\end{equation}

Consider $z=(z_1,z_2)^\top\in \Gamma$ and take any $w\in \partial W(z)$, by Lemma \ref{lemma:technicalGradient}, there exist $v_1\in \partial V_1(z_1)$, $v_2\in \partial V_2(z_2)$ and $\lambda\in [0,1]$ such that
\[
w=\begin{pmatrix}
\lambda \sigma'(V_1(z_1))v_1\\(1-\lambda)v_2
\end{pmatrix}.
\] 
Consider $f =\begin{pmatrix} f_1\\ f_2\end{pmatrix}\in F^{W}(z,u)$, so that, from~\eqref{eq:ClaimInclusion}, $f_1\in F_1^{V_1}(z_1,z_2,u)$ and $f_2\in  F_2^{V_2}(z_1,z_2,u)$. Using~\eqref{eq:LieNewProofofSmallGain}, we may proceed as in~\eqref{eq:IssCond1} and~\eqref{eq:IssCond2} and use continuity of $W$ 
to get, for all $z\in \Gamma=\bd(\cO_1)\cap\bd(\cO_2)$
\begin{equation}\label{eq:NewLuca}
\begin{aligned}
W(z)> \widehat{\gamma}_1(|u|) &\Rightarrow\hskip-0.15cm \max_{\substack{f_1\in F_1^{V_1}(z,u)\\v_1\in\partial V_1(z_1)}}\hskip-0.15cm\sigma'(V_1(z_1))\inp{v_1}{f_1}\leq -\widehat{\rho}_1(W(z))\\
W(z)> \gamma_2(|u|) &\Rightarrow \max_{\substack{f_2\in F_2^{V_2}(z,u)\\v_2\in\partial V_2(z_2)}}\inp{v_2}{f_2}\leq -\rho_2(W(z)).
\end{aligned}
\end{equation}
Using~\eqref{eq:NewLuca} we finally get that $W(z)> \max\{\widehat{\gamma}_1(|u|), \gamma_2(|u|)\}$ implies
\[
\begin{aligned}
\inp{w}{f}&=\lambda\sigma'(V_1(z_1))\inp{v_1}{f_1}+(1-\lambda)\inp{v_2}{f_2}\\
&\leq-\lambda \widehat{\rho}_1(W(z))-(1-\lambda)\rho_2(W(z))\\
&\leq-\min\{\widehat{\rho}_1(W(z)), \rho_2(W(z))\}.
\end{aligned}
\]
Thus, letting $\gamma(s):=\max\{\widehat{\gamma}_1(s), \gamma_2(s)\}$ and $\rho(s):=\min\{\widehat{\rho}_1(s),\rho_2(s)\}$, we have
\begin{equation}\label{eq:ISScond3}
W(z)> \gamma(|u|)\;\Rightarrow \max\dot{\overline{W}}_{F}(z,u)\leq- {\rho}(W(z)),\;\forall z\in \Gamma.
\end{equation}
Collecting \eqref{eq:IssCond1}, \eqref{eq:IssCond2} and \eqref{eq:ISScond3} we can conclude \eqref{eq:SmallGainEqToProve}, and prove item B).

We complete the proof by proving~\eqref{eq:ClaimInclusion}. To this, take any $f\in F^W(z,u)$. By definition of $F$ in~\eqref{eq:CompDiffInc}, we have that $f=\begin{pmatrix} f_1 \\ f_2\end{pmatrix}$ for some $f_1\in F_1(z_1,z_2,u)$ and $f_2\in F_2(z_1,z_2,u)$. By Lemma \ref{lemma:technicalGradient} and Fact \ref{fact:1}, for any $v_1\in \partial V_1(z_1)$, the vector $w=\begin{pmatrix} \sigma'(V_1(z_1))v_1\\0 \end{pmatrix}\in \partial W(z)$ and thus 
\[
\inp{w}{f}=\sigma'(V_1(z_1))\inp{v_1}{f_1}.
\]
By varying $v_1$ in $\partial V_1(z_1)$ and recalling that $f\in F^W(z,u)$ (and thus $\inp{w}{f}$ is constant for all $w\in \partial W(z)$), we obtain that $f_1\in F_1^{V_1}(z_1,z_2,u)$, that is
$
\inp{v_1}{f_1}\;\;\text{constant w.r.t. } v_1\in \partial V_1(z_1).
$ 
The same reasoning applies to $f_2$, considering a vector $w=\begin{pmatrix}0\\v_2\end{pmatrix}\in \partial W(z)$, with $v_2\in\partial V_2(z_2)$, concluding the proof of the claim.
\end{proof}
\begin{oss}
The idea of analyzing the derivative of the composite function $W$ in the three sets $\cO_1,\cO_2,\Gamma$, appeared firstly in \cite{JiaMarWan96}, and is the common idea of many results on small-gain theorems for interconnected systems, see for example \cite{LibNes14} or \cite{Ito19}. The analysis in $\cO_1$ and $\cO_2$ was straightforward, but because of non-differentiability of $V_1$ and $V_2$, the analysis in the set $\Gamma$ is different from \cite{JiaMarWan96}. In particular  the additional tools from nonsmooth analysis have been used to study the Lie-derivative of $W$ along $F$ on the set $\Gamma$. \hfill $\triangle$

\end{oss}
\subsection{Cascade System}
We now apply Theorem \ref{teo:mainres} to \emph{cascade interconnections} as in Figure~\ref{fig:CascadeDiagram}.
More precisely, given two maps $F_1: \R^{n_1}\times \R^{n_2}\times \R^m\rightrightarrows \R^{n_1}$, and $F_2: \R^{n_2}\times \R^{m}\rightrightarrows \R^{n_2}$  we consider the \emph{cascade} system defined as follow: 
\begin{subequations}\label{eq:cascadesys}
\begin{align}
 \dot x_1 &\in F_1(x_1,x_2,u),\label{eq:cascade1} \\
\dot x_2 &\in F_2(x_2,\, u).\label{eq:cascade2}
\end{align}
\end{subequations}
Defining again $n:=n_1+n_2$ we will write $ F: \R^n\times\R^{m}\rightrightarrows \R^n$ defined by 
\[
F(x_1,x_2,u):=\begin{pmatrix}
F_1(x_1,x_2,u)\\
F_2(x_2,\,u)
\end{pmatrix}.
\]
\begin{figure}
\begin{center}
\includegraphics[scale=1]{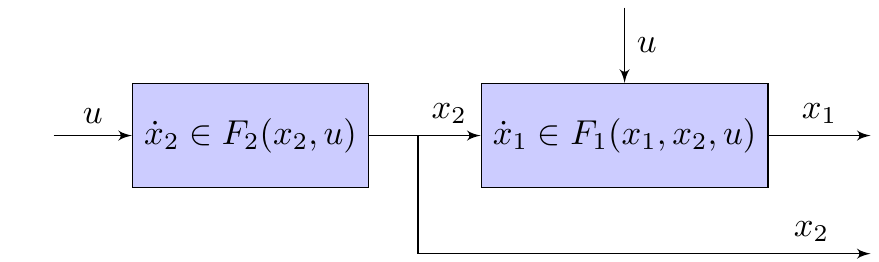}
\caption{The cascade system in \eqref{eq:cascadesys}.}
\label{fig:CascadeDiagram}
\end{center}
\end{figure}

The cascade system \eqref{eq:cascadesys} can be seen as a system of the form \eqref{eq:CompDiffInc}  where  $F_2$ does not depend on $x_1$, see also Figure~\ref{fig:CascadeDiagram}. Therefore Theorem \ref{thm:SmallGainThm} can be applied with $\chi_1\equiv 0$ and condition~\eqref{eq:SamllGainEquation} holds for any $\chi_2\in \cK$. On the other hand, the cascade structure allows us to construct a different ISS-Lyapunov function, based on two non-smooth ISS-Lyapunov functions associated with each subsystem. The function that we construct is in the so-called \emph{sum-separable} form, that has some clear advantages with respect to the \emph{max-separable} form~\eqref{eq:MaxLyapFunc} in Theorem~\ref{thm:SmallGainThm}, see~\cite{ItoJin13} and references therein for a thorough discussion. In particular, the sum-separable architecture preserves regularity, and in our setting also leads to a more direct proof of ISS of the cascade interconnection.

 In the following we adapt, in the framework of differential inclusions and nonpathological functions,  the proof technique proposed firstly in~\cite{SonTeel95}. More specifically, we assume that both subsystems admit an ISS Lyapunov function, using the formulation~\eqref{eq:cond2eqival} in Remark~\ref{rmk:equiv}. Similar constructions can be found in~\cite{TanTeel15} and~\cite{tanwani2018}.
\begin{assumption}\label{ass:casc}
The following conditions hold for system~\eqref{eq:cascadesys}:
\begin{enumerate}[leftmargin=*,label=\textbf{(A.\arabic*)}]
\item \label{Item:CascadeA.1} \emph{ISS of \eqref{eq:cascade2}}. There exist a nonpathological function $V_2: \R^{n_2} \to \R$ and $\underline \alpha_2$ $\overline \alpha_2$, $\rho_2\in\mathcal{K}_\infty$ and $\gamma_2\in \cK$ satisfying
\begin{align*}
\underline \alpha_2(|x_2|)&\leq V_2(x_2) \leq \overline \alpha_2(|x_2|), \hskip0.4cm \forall x_2 \in \R^{n_2}, \\
\max \dot{\overline{V}}_{2,F_2}(x_2,u) &\leq -\rho_2(V_2(x_2))+\gamma_2(|u|), \;
\end{align*}
for all $x_2 \in \R^{n_2}$ and for all $u\in \R^{m}$.
\item \label{Item:cascadeA.2} \emph{ISS of \eqref{eq:cascade1}.} There exist a nonpathological function $V_1: \R^{n_1} \to \R$ and $\underline \alpha_1$ $\overline\alpha_1$, $\rho_1$, $\gamma_1 \in \cK_\infty$  satisfying
\begin{equation*}
\begin{aligned}
&\underline\alpha_1(|x_1|)\leq V_1(x_1) \leq \overline{\alpha}_1(|x_1|), \hskip0.4cm \forall x_1 \in \R^{n_1}, \\
&\begin{aligned}\max \dot{\overline{V}}_{1,F_1}(x_1,x_2,u) \leq  &-\rho_1(V_1(x_1))\\&+\gamma_1(V_2(x_2))+\gamma_2(|u|),\end{aligned}
\end{aligned}
\end{equation*}
for all $x_1 \in \R^{n_1}$, $x_2\in \R^{n_2}$ and all $u\in \R^m$.
\item \label{Item:CascadeA.3} Defining $\overline{\nu}(s):=\gamma_1(s)/\rho_2(s)$, there exists a scalar $M>0$ such that 
\[
\lim_{s\to 0^+}\overline{\nu}(s)=\lim_{s\to0^+}\frac{\gamma_1(s)}{\rho_2(s)} \leq M.
\]
In other words, $\gamma_1(s) \in O(\rho_2(s))$ as $s \to 0^+$.\hfill $\triangle$
\end{enumerate}
\end{assumption}

\begin{oss}[Tightness of Assumption \ref{ass:casc}] 
Condition~\ref{Item:CascadeA.3}, which is used in the construction of $W$ in the proof of Proposition \ref{prop:cascprop}, is not restrictive: if it does not hold it is possible to modify the function $V_1$ in such a way that it holds, following the same idea proposed in \cite{SonTeel95}. Due to this fact, Proposition~\ref{prop:cascprop} establishes that when system~\eqref{eq:InterconnectedDiffInc} is in the cascade-form presented in equation~\eqref{eq:cascadesys}, it suffices to have ISS-Lyapunov functions (satisfying the Lie-derivative conditions presented in \ref{Item:CascadeA.1} and \ref{Item:cascadeA.2} ) for each subsystem, to conclude ISS of the interconnected system. In this context, the \emph{small gain condition} required in the general construction of Theorem~\ref{thm:SmallGainThm} is somehow trivially satisfied.\hfill $\triangle$
\end{oss}
Using Assumption~\ref{ass:casc}, we can construct a nonpathological Lyapunov function for the cascade system \eqref{eq:cascadesys}, by adapting a Lyapunov design developed in~\cite{SonTeel95} and~\cite{tanwani2018}.
\begin{prop}\label{prop:cascprop}
Consider the cascade system \eqref{eq:cascadesys}, and suppose that Assumption~\ref{ass:casc} holds. There exists a continuous and nondecreasing function  $\nu:\R_+\to\R_+$ satisfying $\nu(s)\geq 4 \overline{\nu}(s)$, for all $s \in \R_+$. Moreover, the function
\begin{equation}
 W(x_1,x_2):=\int_{0}^{V_2(x_2)} \nu(s)ds +V_1(x)
\end{equation}
is a nonpathological ISS functions for system~\eqref{eq:cascadesys}; that is there exist $\underline{\alpha}, \overline{\alpha}\in \mathcal{K}_\infty$ such that
\begin{equation}\label{eq:posdefcasc}
\underline{\alpha}(|(x_1,x_2)|)\leq  W(x_1,x_2)\leq \overline{\alpha}(|(x_1,x_2)|)
\end{equation}
for all $(x_1,x_2)\in \R^{n_1}\times \R^{n_2}$, and there exist $\rho\in\mathcal{K}_\infty$ and $\gamma\in \cK$ such that
\begin{equation}\label{eq:casclyap}
\max \dot{\overline{W}}_{ F}(x_1,x_2,u)\leq - \rho(W(x_1,x_2))+ \gamma(|u|),
\end{equation}
for all $(x_1,x_2)\in \R^{n_1}\times \R^{n_2}$ and for all $u \in \R^m$. 
\end{prop}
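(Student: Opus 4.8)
The strategy is to verify that $W(x_1,x_2) = \int_0^{V_2(x_2)}\nu(s)\,ds + V_1(x_1)$ satisfies the hypotheses of Theorem~\ref{teo:mainres}, namely the sandwich bound~\eqref{eq:posdefcasc} and the decay estimate~\eqref{eq:casclyap} (which, by Remark~\ref{rmk:equiv}, is an equivalent form of condition~\eqref{eq:cond2}). First I would construct $\nu$: starting from $\overline\nu(s)=\gamma_1(s)/\rho_2(s)$, which by~\ref{Item:CascadeA.3} is bounded near $0$ and is continuous (hence locally bounded) on $(0,\infty)$ since $\rho_2\in\cK_\infty$ is positive away from zero, I would take $\nu(s):=4\sup_{0\le t\le s}\overline\nu(t)$ (truncated to be finite), which is continuous, nondecreasing, and satisfies $\nu(s)\ge 4\overline\nu(s)$ everywhere. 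For the sandwich bound, note $\Phi(r):=\int_0^r\nu(s)\,ds$ is continuous, strictly increasing (since $\nu>0$, as $\gamma_1>0$ forces $\overline\nu>0$) and $\Phi(0)=0$; so $\Phi\circ V_2$ is bounded above and below by $\cK_\infty$ functions of $|x_2|$ via~\ref{Item:CascadeA.1}, and together with the bounds on $V_1$ from~\ref{Item:cascadeA.2} one gets~\eqref{eq:posdefcasc} by the standard fact that $\alpha_1(|x_1|)+\alpha_2(|x_2|)$ is sandwiched between $\cK_\infty$ functions of $|(x_1,x_2)|$.

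Next, the nonpathological property: $\Phi\in\cC^1$ with $\Phi'=\nu$, so by Fact~\ref{fact:1}, $\Phi\circ V_2$ is nonpathological (the same affine-subspace argument as in part A) of the proof of Theorem~\ref{thm:SmallGainThm}); since $V_1$ is nonpathological and the class is closed under addition (Lemma~\ref{lemma: PropNonPatFunc}), $W$ is nonpathological, and the Clarke gradient decomposes as $\partial W(x)=\{(p_1,\nu(V_2(x_2))p_2)^\top \mid p_1\in\partial V_1(x_1),\ p_2\in\partial V_2(x_2)\}$ (product structure, since the two summands depend on disjoint variable blocks). Now I would estimate $\max\dot{\overline W}_F(x,u)$. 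For $a\in\dot{\overline W}_F(x,u)$ there is $f=(f_1,f_2)\in F(x,u)=F_1(x_1,x_2,u)\times F_2(x_2,u)$ with $\inp{p}{f}=a$ for all $p\in\partial W(x)$; in particular, varying $p_1$ and $p_2$ separately (as in the claim~\eqref{eq:ClaimInclusion}), $\inp{p_1}{f_1}$ is constant in $p_1\in\partial V_1(x_1)$ and $\inp{p_2}{f_2}$ is constant in $p_2\in\partial V_2(x_2)$, so $f_1\in F_1^{V_1}$, $f_2\in F_2^{V_2}$, and $a = \inp{p_1}{f_1} + \nu(V_2(x_2))\inp{p_2}{f_2}$ with these two terms lying in $\dot{\overline V}_{1,F_1}(x_1,x_2,u)$ and $\nu(V_2(x_2))\dot{\overline V}_{2,F_2}(x_2,u)$ respectively. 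Applying the bounds in~\ref{Item:cascadeA.2} and~\ref{Item:CascadeA.1},
\[
a \le -\rho_1(V_1(x_1)) + \gamma_1(V_2(x_2)) + \gamma_2(|u|) + \nu(V_2(x_2))\bigl(-\rho_2(V_2(x_2)) + \gamma_2(|u|)\bigr).
\]
The cascade cancellation is the crux: since $\nu(V_2(x_2))\rho_2(V_2(x_2)) \ge 4\gamma_1(V_2(x_2))$, the $\gamma_1$ term is absorbed with margin, giving $a \le -\rho_1(V_1(x_1)) - 3\gamma_1(V_2(x_2)) + (1+\nu(V_2(x_2)))\gamma_2(|u|)$; dropping the negative $\gamma_1$ term yields $a \le -\rho_1(V_1(x_1)) + (1+\nu(V_2(x_2)))\gamma_2(|u|)$.

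The remaining — and genuinely delicate — step is to convert this into the clean form~\eqref{eq:casclyap}, i.e. $-\rho(W(x))+\gamma(|u|)$ with $\rho\in\cK_\infty$ and $\gamma\in\cK$; the obstacle is twofold. First, $\rho_1(V_1(x_1))$ must be bounded below by a $\cK_\infty$ function of $W(x)=\Phi(V_2(x_2))+V_1(x_1)$; this requires keeping a fraction of the discarded $-\gamma_1(V_2(x_2)) \lesssim -\Phi$-type decay to dominate the $V_2$-contribution to $W$, which I would arrange by retaining, say, $-\rho_1(V_1(x_1)) - \gamma_1(V_2(x_2))$ and using that $\gamma_1(s)$ dominates $\Phi(s)=\int_0^s\nu$ up to a rescaling near infinity (handled by a separate comparison argument, or by the standard trick of replacing $\nu$ with $\min\{\nu,\,$const$\}$ on bounded sets and invoking properness of $V_1$). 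Second, the input coefficient $1+\nu(V_2(x_2))$ is state-dependent and potentially unbounded, so one cannot directly read off a $\cK$ function $\gamma(|u|)$; the fix, following~\cite{SonTeel95}, is the conditional/small-gain-style argument — restrict attention to the region $W(x) > \gamma(|u|)$ for a suitable $\gamma\in\cK$, on which $V_2(x_2)$ (hence $\nu(V_2(x_2))$) is controlled by $|u|$, making $(1+\nu(V_2(x_2)))\gamma_2(|u|)$ a genuine $\cK$ function of $|u|$ absorbed by half of the decay rate — and then invoke the equivalence in Remark~\ref{rmk:equiv} to pass back to~\eqref{eq:casclyap}. Once these bookkeeping steps produce $\rho\in\cK_\infty$ and $\gamma\in\cK$, Theorem~\ref{teo:mainres} applies and the cascade~\eqref{eq:cascadesys} is ISS, with $W$ the claimed nonpathological ISS Lyapunov function.
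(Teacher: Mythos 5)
Your construction of $\nu$, the sandwich bounds, the nonpathology of $W$, and the additive decomposition of the Lie derivative (giving $\max\dot{\overline{W}}_{F}\le\max\dot{\overline{V}}_{1,F_1}+\nu(V_2(x_2))\max\dot{\overline{V}}_{2,F_2}$) all match the paper's argument. The gap is in the step you yourself flag as delicate: taming the state-dependent input coefficient $\bigl(1+\nu(V_2(x_2))\bigr)\gamma_2(|u|)$. The mechanism you describe --- restrict to the region $W(x)>\gamma(|u|)$, ``on which $V_2(x_2)$ is controlled by $|u|$'' --- is backwards: $W(x)>\gamma(|u|)$ gives no upper bound on $V_2(x_2)$ at all (take $V_1(x_1)=0$ and $V_2(x_2)$ arbitrarily large). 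The correct split, which is how the paper establishes \eqref{eq:cascineq2}, is on whether $\gamma_2(|u|)\le\tfrac12\rho_2(V_2(x_2))$. When this holds, $\nu(V_2)\gamma_2(|u|)$ is absorbed by a \emph{reserved} half of the decay $-\nu(V_2)\rho_2(V_2)$; when it fails, $V_2(x_2)\le\theta(|u|):=\rho_2^{-1}(2\gamma_2(|u|))$ and monotonicity of $\nu$ gives $\nu(V_2)\gamma_2(|u|)\le\nu(\theta(|u|))\gamma_2(|u|)$, a genuine class-$\cK$ function of $|u|$. Note that your displayed chain has already spent the entire $-\nu(V_2)\rho_2(V_2)$ on absorbing $\gamma_1(V_2)$ to produce $-3\gamma_1(V_2)$, and $3\gamma_1(V_2)$ need not dominate $\nu(V_2)\gamma_2(|u|)$ (since $\nu$ may greatly exceed $4\overline\nu$ after taking the nondecreasing envelope); you must split $-\nu\rho_2=-\tfrac12\nu\rho_2-\tfrac12\nu\rho_2$ and use one half for each purpose, the first half still yielding $-2\gamma_1(V_2)$, which suffices.

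Your first ``obstacle'' is, by contrast, a non-issue, and the claim you invoke to resolve it --- that $\gamma_1(s)$ dominates $\Phi(s)=\int_0^s\nu(r)dr$ up to rescaling --- is false in general (if $\rho_2$ is small then $\nu$, hence $\Phi$, grows much faster than $\gamma_1$) and is not needed. To lower-bound $\rho_1(V_1)+\gamma_1(V_2)$ by a $\cK_\infty$ function of $W=\ell(V_2)+V_1$, observe that either $V_1\ge W/2$ or $\ell(V_2)\ge W/2$; the latter gives $V_2\ge\ell^{-1}(W/2)$, hence $\rho_1(V_1)+\gamma_1(V_2)\ge\min\{\rho_1(W/2),\,\gamma_1(\ell^{-1}(W/2))\}$, and $\gamma_1\circ\ell^{-1}$ is $\cK_\infty$ because $\ell\in\cK_\infty$ ($\ell$ is continuous, strictly increasing, and unbounded since $\nu$ is nondecreasing and positive on $(0,\infty)$). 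With these two repairs your outline coincides with the paper's proof; as written, the final conversion to the form $-\rho(W)+\gamma(|u|)$ does not go through.
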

\begin{proof}
The existence of function $\nu:\R_+\to \R_+$ under~\ref{Item:CascadeA.3} of Assumption~\ref{ass:casc} is established in~\cite[Lemmas 1 and 2]{SonTeel95}. Introduce the function $\ell :\R_+ \to \R_+$ defined by
\begin{equation*}
\ell(s):=\int_0^{s} \nu(r)dr,\;\;\;\forall\;s\in \R_+.
\end{equation*}
Since $\nu(s)\geq 4\overline \nu(s)>0$, $\forall s>0$, then $\ell$ is a class $\mathcal{K}_\infty$ function. Moreover $\ell\in \cC^1(\R_+,\R_+)$. We can thus rewrite
\[
 W(x_1,x_2)=\ell \circ V_2(x_2)+ V_1(x_1).
\]
Non-pathology of $W$ follows from Proposition~\ref{lemma:propertiesofppf} and Fact~\ref{fact:1} since $\ell$ is $\cC^1$ by construction.
Moreover, the functions $\underline{\alpha}$ and $\overline{\alpha}$ of equation \eqref{eq:posdefcasc} are easily constructed as $\underline{\alpha}(s):=\int_0^{\underline \alpha_2(s)}\nu(r)dr+\underline \alpha_1(s)$ and $\overline{\alpha}(s):=\int_0^{\overline \alpha_2(s)}\nu(r)dr+\overline \alpha_1(s)$.\\
Let us now define $U_2:=\ell \circ V_2$; noting that $\ell'(s)=\nu(s)$ and using Fact~\ref{fact:1}, we have  that
$
\dot{\overline{U}}_{2,F_2}(x_2,u)=\nu(V_2(x_2)) \dot{\overline{V}}_{2,F_2}(x_2,u).
$
Recalling \ref{Item:CascadeA.1}, we can write
\begin{equation}\label{eq:CascadePrelimEq1}
\begin{aligned}
\max \dot{\overline{U}}_{2,F_2}(x_2,u)\leq&-\nu(V_2(x_2))\rho_2(V_2(x_2))\\&+\nu(V_2(x_2))\gamma_2(|u|), \;\;
\end{aligned}
\end{equation}
$\forall\,x_2 \in \R^{n_2}, \;\forall\,u \in \R^m$. Defining $\theta(s):=\rho_2^{-1}(2 \gamma_2(s))$, we prove the following inequality
\begin{equation}\label{eq:cascineq2}
\begin{aligned}
\max \dot{\overline{U}}_{2,F_2}(x_2,u)\leq &-\frac{1}{2} \nu(V_2(x_2))\rho_2(V_2(x_2))\\&+\nu(\theta(|u|))\gamma_2(|u|).
\end{aligned}
\end{equation}
Indeed, by~\eqref{eq:CascadePrelimEq1}, if $\gamma_2(|u|) \leq \frac{1}{2}\rho_2(V_2(x_2))$, \eqref{eq:cascineq2} trivially holds. Otherwise, we see that
\[ 
\gamma_2(|u|) \geq \frac{1}{2}\rho_2(V_2(x_2)) \Leftrightarrow V_2(x_2) \leq \rho_2^{-1}(2 \gamma_2(|u|))=\theta(|u|),
\]
and by the nondecreasing property of $\nu$, inequality \eqref{eq:cascineq2} holds.
Before proceeding to proving~\eqref{eq:casclyap} we observe the following equality 
\begin{equation}\label{eq:charater}
\dot{\overline{W}}_{F}(x_1,x_2,u)=\dot{\overline{U}}_{2,F_2}(x_2,u)+\dot{\overline{V}}_{1,F_1}(x_1,x_2).
\end{equation}
To show~\eqref{eq:charater}, we recall that any locally Lipschitz function $G: \R^{n_1}\times \R^{n_2} \to \R$ defined by $G(x_1,x_2)=G_1(x_1)+G_2(x_2)$, satisfies
\begin{equation}
\partial G(x_1,x_2)= \left \{ \begin{pmatrix}
v_1\\
v_2
\end{pmatrix}
  \vert\, v_1 \in \partial G_1(x_1), v_2 \in \partial G_2(x_2) \right \}
\end{equation}
and thus, using definition~\eqref{eq:LieWithInput}, we obtain~\eqref{eq:charater}.
From~\ref{Item:cascadeA.2},~\eqref{eq:cascineq2} and~\eqref{eq:charater}, we have
\begin{equation*}
\begin{aligned}
\hskip-0.05cm\max \dot{\overline{W}}_{ F}(x_1,x_2,u)&\leq -\rho_1(V_1(x_1))+\gamma_1(V_2(x_2))+\gamma_2(|u|)\\&\hskip-0.1cm-\frac{1}{2} \nu(V_2(x_2))\rho_2(V_2(x_2))\hskip-0.1cm+\hskip-0.1cm\nu(\theta(|u|))\gamma_2(|u|).
\end{aligned}
\end{equation*}
From the assumption $\nu(s) \geq 4\frac{\gamma_1(s)}{\rho_2(s)}$ for all $s\in \R_+$,  and following \cite[Lemma 10]{Kellett2014}, we finally  have
\[
\begin{aligned}
\max \dot{\overline{W}}_{F}(x_1,x_2)&\leq -\gamma_1(V_2(x_2))-\rho_1(V_1(x_1))\\&\;\;\;\;+\nu(\theta(|u|))\gamma_2(|u|)+\gamma_2(|u|)\\
&\leq- \rho(W(x_1,x_2))+\gamma(|u|),
\end{aligned}
\]
where we have defined
\[
\begin{aligned}
\gamma(s)&:=\left(\nu(\theta(s))+1\right)\gamma_2(s),\\
\rho(s)&:=\min \left\{\rho_1(\frac{1}{2}s), \gamma_1(\frac{1}{2}\ell^{-1}(s))\right \}.
\end{aligned}
\]
\end{proof}

\section{Feedback Stabilization of a 2-mode State-dependent Switched System}\label{sec:appStab}
We now use the tools developed in the previous section to solve an output feedback stabilization problem for a class of switched systems with two modes. In particular, we consider the state dependent switched system defined as
\begin{equation}\label{eq:two-modeSystem}
\cS:\begin{cases}
\dot x=\begin{cases}
f_1(x)+g(x)u\;\;\text{if}\,x\in X_1:=\{q(x)\geq 0\},\\
f_2(x)+g(x)u\;\;\text{if}\,x\in X_2:=\{q(x)\leq0\},\\
\end{cases}\\
y=h(x),
\end{cases}
\end{equation}
where $x\in \R^n$ and $u\in \R^m$. The basic assumptions we impose on the system~\eqref{eq:two-modeSystem} are the following:

\begin{assumption}\label{ass:mainassumpCONTr}
The data in~\eqref{eq:two-modeSystem} is such that
\begin{enumerate}[leftmargin=*,label=\alph*)]
\item $f_1,f_2\in \cC^1(\R^n,\R^n)$;
\item $g\in \cC^1(\R^n, \R^{n\times m})$;
\item $h\in \cC^1(\R^n,\R^p)$;
\item $q\in \cC^1(\R^n,\R)$ and $\cX=\{X_i,\R^n\}_{i\in \{1,2\}}$ form a proper partition of $\R^n$ (recall Definition~\ref{def:Ncov});\label{item:conditionsPartition}
\item $q(0)\geq0\;\Rightarrow f_1(0)=0$ and $q(0)\leq0\;\Rightarrow f_2(0)=0$;
\item There exists $\beta_g\in \cC(\R,\R_+)$, $\beta_g(s)\geq 0$ for all $s\geq 0$, such that $\|g(x)\|\leq \beta_g(|x|)$, for all $x\in \R^n$,
\item There exists $\beta_f \in\cK_\infty$ such that $|f_1(x)-f_2(x)|\leq \beta_f(|x|)$, for all $x\in \R^n$.\hfill $\triangle$
\end{enumerate}
\end{assumption}
\begin{myexample}[Regular Values and Partitions]
Consider $q\in \cC^1(\R^n,\R)$ such that $0$ is a regular value of $q$, i.e. $\nabla q(x)\neq 0$ for all $x$ satisfying $q(x)=0$; then condition~\ref{item:conditionsPartition} of Assumption~\ref{ass:mainassumpCONTr} is satisfied. Indeed, by the Implicit Function Theorem, $X_0:=\{x\in \R^n\;\vert\;q(x)=0\}$ is a $(n-1)$-dimensional $\cC^1$ manifold, and hence $X_0$ has Lebesgue measure $0$. Let us prove that $\{X_i,\R^n\}_{i\in \{1,2\}}$ is a proper partition, with $X_1$ and $X_2$ defined as in~\eqref{eq:two-modeSystem}. First of all,  $X_1\cup X_2=\R^n$ and $\bd(X_i)\subseteq X_0$, for any $i\in \{1,2\}$: in fact, if $q(x)>0$ (resp. $q(x)<0$), by continuity of $q$ it holds that $x\in \inn(X_1)$ (resp. $x\in\inn(X_2)$). It remains to prove that $\overline{\inn(X_i)}=X_i$, for any $i\in \cI$. Consider w.l.o.g. $i=1$; the inclusion $\overline{\inn(X_i)}\subseteq X_i$ is trivial. Let us consider $x\in X_1$, i.e. $q(x)\geq 0$. If $q(x)>0$, then $x\in \inn(X_1)\subset \overline{\inn(X_1)}$. If $q(x)=0$, by assumption $\nabla q(x)\neq 0$, therefore $x$ is neither a maximum nor a minimum. Thus there exists a sequence $x_k\to x$ such that $q(x_k)>0$, $\forall \,k\in \N$, and hence, $x\in \overline{\inn(X_1)}$. \hfill $\triangle$
\end{myexample}
\begin{myexample}[Switched Linear Case]\label{rmk:Linear}
As a simple paradigm, one can think of a state-dependent switched linear system, such as
\[
f_i(x)=A_ix,\;\;\;g(x)\equiv B,\;\;\;h(x)=Cx,
\]
where $A_i\in \R^{n\times n}$ for $i\in \{1,2\}$, $B\in \R^{n\times m}$ and $C\in \R^{p\times n}$.
Regarding the function $q\in \cC^1(\R^n,\R)$, the simplest non-trivial cases are the halfspace partitions or the symmetric conic partitions, described respectively by the functions
\[
q_v(x):=\inp{v}{x}\;\;\;\text{or}\;\;\;q_Q(x):=x^\top Q x,
\]
for some $v\in \R^n$, or $Q\in \Sym(\R^{n\times n}):=\{S\in\R^{n\times n}\;\vert\;S^\top=S\}$, $Q$ is neither negative, nor positive semi-definite. These cases satisfy Assumption~\ref{ass:mainassumpCONTr}, by selecting
\[
\beta_g(s):=\|B\|,\;\;\text{and } \;\beta_f(s):=\|A_1-A_2\|s.\tag*{$\triangle$}
\]
\end{myexample}

Under Assumption~\ref{ass:mainassumpCONTr}, we design next an observer-based controller for system~\eqref{eq:two-modeSystem} of the form
\begin{equation}\label{eq:two-modeController}
\cC:\begin{cases}
\dot z=
\begin{cases}
f_1(z)+g(z)u+\ell_1(y-h(z))\;\;\;\text{if }z\in X_1,\\
f_2(z)+g(z)u+\ell_2(y-h(z))\;\;\;\text{if }z\in X_2,
\end{cases}\\
u=k(z),
\end{cases}
\end{equation}
where $\ell_1,\ell_2\in \cC^1(\R^p,\R^n)$, and the globally Lipschitz map $k:\R^n\to\R^m$ are  design parameters.\footnote{The globally Lipschitz assumption on $k:\R^n\to \R^m$ can be relaxed by  asking that $\exists \alpha_k\in \cK_\infty$ such that $|k(x)-k(y)|\leq \alpha_k(|x-y|)$, for all $x,y\in \R^n$.} 
The design of globally Lipschitz feedback laws is a rather common occurrence in stabilization problems for various kinds of nonlinear systems and in particular the design methods in~\cite[Chapters 13, 14]{khalil2002nonlinear} can be adapted to meet this requirement. Moreover, when restricting the attention to initial states in a compact set, it is possible to develop semiglobal results to allow for locally Lipschitz feedbacks, as explained in~\cite{TeelPra94}.
We consider the interconnected system \eqref{eq:two-modeSystem}-\eqref{eq:two-modeController}, and in particular its Filippov regularization, which can be written as follows
\begin{subequations}\label{eq:coupledRegularization}
\begin{align} 
&\dot x\in \co\left \{f_i(x)\;\vert\;i\in \cI_\cX(x)\right\}+g(x)k(z)=:\wt F_x(x,z),\label{eq:z<-xsystem}\\
&\begin{aligned}
\dot z\in &\co\left\{  f_i(z)+\ell_i(h(x)-h(z))\;\vert\;i\in \cI_\cX(z)\right\}\\&+g(z)k(z)=:\wt F_z(x,z)\label{eq:x-zSystem},
\end{aligned}
\end{align}
\end{subequations}
where the function $\cI_\cX$ is defined as in~\eqref{eq:funcI}.\\
The maps $\wt F_x,\wt F_z:\R^n\times \R^n\rightrightarrows \R^n$ satisfy the conditions of Assumption \ref{Assump:Main}: they have non-empty, compact and convex values, they are locally bounded, upper semicontinuous with respect to the states ($x$ and $z$ respectively) and continuous with respect to the inputs ($z$ and $x$ respectively). We can thus conclude local existence of solutions for the systems~\eqref{eq:z<-xsystem},~\eqref{eq:x-zSystem} using Proposition~\ref{prop:Existence}.

To design~\eqref{eq:two-modeController}, we first characterize stability of the interconnection~\eqref{eq:coupledRegularization}. To this end, we perform the change of coordinates $(x,z)\mapsto (x,e):=(x,x-z)$ and we construct the Filippov regularization of the corresponding dynamics, resulting in
\vspace{-0.1cm}
\begin{subequations}\label{eq:StateAndErros}
\begin{align} 
&\dot x\in F_x(x,e):=\wt F_x(x,x-e)\label{eq:xSystem}\\
&\dot e\in F_e(x,e),\label{eq:eSystem}
\end{align}
\end{subequations}
where the map $F_{e}:\R^n\times \R^n\rightrightarrows \R^n$ is defined as the Filippov regularization of the discontinuous map $f_{e}(x,e):=$
\small
\begin{equation}\label{eq:ErrorDiscDynamic}
\begin{aligned}
&f_1(x)-f_1(z)-\ell_1(h(x)-h(z))+\widetilde{g}(x,z)\;\text{if } q(x)\geq 0, q(z)\geq 0,\\
&f_2(x)-f_1(z)-\ell_1(h(x)-h(z))+\widetilde{g}(x,z)\;\text{if } q(x)\leq 0, q(z)\geq 0,\\
&f_1(x)-f_2(z)-\ell_2(h(x)-h(z))+\widetilde{g}(x,z)\;\text{if } q(x)\geq 0, q(z)\leq 0,\\
&f_2(x)-f_2(z)-\ell_2(h(x)-h(z))+\widetilde{g}(x,z)\;\text{if } q(x)\leq 0, q(z)\leq 0,
\end{aligned}
\end{equation}
\normalsize
with $\widetilde{g}(x,z):=(g(x)-g(z))k(z)$. 

In our construction, we first use Theorem~\ref{thm:SmallGainThm} to ensure ISS of~\eqref{eq:xSystem} based on two functions $V_1,V_2$, each of them associated to a mode.
\begin{property}\label{lemm:ISSx}
There exist $V_1,V_2\in \cC^1(\R^n,\R)$, and $\underline\psi_x,\overline\psi_x,\rho_x,\alpha_x\in\cK_\infty$, such that, for each $x\in \R^n$,  $(-1)^{i-1}q(x)> 0$ implies
\begin{subequations}
\begin{align} 
&\underline\psi_x(|x|)\leq V_i(x)\leq \overline\psi_x(|x|),\label{eq:xDynamicContinuity}\\
&|\nabla V_i(x)|\leq \rho_x(|x|),\label{eq:xDynamicBoundGradient}\\
&\inp{\nabla V_i(x)}{f_i(x)+g(x)k(x)}\leq -\alpha_x(|x|)\label{eq:xDynamicLyapunovDecrease}.
\end{align}
\end{subequations}
Moreover, there exists a function $\theta_x\in\cK_\infty$  such that 
\begin{equation}
\theta_x(s)\leq\frac{\alpha_x(s)}{\beta_g(s)\rho_x(s)}, \;\;\forall\;s\in \R_+.\label{eq:xDynamicStrangeFunct}
\end{equation}
Finally, defining
\begin{equation}
V_x(x):=V_i(x),\;\;\;\;\text{if }x\in X_i\;\;i=1,2,
\end{equation}
we suppose that $V_x$ is continuous, that is,
\begin{equation}\label{eq:continuityProperty1}
(q(x)=0)\;\Rightarrow V_1(x)=V_2(x),
\end{equation}
and there exist functions $\gamma^q_{x}\in \cK$ and $\alpha^q_{x}\in \cPD$ such that for all $x\in \R^n$ satisfying $q(x)=0$, it holds that
\begin{equation}\label{eq:xDynamicExplicitLie}
( |x|\geq \gamma^q_{x}(|e|)\,)\Rightarrow\;\max \dot{\overline{V}}_{x,F_x}(x,e)\leq - \alpha^q_x(|x|).
\end{equation}
\end{property}
Based on Property~\ref{lemm:ISSx}, we may prove the next result.
\begin{prop}\label{prop:xISSformE}
Under Property~\ref{lemm:ISSx}, there exists  $\widehat \alpha_x\in \cPD$ and $\widehat \gamma_x\in \cK$ such that
\[
(|x|\geq \widehat\gamma_x(|e|)\,)\;\Rightarrow\; \max \dot{\overline{V}}_{x,F_x}(x,e)\leq - \widehat\alpha_x(|x|)
\]
and thus system \eqref{eq:xSystem} is ISS w.r.t. $e$.
\end{prop}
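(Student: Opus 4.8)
The plan is to verify the hypotheses of Theorem~\ref{teo:mainres} for system~\eqref{eq:xSystem}, with $e$ playing the role of the exogenous input and $V_x$ the candidate Lyapunov function. First I would record the structural facts: $V_x$ is continuous by~\eqref{eq:continuityProperty1} and agrees with $V_i\in\cC^1$ on $X_i$, so $V_x\in\mathscr{P}(\cX)$ and, by Proposition~\ref{lemma:propertiesofppf}, $V_x$ is locally Lipschitz and nonpathological with $\partial V_x(x)=\co\{\nabla V_i(x)\mid i\in\cI_\cX(x)\}$; moreover $F_x(x,e)=\wt F_x(x,x-e)=\co\{f_i(x)\mid i\in\cI_\cX(x)\}+g(x)k(x-e)$ satisfies Assumption~\ref{Assump:Main} (inherited from $\wt F_x$, cf.\ the discussion after~\eqref{eq:coupledRegularization}, via the continuous map $(x,e)\mapsto(x,x-e)$). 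The bound~\eqref{eq:cond1} follows from~\eqref{eq:xDynamicContinuity} with $\underline\alpha:=\underline\psi_x$, $\overline\alpha:=\overline\psi_x$. For the decrease condition~\eqref{eq:cond2} I would use $\R^n=X_1\cup X_2=\inn(X_1)\cup\inn(X_2)\cup\partial X$, with $\partial X\subseteq\{q=0\}$ (by continuity of $q$ and $X_1=\{q\geq0\}$, $X_2=\{q\leq0\}$), and estimate $\dot{\overline{V}}_{x,F_x}(x,e)$ on each piece.

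On $\inn(X_i)=\{(-1)^{i-1}q(x)>0\}$ the function $V_x$ equals $V_i\in\cC^1$ locally and $F_x(x,e)$ is the singleton $\{f_i(x)+g(x)k(x-e)\}$, so $\dot{\overline{V}}_{x,F_x}(x,e)=\{\inp{\nabla V_i(x)}{f_i(x)+g(x)k(x-e)}\}$. Writing $k(x-e)=k(x)+\bigl(k(x-e)-k(x)\bigr)$, bounding the nominal term by~\eqref{eq:xDynamicLyapunovDecrease} and the remainder using $|\nabla V_i(x)|\leq\rho_x(|x|)$ from~\eqref{eq:xDynamicBoundGradient}, $\|g(x)\|\leq\beta_g(|x|)$ from Assumption~\ref{ass:mainassumpCONTr}, and $|k(x-e)-k(x)|\leq\alpha_k(|e|)$ for a suitable $\alpha_k\in\cK_\infty$ (from the globally Lipschitz design of $k$), I obtain
\[
\inp{\nabla V_i(x)}{f_i(x)+g(x)k(x-e)}\leq-\alpha_x(|x|)+\rho_x(|x|)\,\beta_g(|x|)\,\alpha_k(|e|).
\]
Hypothesis~\eqref{eq:xDynamicStrangeFunct} is exactly what absorbs the second term: multiplying it by $\rho_x(s)\beta_g(s)\geq0$ gives $\theta_x(s)\,\rho_x(s)\,\beta_g(s)\leq\alpha_x(s)$, so whenever $|x|\geq\theta_x^{-1}\bigl(2\alpha_k(|e|)\bigr)$ one has $\rho_x(|x|)\beta_g(|x|)\alpha_k(|e|)\leq\tfrac12\alpha_x(|x|)$ (the case $\beta_g(|x|)=0$ being trivial), hence $\dot{\overline{V}}_{x,F_x}(x,e)\leq-\tfrac12\alpha_x(|x|)$. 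On $\partial X$ one has $q(x)=0$, so~\eqref{eq:xDynamicExplicitLie} applies directly: $|x|\geq\gamma^q_x(|e|)$ implies $\max\dot{\overline{V}}_{x,F_x}(x,e)\leq-\alpha^q_x(|x|)$.

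Taking $\widehat\gamma_x(s):=\max\{\theta_x^{-1}(2\alpha_k(s)),\gamma^q_x(s)\}\in\cK$ and $\widehat\alpha_x(s):=\min\{\tfrac12\alpha_x(s),\alpha^q_x(s)\}\in\cPD$, the two cases combine into the stated implication $|x|\geq\widehat\gamma_x(|e|)\Rightarrow\max\dot{\overline{V}}_{x,F_x}(x,e)\leq-\widehat\alpha_x(|x|)$. To deduce ISS I would recast this in the form of~\eqref{eq:cond2}: since $V_x(x)\leq\overline\psi_x(|x|)$, with $\gamma:=\overline\psi_x\circ\widehat\gamma_x\in\cK$ and $\rho:=\widehat\alpha_x\in\cPD$ one gets $V_x(x)>\gamma(|e|)\Rightarrow|x|>\widehat\gamma_x(|e|)\Rightarrow\max\dot{\overline{V}}_{x,F_x}(x,e)\leq-\rho(|x|)$, so Theorem~\ref{teo:mainres} (with input $e$) yields ISS of~\eqref{eq:xSystem} w.r.t.\ $e$. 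I expect the only real obstacle to be the $\inn(X_i)$ estimate — recognizing~\eqref{eq:xDynamicStrangeFunct} as precisely the ingredient that turns the state-dependent perturbation $g(x)\bigl(k(x-e)-k(x)\bigr)$ into an admissible ISS gain; the boundary case is handed over by Property~\ref{lemm:ISSx}, and what remains is routine class-$\cK$/$\cPD$ bookkeeping, with only mild care needed where $\beta_g$ vanishes.
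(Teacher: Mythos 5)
Your proposal is correct and follows essentially the same route as the paper: split $\R^n$ into $\inn(X_1)\cup\inn(X_2)\cup\partial X$, on the interior write the vector field as the nominal closed loop plus the perturbation $g(x)\bigl(k(x-e)-k(x)\bigr)$, absorb that perturbation via~\eqref{eq:xDynamicBoundGradient}, $\beta_g$, the Lipschitz bound on $k$, and~\eqref{eq:xDynamicStrangeFunct}, hand the boundary case to~\eqref{eq:xDynamicExplicitLie}, and combine with a max of gains and a min of decay rates. The only differences are cosmetic (a fixed split $\tfrac12\alpha_x+\tfrac12\alpha_x$ instead of the paper's $(1-\varepsilon)\alpha_x+\varepsilon\alpha_x$, and your explicit recasting of the gain condition into the $V_x(x)>\gamma(|e|)$ form required by Theorem~\ref{teo:mainres}, which the paper leaves implicit).
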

\begin{proof}
First of all we rewrite system~\eqref{eq:xSystem} as 
\[
\dot x\in \co\left \{f_i(x)\;\vert\;i\in \cI_\cX(x)\right\}+g(x)k(x)-g(x)\left [k(x)-k(z)\right].
\]
Let us note that equation~\eqref{eq:xDynamicContinuity} assures continuity of the function $V_x$, and thus $V_x$ is a piecewise $\cC^1$ function with respect to $\cX$,  in the sense of Definition~\ref{defn:piecewise}.
Consider first a point $x\in \inn(X_i)$ for some $i\in \{1,2\}$, that is an $x\in \R^n$ such that $(-1)^{i-1}q(x)> 0$. We have
\[
F_x(x,e)=\left\{f_i(x)+g(x)k(x)-g(x)[k(x)-k(z)]\right\}=:\{\wt f_i\},
\]
and thus by equation~\eqref{eq:xDynamicLyapunovDecrease} it follows that, for $x\in X_i$,
\[
\begin{aligned}
\inp{\nabla V_x(x)}{\wt f_i}&=\inp{\nabla V_i(x)}{\wt f_i}\\&\leq - \alpha_x(|x|)+|\nabla V_i(x)|\|g(x)\||k(x)-k(z)|\\&\leq - \alpha_x(|x|)+\rho_x(|x|)\beta_g(|x|)L_k|x-z|,
\end{aligned}
\]
where $\beta_g$ comes from Assumption \ref{ass:mainassumpCONTr}, $L_k>0$ is the Lipschitz constant of the map $k:\R^n\to\R$ and $\rho_x$ is given by equation~\eqref{eq:xDynamicBoundGradient}.
Choosing $0<\varepsilon<1$, we have
\[
\begin{aligned}
\inp{\nabla V_x(x)}{\wt f_i}\leq &-(1-\varepsilon)\alpha_x(|x|) \\&-\varepsilon \alpha_x(|x|)+\rho_x(|x|)\beta_g(|x|)L_k|x-z|
\end{aligned}
\]
and thus, for $i\in \{1,2\}$, and each $x\in X_i$,
\[
\inp{\nabla V_x(x)}{\wt f_i}\leq -(1-\varepsilon)\alpha_x(|x|),\;\text{if}\;|e|\leq \frac{\varepsilon\alpha_x(|x|)}{L_k\rho_x(|x|)\beta_g(|x|)}.
\]
Thanks to~\eqref{eq:xDynamicStrangeFunct}, the function $\widehat\theta_x(s):=\frac{\varepsilon \theta_x(s)}{L_k}\leq\frac{\varepsilon\alpha_x(s)}{L_k\rho_x(s)\beta_g(s)}$ is of class $\cK_\infty$. Defining $\alpha^c_x:=(1-\varepsilon)\alpha_x$ and $ \gamma^c_x:=\widehat \theta_x^{-1}$, by arbitrariness of $i\in \{1,2\}$, the previous inequality implies that,  for any $x\in \inn(X_1)\cup\inn(X_2)$, 
\begin{equation}\label{eq:PropLieInterior}
(|x|\geq  \gamma^c_x(|e|)\,)\;\Rightarrow\;\max \dot{\overline{V}}_{x,F_x}(x,e)\leq - \alpha^c_x(|x|).
\end{equation}
Consider now a point $x\in \partial X_1\cup\partial X_2$. By definition of the proper partition $\{X_1,X_2\}$, we have $q(x)=0$, and thus implication~\eqref{eq:xDynamicExplicitLie} holds.
Collecting~\eqref{eq:xDynamicExplicitLie} and~\eqref{eq:PropLieInterior} we obtain that, for all $x,e\in \R^n$,
\[
(|x|\geq  \widehat\gamma_x(|e|)\,)\;\Rightarrow\;\max \dot{\overline{V}}_{x,F_x}(x,e)\leq - \widehat\alpha_x(|x|),
\]
where $\widehat\gamma_x(s):=\max\{\gamma^c(s),\gamma^q_x(s)\}$ and $\widehat\alpha_x(s):=\min\{\alpha^c_x(s),\alpha^q_x(s)\}$, concluding the proof.
\end{proof}

\begin{oss}[Clarke derivative based condition]\label{rmk:ClarkeRelaxation}
It is possible to obtain a corollary of Proposition~\ref{prop:xISSformE}, based on the Clarke derivative, as in Definition~\ref{def:geneder}. To this end, it is sufficient to replace implication~\eqref{eq:xDynamicExplicitLie} in Property~\ref{lemm:ISSx}, with the following:
\begin{enumerate}[leftmargin=*,label={(CL.\arabic*)}]
\item For all $x\in \R^n$ such that $q(x)= 0$, for each $i\in \{1,2\}$,
\[
\inp{\nabla V_i(x)}{f_{3-i}(x)+g(x)k(x)}\leq -\alpha_x(|x|),\label{item:Clarkebasedcond}
\]
where $\alpha_x\in \cK_\infty$ satisfies also~\eqref{eq:xDynamicLyapunovDecrease}. 
\end{enumerate}
The proof carries over straightforwardly, recalling the inclusion~\eqref{eq:setdiffinclusion}.\hfill$\triangle$
\end{oss}
\setcounter{myexample}{2}
\begin{myexample}[Continued]\label{rmk:Linear2}
In the switched linear case of Example~\ref{rmk:Linear}, Property~\ref{lemm:ISSx} can be guaranteed with quadratic functions $V_i(x):=x^\top P_i x$, $i\in \{1,2\}$. Indeed, since the partitions given by $q_v$ (or $q_Q$) are conic, i.e. $X_i$ is a cone for each $i\in \{1,2\}$, we can look for Lyapunov functions \emph{homogeneous of degree $2$}, see~\cite{Ros92} and the extension~\cite{TunTeel06}. From now on we focus on the case $q_Q(x)=x^\top Q x$. The half-space partition case (i.e. considering $q_v(x)=\inp{v}{x}$) can be developed analogously to~\cite{heemels2007input}.

Considering $q_Q(x)$, it suffices to find  $K\in  \R^{m\times n}$, positive definite matrices  $P_1,P_2\in \R^{n\times n}$, $\mu_{12},\mu_{21},\mu_Q\in \R$, $a_x>0$ and $\mu_1,\mu_2\geq0$ such that \small
\begin{subequations}
\begin{align}
P_1-P_2=\mu_Q Q&;\label{eq:ContinuityLinear}\\
 \mu_1Q+P_1(A_1+BK)+(A_1+BK)^\top P_1+a_xI&\prec0;\label{eq:Sprocd1}\\
-\mu_2Q+P_2(A_2+BK)+(A_2+BK)^\top P_2+a_xI&\prec0;\label{eq:Sprocd2}\\
\mu_{12} Q+P_1(A_2+BK)+(A_2+BK)^\top P_1+a_xI&\prec0;\label{eq:FInsler1}\\
\mu_{21} Q+P_2(A_1+BK)+(A_1+BK)^\top P_2+a_xI&\prec0.\label{eq:Finsler2}
\end{align}
\end{subequations}\normalsize
Then all the conditions of Property~\ref{lemm:ISSx} hold with $V_x(x):=x^\top P_ix$, if $x\in X_i$.
Indeed, first we note that \eqref{eq:ContinuityLinear} implies~\eqref{eq:continuityProperty1} of Property~\ref{lemm:ISSx}. Moreover, we can define
\[
\underline{\lambda_x}:=\min_{i\in \{1,2\}}\{\lambda_{\text{min}}(P_i)\},\;\;\;\overline{\lambda_x}:=\max_{i\in \{1,2\}}\{\lambda_{\text{max}}(P_i)\},
\]
 where $\lambda_{\max}(P),\lambda_{\min}(P)$ represent respectively the largest and the smallest eigenvalues of a positive definite matrix $P\succ0$. The bound functions in~\eqref{eq:xDynamicContinuity} and~\eqref{eq:xDynamicBoundGradient} of Property~\ref{lemm:ISSx} are thus obtained by defining
\[
\underline{\psi_x}(s):=\underline{\lambda_x}s^2,\;\;\overline{\psi_x}(s):=\overline{\lambda_x}s^2,\;\;\rho_x(s):=2\overline{\lambda_x}s.
\]
Via the S-Procedure, equation~\eqref{eq:Sprocd1} implies
\[
 x^\top (P_1(A_1+BK)+(A_1+BK)^\top P_1)x<-a_x |x|^2,\;
\]
if  $x^\top Qx\geq 0$ and equation~\eqref{eq:Sprocd2} implies
\[
\hskip-0.1cmx^\top (P_2(A_2+BK)+(A_2+BK)^\top P_2)x<-a_x |x|^2,\;{}
\]
if $x^\top Qx\leq 0$. We have thus proved~\eqref{eq:xDynamicLyapunovDecrease} of Property~\ref{lemm:ISSx} with $\alpha_x(s):=a_x s^2$. 
Similarly,  using Finsler's Lemma, equations~\eqref{eq:FInsler1} and \eqref{eq:Finsler2} imply item~\ref{item:Clarkebasedcond} in Remark~\ref{rmk:ClarkeRelaxation}, again with $\alpha_x(s):=a_x s^2$. The function $\theta_x\in \cK_\infty$ in~\eqref{eq:xDynamicStrangeFunct} can be defined as $\theta_x(s):=\frac{a_x}{2\|B\|\overline{\lambda}_x}s$.\hfill$\triangle$
\end{myexample}

Let us now consider the error dynamics~\eqref{eq:eSystem} and characterize ISS from $x$, using a $\cC^1$-Lyapunov function, satisfying the next property. 

\begin{property}\label{lemma:ISSe}
Suppose that there exist $V_e\in\cC^1(\R^n,\R)$, and $\underline\psi_e,\overline\psi_e,\alpha_e,\rho_e\in \cK_\infty$ such that
\begin{subequations}
\begin{align} 
&\underline\psi_e(|e|)\leq V_e(e)\leq \overline\psi_e(|e|), \;\;\forall\; e\in \R^n\label{eq:eDynamic1}\\
&\left|\frac{\partial V_e}{\partial e}(e)\right|\leq \rho_e(|e|),\;\;\forall\; e\in \R^n.\label{eq:eDynamic2}
\end{align}
\end{subequations}
Moreover, for all $x\in\R^n $, for all $z\in \R^n$ and for each $i\in\{1,2\}$,
\begin{equation}\label{eq:eDynamic3}
\inp{\frac{\partial V_e}{\partial e}(e)}{f_i(x)-f_i(z)-\ell_i(h(x)-h(z))+\widetilde{g}(x,z)}\leq -\alpha_e(|e|),
\end{equation}
with $e=x-z$.
Finally there exists $\theta_e\in\cK_\infty$  such that 
\begin{equation}\label{eq:eDynamicBoundFunctions}
\theta_e(s)\leq\frac{\alpha_e(s)}{\rho_e(s)}\;\;\;\forall\;s\in \R_+.
\end{equation}
\end{property}

Based on Property~\ref{lemma:ISSe} we can prove the next result.
\begin{prop}\label{prop:EISSfromX}
Under Property~\ref{lemma:ISSe},  there exist $\widehat\gamma_e\in \cK$ and $\widehat \alpha_e\in \cK_\infty$ such that
  \[
 (|e|\geq \widehat \gamma_e(|x|)\,)\;\Rightarrow\;\inp{\frac{\partial V_e}{\partial e}(e)}{f_e}\leq -\widehat\alpha_e(|e|),
  \]
  for all $f_e\in F_{e}(x,e)$, and thus the system \eqref{eq:eSystem} is ISS w.r.t. $x$.
\end{prop}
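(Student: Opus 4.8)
The plan is to derive the displayed Lie-derivative estimate by a vertex-by-vertex analysis of the Filippov set $F_e(x,e)$, and then to conclude ISS through Theorem~\ref{teo:mainres} with $x$ in the role of the input. Since the two regularizations in~\eqref{eq:coupledRegularization} act separately on the $x$-mode and on the $z$-mode, the first step is to observe that, writing $z:=x-e$ and
\[
v_{ij}:=f_i(x)-f_j(z)-\ell_j(h(x)-h(z))+\widetilde{g}(x,z),\qquad i,j\in\{1,2\},
\]
one has $F_e(x,e)=\co\{v_{ij}\mid i\in\cI_\cX(x),\,j\in\cI_\cX(z)\}\subseteq\co\{v_{11},v_{12},v_{21},v_{22}\}$; the $v_{ij}$ are exactly the vector-field expressions appearing in the branches of~\eqref{eq:ErrorDiscDynamic}. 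Since $V_e\in\cC^1$, it follows that $\max\dot{\overline{V}}_{e,F_e}(x,e)\leq\max_{i,j\in\{1,2\}}\inp{\frac{\partial V_e}{\partial e}(e)}{v_{ij}}$, so it suffices to bound each of these four inner products.

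For the two diagonal vertices, inequality~\eqref{eq:eDynamic3} of Property~\ref{lemma:ISSe} gives directly $\inp{\frac{\partial V_e}{\partial e}(e)}{v_{ii}}\leq-\alpha_e(|e|)$. For an off-diagonal vertex, say $v_{21}$, I would write $v_{21}=v_{11}+\bigl(f_2(x)-f_1(x)\bigr)$, apply~\eqref{eq:eDynamic3} to the $v_{11}$-part, and bound the remainder by combining~\eqref{eq:eDynamic2} (Cauchy--Schwarz) with item~g) of Assumption~\ref{ass:mainassumpCONTr}, obtaining $\inp{\frac{\partial V_e}{\partial e}(e)}{f_2(x)-f_1(x)}\leq\rho_e(|e|)\beta_f(|x|)$; the vertex $v_{12}=v_{22}+\bigl(f_1(x)-f_2(x)\bigr)$ is handled symmetrically. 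Hence $\max\dot{\overline{V}}_{e,F_e}(x,e)\leq-\alpha_e(|e|)+\rho_e(|e|)\beta_f(|x|)$ for every $x,e\in\R^n$.

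It then remains to absorb the cross term. Using $\theta_e(s)\leq\alpha_e(s)/\rho_e(s)$ from~\eqref{eq:eDynamicBoundFunctions}, I would set $\widehat\gamma_e:=\theta_e^{-1}\circ(2\beta_f)\in\cK$ and $\widehat\alpha_e:=\tfrac12\alpha_e\in\cK_\infty$, and observe that $|e|\geq\widehat\gamma_e(|x|)$ forces $\beta_f(|x|)\leq\tfrac12\theta_e(|e|)\leq\alpha_e(|e|)/(2\rho_e(|e|))$, hence $\rho_e(|e|)\beta_f(|x|)\leq\tfrac12\alpha_e(|e|)$ and therefore $\max\dot{\overline{V}}_{e,F_e}(x,e)\leq-\widehat\alpha_e(|e|)$, which is the asserted implication. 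Finally, since $V_e\in\cC^1$ is nonpathological and satisfies the sandwich bound~\eqref{eq:eDynamic1}, rewriting $|e|\geq\widehat\gamma_e(|x|)$ as $V_e(e)>\overline\psi_e(\widehat\gamma_e(|x|))$ brings $V_e$ into the hypotheses of Theorem~\ref{teo:mainres} (with decay rate $\rho=\widehat\alpha_e\in\cPD$ and gain $\gamma=\overline\psi_e\circ\widehat\gamma_e\in\cK$), which yields ISS of~\eqref{eq:eSystem} with respect to $x$.

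I expect the only genuinely delicate point to be the opening step: identifying $F_e(x,e)$ with the convex hull of the four ``mixed-mode'' vector fields and recognizing that the two off-diagonal ones are precisely what Property~\ref{lemma:ISSe} leaves uncontrolled, so that item~g) of Assumption~\ref{ass:mainassumpCONTr} --- whose $\cK_\infty$-bound $|f_1-f_2|\leq\beta_f(|x|)$ vanishes at the origin --- is exactly what makes $\widehat\gamma_e$ a genuine class-$\cK$ gain rather than merely a constant offset. Everything after that is routine comparison-function bookkeeping.
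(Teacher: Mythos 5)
Your proof is correct and follows essentially the same route as the paper: the paper likewise rewrites the off-diagonal branches of~\eqref{eq:ErrorDiscDynamic} as the diagonal ones plus the mismatch term $\pm\widetilde f(x)=\pm(f_2(x)-f_1(x))$, bounds that term by $\beta_f(|x|)$ via Assumption~\ref{ass:mainassumpCONTr}, and absorbs the cross term $\rho_e(|e|)\beta_f(|x|)$ using $\theta_e$ exactly as you do (the paper uses a generic split $\widehat\alpha_e=(1-\varepsilon)\alpha_e$, $\widehat\gamma_e=(\varepsilon\theta_e)^{-1}\circ\beta_f$ where you take $\varepsilon=\tfrac12$). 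Your explicit four-vertex description of $F_e(x,e)$ and the linearity-of-the-inner-product reduction are just a more detailed spelling-out of what the paper compresses into ``apply the same reasoning as in Proposition~\ref{prop:xISSformE}.''
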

\begin{proof}
It is easy to see that the second and third expression in \eqref{eq:ErrorDiscDynamic} can be rewritten respectively as
\[
\begin{aligned}
&f_1(x)-f_1(z)-\ell_1(h(x)-h(z))+\widetilde{g}(x,z)+(f_2(x)-f_1(x))\\
&f_2(x)-f_2(z)-\ell_2(h(x)-h(z))+\widetilde{g}(x,z)+(f_1(x)-f_2(x))
\end{aligned}
\]
and thus we can rewrite $f_{e}:\R^n\times \R^n\to \R^n$ as 
\begin{equation*}
f_{e}(x,z):=\begin{cases}
&\begin{aligned}
&f_1(x)-f_1(z)-\ell_1(h(x)-h(z))\\&+\widetilde{g}(x,z)+\mathbb{\cI}_{-}(q(x))\widetilde f(x)\hskip1.1cm\text{if }  q(z)\geq 0,
\end{aligned}\\\\
&\begin{aligned}
&f_2(x)-f_2(z)-\ell_2(h(x)-h(z))\\&+\widetilde{g}(x,z)-\mathbb{\cI}_{+}(q(x))\widetilde f(x)\hskip1.1cm\text{if }  q(z)\leq 0.
\end{aligned}
\end{cases}
\end{equation*}
where we defined $\widetilde f(x):=f_2(x)-f_1(x)$ and $\mathbb{\cI}_{+},\mathbb{\cI}_{-}$ are the indicator functions of the positive and negative real numbers respectively. We note that, by Assumption \ref{ass:mainassumpCONTr},
\[
\max\{|\mathbb{\cI}_{-}(q(x))\widetilde f(x)|,|\mathbb{\cI}_{+}(q(x))\widetilde f(x)|\}\leq \beta_f(|x|)
\]
for all $x\in \R^n$. Thus, we can now apply the same reasoning used in proof of Proposition~\ref{prop:xISSformE}, concluding that
\[
(\,|e|\geq \widehat \gamma_e(|x|)\,)\;\Rightarrow\;\inp{\frac{\partial V_e}{\partial e}(e)}{f_e}\leq -\widehat\alpha_e(|e|), \;\forall f_e\in F_e(x,e),
\]
where $\widehat\alpha_e:=(1-\varepsilon)\alpha_e$ and $\widehat\gamma_e:=(\varepsilon\theta_e)^{-1}\circ\beta_f$, for some $0<\varepsilon<1$. Note that condition~\eqref{eq:eDynamicBoundFunctions} ensures that $\widehat \gamma_e\in \cK$. The ISS property follows again from Theorem~\ref{teo:mainres}.
\end{proof}
To clarify our construction, the idea behind Property~\ref{lemma:ISSe} and Proposition~\ref{prop:EISSfromX} is to search for a common $\cC^1$ Lyapunov function for the two vector fields $f_i(x)-f_i(z)-\ell_i(h(x)-h(z))+\widetilde{g}(x,z)$, $i\in \{1,2\}$. If $x$ and the estimated state $z$ are not in the same region $X_i$, then the $(x-z)$-dynamics is perturbed by a factor $\pm(f_1(x)-f_2(x))$, which is treated as an external disturbance. The injection gains $\ell_i$ induce ISS with respect to these disturbances.
\setcounter{myexample}{2}
\begin{myexample}[Continued]\label{rmk:Linear3}
For the switched linear case presented in Example~\ref{rmk:Linear}, Property~\ref{lemma:ISSe} can be ensured using a quadratic function $V_e(e):=e^\top P_e e$, with $P_e\succ0$, by finding matrices $L_1,L_2\in \R^{n\times p}$ and $a_e>0$ such that
\begin{equation}
P_e(A_i-L_i C)+(A_i-L_i C)^\top P_e+a_e I\prec0,\label{eq:LyapunovV_e}
\end{equation}
for each $i\in \{1,2\}$. Indeed, defining 
\[
\begin{aligned}
&\underline{\lambda_e}:=\lambda_{\text{min}}(P_e),\;\;\overline{\lambda_e}:=\lambda_{\text{max}}(P_e),\;\;\underline{\psi_e}(s):=\underline{\lambda_e}s^2,\;\;\\&\overline{\psi_e}(s):=\overline{\lambda_e}s^2,\;\;\rho_e(s):=2\overline{\lambda_e}s,\;\;\;\widehat \alpha_e(s):=a_e s^2,
\end{aligned}
\]
 equations \eqref{eq:eDynamic1},~\eqref{eq:eDynamic2} and~\eqref{eq:eDynamic3} are satisfied, and the function $\theta_e\in \cK_\infty$ in~\eqref{eq:eDynamicBoundFunctions} is defined as
$\theta_e(s)=\frac{a_e}{2\overline{\lambda_e}} s$.\hfill$\triangle$
\end{myexample}

We are finally ready to state our stability conditions, based on Theorem~\ref{thm:SmallGainThm}, for the interconnection in~\eqref{eq:coupledRegularization}.
\begin{cor}
Assume that Properties~\ref{lemm:ISSx} and~\ref{lemma:ISSe} hold, and define $\eta_1:=\overline\psi_x \circ\widehat\gamma_e \circ  \underline\psi_e^{-1}
$ and
$
\eta_2:= \overline\psi_e\circ\widehat\gamma_x \circ\underline\psi_x^{-1}$.
If
\begin{equation}\label{eq:SmallGAinconditionExample}
\eta_1\circ\eta_2(s)<s, \;\;\forall s>0,
\end{equation}
then system~\eqref{eq:coupledRegularization} is globally asymptotically stable.
\end{cor}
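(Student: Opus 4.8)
The plan is to recognize~\eqref{eq:StateAndErros} as a feedback interconnection of the form~\eqref{eq:CompDiffInc} --- with the $x$-subsystem~\eqref{eq:xSystem} playing the role of subsystem~$1$ and the $e$-subsystem~\eqref{eq:eSystem} that of subsystem~$2$, and with no exogenous input --- and to invoke the generalized small-gain theorem, Theorem~\ref{thm:SmallGainThm}, with the subsystem Lyapunov functions $V_x$ and $V_e$. Recall that~\eqref{eq:StateAndErros} is exactly~\eqref{eq:coupledRegularization} written in the linear coordinates $(x,z)\mapsto(x,e)=(x,x-z)$; since this is a global linear bijection under which the state norms are equivalent, GAS of~\eqref{eq:StateAndErros} is equivalent to GAS of~\eqref{eq:coupledRegularization}. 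The maps $F_x,F_e:\R^n\times\R^n\rightrightarrows\R^n$ possess the regularity required to apply Theorem~\ref{thm:SmallGainThm}, as already observed for~\eqref{eq:coupledRegularization}. Once Theorem~\ref{thm:SmallGainThm} delivers a nonpathological ISS-Lyapunov function $W$ for~\eqref{eq:StateAndErros}, evaluating the bound~\eqref{eq:issbounds} along the trivial input returns $|(x(t),e(t))|\le\beta(|(x_0,e_0)|,t)$, i.e.\ GAS, which is then transported back to~\eqref{eq:coupledRegularization}.

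First I would verify Assumption~\ref{ass:InterLyapFuncSub} with $V_1:=V_x$ and $V_2:=V_e$. By~\eqref{eq:continuityProperty1} together with the $\cC^1$ regularity of $V_1,V_2$ postulated in Property~\ref{lemm:ISSx}, the function $V_x$ lies in $\mathscr P(\cX)$, hence is locally Lipschitz and nonpathological by Proposition~\ref{lemma:propertiesofppf}; $V_e\in\cC^1$ is nonpathological by Lemma~\ref{lemma: PropNonPatFunc}. Extending~\eqref{eq:xDynamicContinuity} from $\inn(X_i)$ to $X_i$ by continuity (using $\overline{\inn(X_i)}=X_i$) gives $\underline\psi_x(|x|)\le V_x(x)\le\overline\psi_x(|x|)$ on all of $\R^n$, which together with~\eqref{eq:eDynamic1} is items~1a)--1b) of Assumption~\ref{ass:InterLyapFuncSub}.

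The core of the argument is to rewrite the norm-based ISS estimates supplied by Propositions~\ref{prop:xISSformE} and~\ref{prop:EISSfromX} in the Lyapunov-function-gain form of items~2a)--2b). Combining the implication of Proposition~\ref{prop:xISSformE} with $|x|\ge\overline\psi_x^{-1}(V_x(x))$, $|e|\le\underline\psi_e^{-1}(V_e(e))$ and monotonicity of $\widehat\gamma_x$ yields
\[
V_x(x)>\bigl(\overline\psi_x\circ\widehat\gamma_x\circ\underline\psi_e^{-1}\bigr)\!\bigl(V_e(e)\bigr)\ \Longrightarrow\ \max\dot{\overline V}_{x,F_x}(x,e)\le-\rho_1\bigl(V_x(x)\bigr),
\]
where $\rho_1\in\cPD$ is obtained from $\widehat\alpha_x\in\cPD$ via the elementary estimate $\widehat\alpha_x(|x|)\ge\min\{\widehat\alpha_x(r):\overline\psi_x^{-1}(V_x(x))\le r\le\underline\psi_x^{-1}(V_x(x))\}$ --- a conversion of the kind discussed in Remark~\ref{rmk:equiv}. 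Since $V_e\in\cC^1$, one has $\dot{\overline V}_{e,F_e}(x,e)=\{\inp{\nabla V_e(e)}{f}\ :\ f\in F_e(x,e)\}$, so Proposition~\ref{prop:EISSfromX} analogously gives
\[
V_e(e)>\bigl(\overline\psi_e\circ\widehat\gamma_e\circ\underline\psi_x^{-1}\bigr)\!\bigl(V_x(x)\bigr)\ \Longrightarrow\ \max\dot{\overline V}_{e,F_e}(x,e)\le-\rho_2\bigl(V_e(e)\bigr),
\]
with $\rho_2:=\widehat\alpha_e\circ\overline\psi_e^{-1}\in\cK_\infty$. As $F=(F_x;F_e)$ carries no exogenous input, items~2a)--2b) then hold with any $\gamma_1,\gamma_2\in\cK$, so Assumption~\ref{ass:InterLyapFuncSub} is in force; moreover the two gains just exhibited are the functions $\eta_1,\eta_2$ of the statement, whence the standing hypothesis~\eqref{eq:SmallGAinconditionExample}, namely $\eta_1\circ\eta_2(s)<s$, is precisely the small-gain inequality~\eqref{eq:SamllGainEquation} demanded by Theorem~\ref{thm:SmallGainThm} (one may use the standard fact $a\circ b<\mathrm{id}\iff b\circ a<\mathrm{id}$ for class-$\cK$ maps to pass between the two orderings if needed). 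Applying Theorem~\ref{thm:SmallGainThm} now produces the ISS-Lyapunov function $W(x,e)=\max\{\sigma(V_x(x)),V_e(e)\}$, and the GAS conclusion follows as explained in the first paragraph.

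The main obstacle I anticipate is precisely this third step: pushing the conversion from $|\cdot|$-based to $V$-based estimates through while tracking class memberships --- in particular producing a bona fide positive-definite decay rate $\rho_1$ out of the merely positive-definite (hence possibly non-monotone) function $\widehat\alpha_x$ --- and confirming that the resulting composed gains coincide with the $\eta_1,\eta_2$ entering~\eqref{eq:SmallGAinconditionExample}, so that the stated hypothesis is exactly what Theorem~\ref{thm:SmallGainThm} requires. The remaining ingredients are direct invocations of Proposition~\ref{lemma:propertiesofppf}, Propositions~\ref{prop:xISSformE} and~\ref{prop:EISSfromX}, and Theorem~\ref{thm:SmallGainThm}.
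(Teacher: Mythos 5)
Your proposal is correct and follows essentially the same route as the paper: verify Assumption~\ref{ass:InterLyapFuncSub} for $V_x$ and $V_e$ by sandwiching the norm-based implications of Propositions~\ref{prop:xISSformE} and~\ref{prop:EISSfromX} through the bounds $\underline\psi_x,\overline\psi_x,\underline\psi_e,\overline\psi_e$, identify the resulting gains with $\eta_1,\eta_2$, and invoke Theorem~\ref{thm:SmallGainThm} with zero exogenous input. Two minor remarks: your pairing places $\widehat\gamma_x$ inside the gain for the $x$-subsystem, which differs from the statement's literal definition of $\eta_1$ (which uses $\widehat\gamma_e$) but is exactly what the application of Proposition~\ref{prop:xISSformE} requires --- the paper's own proof derives $|x|\geq\widehat\gamma_e(|e|)$ from $V_x(x)\geq\eta_1(V_e(e))$ and then cites Proposition~\ref{prop:xISSformE}, so the subscripts are evidently swapped somewhere in the paper and your version is the consistent one --- and you are more careful than the paper in converting the decrease rate $-\widehat\alpha_x(|x|)$ into a genuinely positive-definite function of $V_x(x)$ as Assumption~\ref{ass:InterLyapFuncSub} formally demands.
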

\begin{proof}
Since by Propositions~\ref{prop:xISSformE} and~\ref{prop:EISSfromX} we can construct nonpathological ISS-Lyapunov functions  $V_x$ and $V_e$ as in Assumption~\ref{ass:InterLyapFuncSub}, it remains to check that condition \eqref{eq:SmallGAinconditionExample} implies the small gain condition~\eqref{eq:SamllGainEquation} in Theorem~\ref{thm:SmallGainThm}. First we note that, by~\eqref{eq:xDynamicContinuity} and~\eqref{eq:eDynamic1},
\[
\begin{aligned}
V_x(x)\geq \eta_1(V_e(e))&=\overline{\psi}_x\circ \widehat\gamma_e\circ \underline{\psi_e}^{-1}(V_e(e))\\&\Leftrightarrow\;\;\overline{\psi_x}^{-1}(V(x))\geq \widehat\gamma_e\circ\underline{\psi_e}^{-1}(V_e(e))\\
&\Rightarrow |x|\geq \widehat\gamma_e(|e|),
\end{aligned}
\]
since  $V_x(x)\leq \overline{\psi_x}(|x|)$, and $V_e(e)\geq \underline{\psi_e}(|e|)$. By Proposition~\ref{prop:xISSformE}, this implies that
\[
(V_x(x)\geq \eta_1(V_e(e))\,)\Rightarrow \max \dot{\overline{V}}_{x,F_x}(x,e)\leq - \widehat\alpha_x(|x|).
\]
 Following the same path for $\eta_1$, we obtain the implication
 \[
(V_e(e)\geq \eta_2(V_x(x))\,)\Rightarrow \max \dot{\overline{V}}_{e,F_e}(x,e)\leq - \widehat\alpha_e(|e|),
 \]
 proving that~\eqref{eq:SmallGAinconditionExample} implies~\eqref{eq:SamllGainEquation}.
\end{proof}
\begin{oss}
Considering again the switched system presented in Example~\ref{rmk:Linear}, we can check the small-gain condition \eqref{eq:SmallGAinconditionExample} as follows.
Recalling the definitions of $\widehat\gamma_x$ and $\widehat \gamma_e$ in the proofs of Propositions~\ref{prop:xISSformE} and~\ref{prop:EISSfromX} we can write
\[
\widehat\gamma_x(s)=\frac{2\|B\|\|K\|\overline{\lambda_x}}{\varepsilon a_x}\,s\;\;\text{and }\;\widehat\gamma_e(s)=\frac{2\|A_1-A_2\|\overline{\lambda_e}}{\varepsilon a_e}\,s.
\]
Thus, by arbitrariness of $0<\varepsilon<1$, condition $\eqref{eq:SmallGAinconditionExample}$ holds if 
\[
\frac{16\|B\|^2\|K\|^2\|A_1-A_2\|^2\overline{\lambda_x}^3\overline{\lambda_e}^3}{\underline{\lambda_x}\underline{\lambda_e}a_x^2a_e^2}<1.\tag*{$\triangle$}
\]
\end{oss}

\section{Conclusions}

We focused on ISS of differential inclusions using locally Lipschitz Lyapunov functions.
We provided sufficient conditions based on the notion of Lie derivative of the candidate Lyapunov function, which generalize previous results relying on the study of the Clarke derivative. We applied our results to state-dependent switched systems and proposed a new formulation of the well-known small gain theorem in the context of interconnected differential inclusions.  We finally studied the design of an observer-based output feedback controller for a bimodal switched system. As possible further research, we may investigate convex LMI-based algorithms, based on using Lipschitz non quadratic functions and Lie derivative.

\appendix 
\section{Properties of piecewise $\cC^1$ functions}\label{sec:Appendix}
In this Appendix we prove the two items of Proposition~\ref{lemma:propertiesofppf}, first characterizing Clarke generalized gradient, and then showing that piecewise $\cC^1$ functions are nonpathological.
\begin{lemma}\label{prop:PropoAppendix}
Consider $\cX=\{X_i,\cO_i\}_{i\in \cI}$, a proper partition of $\R^n$. If  $V\in \mathscr{P}(\cX)$ then $V$ is locally Lipschitz and 
\begin{equation}\label{eq:NCLgengrad}
\partial V(x)=\co\left\{ \nabla V_\ell(x) \;\vert\; \ell \in \cI_{\mathcal{X}}(x) \right\}.
\end{equation}
\end{lemma}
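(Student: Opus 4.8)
The plan is to deduce the local structure of $V$ from the proper-partition axioms and then evaluate the limit formula \eqref{eq:limClark} directly.

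\medskip
\noindent\emph{Step 1 (local Lipschitzness).} First I would fix $x_0\in\R^n$ and set $J:=\cI_\cX(x_0)$. Since $\cI$ is finite and each $X_j$ is closed, there is $r>0$ with $\overline{\B}(x_0,r)\subseteq\bigcap_{j\in J}\cO_j$ and $\overline{\B}(x_0,r)\cap X_j=\emptyset$ for every $j\notin J$. Then on $\B(x_0,r)$ one has $V(y)=V_j(y)$ whenever $y\in X_j$, and every $y\in\B(x_0,r)$ lies in some $X_j$ with $j\in J$; thus $V$ restricted to $\B(x_0,r)$ is a continuous selection of the finite family $\{V_j\}_{j\in J}\subseteq\cC^1$, each member of which is $L$-Lipschitz on the compact ball $\overline{\B}(x_0,r/2)$ with $L:=\max_{j\in J}\sup_{\overline{\B}(x_0,r/2)}|\nabla V_j|<\infty$. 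I would then invoke the elementary fact that a continuous selection of finitely many $L$-Lipschitz functions on a convex set is itself $L$-Lipschitz: restricting to a line segment and parametrizing by $[0,1]$ reduces this to showing that a continuous $g\colon[0,1]\to\R$ which on each member of a finite closed cover $\{A_i\}$ of $[0,1]$ agrees with an $M$-Lipschitz function is itself $M$-Lipschitz, and that follows by induction on the number of pieces, using that each $t\mapsto(\text{piece})(t)-Mt$ is nonincreasing; see also the theory of piecewise-$\cC^1$ functions in \cite{Sch12}. Hence $V$ is locally Lipschitz and \eqref{eq:limClark} applies.

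\medskip
\noindent\emph{Step 2 (differentiability set).} Every point of $\R^n$ lies in some $X_i$ (item a) of Definition~\ref{def:Ncov}), hence in $\inn(X_i)$ or in $\bd(X_i)\subseteq\partial X$; therefore $\R^n\setminus\partial X\subseteq\bigcup_i\inn(X_i)$. On the open set $\inn(X_i)$ we have $V\equiv V_i$, so $V$ is $\cC^1$ there with $\nabla V=\nabla V_i$; consequently $V$ is differentiable throughout $\R^n\setminus\partial X$, and the non-differentiability set $\cN_V$ is contained in $\partial X$, which is Lebesgue-null by item d). By \cite[Theorem~2.5.1]{clarke3}, since $\partial X\supseteq\cN_V$ has measure zero it may be excluded from the limit in \eqref{eq:limClark}, so that $\partial V(x)=\co\{\lim_{k}\nabla V(x_k)\mid x_k\to x,\ x_k\notin\partial X\}$.

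\medskip
\noindent\emph{Step 3 (the two inclusions).} For ``$\subseteq$'', take $x_k\to x$ with $x_k\notin\partial X$ and $\nabla V(x_k)\to v$. Each $x_k$ belongs to some $\inn(X_{i_k})$; passing to a subsequence along which $i_k\equiv\ell$ (possible since $\cI$ is finite), we get $x=\lim_k x_k\in X_\ell$, i.e.\ $\ell\in\cI_\cX(x)$, and, since $x\in X_\ell\subseteq\cO_\ell$ with $V_\ell\in\cC^1(\cO_\ell,\R)$, also $\nabla V(x_k)=\nabla V_\ell(x_k)\to\nabla V_\ell(x)$; hence $v=\nabla V_\ell(x)$. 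Taking convex hulls gives $\partial V(x)\subseteq\co\{\nabla V_\ell(x)\mid\ell\in\cI_\cX(x)\}$. For ``$\supseteq$'', fix $\ell\in\cI_\cX(x)$; by item c), $x\in X_\ell=\overline{\inn(X_\ell)}$, so there is a sequence $x_k\to x$ with $x_k\in\inn(X_\ell)\subseteq\R^n\setminus\partial X$, and then $\nabla V(x_k)=\nabla V_\ell(x_k)\to\nabla V_\ell(x)$, so $\nabla V_\ell(x)\in\partial V(x)$. Since $\partial V(x)$ is convex, $\co\{\nabla V_\ell(x)\mid\ell\in\cI_\cX(x)\}\subseteq\partial V(x)$, and \eqref{eq:NCLgengrad} follows.

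\medskip
The main obstacle is Step~1: a continuous selection can switch among the pieces $V_j$ infinitely often near $\partial X$, so local Lipschitzness is \emph{not} immediate from pointwise agreement, and is precisely what the monotonicity/induction argument (or the piecewise-$\cC^1$ theory of \cite{Sch12}) is needed for. Steps~2 and~3 are then routine bookkeeping against Definition~\ref{def:Ncov} and the limit formula \eqref{eq:limClark}.
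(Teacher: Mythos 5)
Your proof is correct. The second half (the two inclusions for $\partial V(x)$, obtained by approximating from $\inn(X_\ell)$ for ``$\supseteq$'' and by passing to a constant-index subsequence for ``$\subseteq$'') is essentially the paper's own argument, but you differ in two places. First, for local Lipschitzness the paper simply cites \cite[Proposition 4.1.2]{Sch12}, whereas you give a self-contained argument via the fact that a continuous selection of finitely many $L$-Lipschitz functions on a convex set is $L$-Lipschitz; the reduction to a one-dimensional monotonicity statement is the right idea, though the phrase ``induction on the number of pieces'' glosses the key step -- a direct argument (take $c=\sup\{t\in[a,b]:h(t)\le h(a)\}$ and extract a constant-index subsequence $r_k\downarrow c$ to contradict monotonicity of that piece) is what actually closes it. Second, and more substantively, you invoke the full strength of \cite[Theorem~2.5.1]{clarke3} -- that any Lebesgue-null set containing $\cN_V$ may be excluded from the limit formula -- so that every approximating sequence can be taken off $\partial X$ and hence inside some $\inn(X_{i_k})$. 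The paper instead works only with $\cN_V$ and must therefore handle differentiability points $x_k$ lying on $\partial X$ by a second approximation $x_{k,l}\to x_k$ from $\inn(X_\ell)$, asserting $\nabla V(x_k)=\lim_l\nabla V(x_{k,l})$; your route avoids this double limit entirely and is, if anything, the cleaner of the two.
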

\begin{proof}
For the proof that $V$ is locally Lipschitz we refer to \cite[Proposition 4.1.2]{Sch12}.
Given any $x\in \R^n$, define the sets 
\begin{equation*}
\begin{aligned}
S_1(x)&:=\{ \nabla V_\ell(x) \, \vert \, \ell\in \cI_{\mathcal{X}}(x)\},\\
S_2(x)&:=\left\{\lim_{k \to \infty} \nabla V(x_k) \, \big\vert \, x_k \to x, \;x_k \notin \cN_V \right\}.
\end{aligned}
\end{equation*}
We prove below that $S_1(x)=S_2(x)$; then \eqref{eq:NCLgengrad} follows from \eqref{eq:limClark}. 
If $x \in \inn(X_\ell)$ for some $\ell\in \cI$, we have already noted that $\partial V(x)= \{\nabla V_\ell(x)\}$ and thus $S_1(x)=S_2(x)$  trivially holds. Let us suppose $x\notin \bigcup_{\ell=1}^M \inn(X_\ell)$ which implies $x\in \partial X$.

{$S_1(x) \subseteq S_2(x)$:} Consider any $v=\nabla V_\ell(x)$ for some $\ell\in \cI_{\cX}(x)$. By Definition~\ref{def:Ncov}, there exists a sequence $x_k\in \inn(X_\ell)$ such that $x_k\to x$ and thus $\nabla V(x_k)=\nabla V_\ell(x_k)$. By continuity of $\nabla V_\ell$ we have
$
\lim_{k\to \infty}\nabla V(x_k)=\lim_{k\to \infty}\nabla V_\ell(x_k)=\nabla V_\ell(x),
$ therefore $v\in S_2(x)$.

{$S_2(x) \subseteq S_1(x)$:} Consider any $v\in \R^n$ such that there exists a sequence $x_k \to x$ of points where $V$ is differentiable, such that the sequence $\nabla V(x_k)$ converges to $v$. By Definition~\ref{def:Ncov}, there exists a neighborhood $\cO$ of $x$ such that $\cO\subseteq \bigcup_{\ell\in \cI_\cX(x)}X_\ell$, and thus for each $k$ (large enough) there exists an index $\ell_k\in \cI_\cX(x)$ satisfying $x_k\in X_{\ell_k}$. By finiteness of $\cI_\cX(x)$ we can extract a subsequence of $x_k$ (without relabeling) and an index $\ell\in \cI_\cX(x)$ such that $x_k\in X_\ell$ for each $k\in \N$ and $\nabla V(x_k)\to v$. Now for each $k\in\N$ we can take a sequence $x_{k,l}\to x_k$ satisfying $x_{k,l}\in \inn(X_\ell)$. Thus 
$
\nabla V(x_k)=\lim_{l\to \infty}\nabla V(x_{k,l})=\lim_{l\to \infty}\nabla V_\ell(x_{k,l})=\nabla V_\ell(x_k).
$
Summarizing, we have
\[
v=\lim_{k\to\infty} \nabla V(x_k)=\lim_{k\to\infty} \nabla V_\ell(x_k)=\nabla V_\ell(x).
\]

Since we proved $S_1(x)=S_2(x)$, then $\eqref{eq:NCLgengrad}$ follows from~\eqref{eq:limClark}.
\end{proof}
Having shown~\eqref{eq:NCLgengrad}, equation~\eqref{eq:PPFLieDerivative} in Proposition~\ref{lemma:propertiesofppf} directly follows. The next statement completes the proof of Proposition~\ref{lemma:propertiesofppf}.

\begin{lemma}
Consider $\cX=\{X_i,\cO_i\}_{i\in \cI}$, a \emph{proper partition} of $\R^n$. If  $V\in \mathscr{P}(\cX)$, then $V$ is nonpathological. 
\end{lemma}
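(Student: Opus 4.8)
The plan is to reduce the claim, via the gradient formula \eqref{eq:NCLgengrad} just established in Lemma~\ref{prop:PropoAppendix}, to a one-dimensional statement about $V\circ\varphi$. Recall from Definition~\ref{def:nonpat} that $V$ is nonpathological precisely when, for every $\varphi\in AC(\R_+,\R^n)$ and almost every $t$, the quantity $\inp{v}{\dot\varphi(t)}$ is the same for all $v\in\partial V(\varphi(t))$. Since by \eqref{eq:NCLgengrad} the set $\partial V(\varphi(t))$ is the convex hull of $\{\nabla V_\ell(\varphi(t))\mid \ell\in\cI_\cX(\varphi(t))\}$, and a convex combination of copies of a single real number $a$ equals $a$, it suffices to prove that for almost every $t$ the number $\inp{\nabla V_\ell(\varphi(t))}{\dot\varphi(t)}$ does not depend on which index $\ell\in\cI_\cX(\varphi(t))$ is chosen.

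First I would fix $\varphi\in AC(\R_+,\R^n)$ and record the standard regularity facts: $V\circ\varphi$ is absolutely continuous (locally Lipschitz composed with AC), hence differentiable almost everywhere, and $\varphi$ is differentiable almost everywhere. For each $\ell\in\cI$ set $T_\ell:=\{t\ge 0\mid\varphi(t)\in X_\ell\}$, which is closed (preimage of the closed set $X_\ell$ under the continuous map $\varphi$), and $\bigcup_{\ell\in\cI}T_\ell=\R_+$ because $\bigcup_{\ell\in\cI}X_\ell=\R^n$. By the Lebesgue density theorem, almost every point of $T_\ell$ is a density point of $T_\ell$; since $\cI$ is finite, for almost every $t$ the following hold simultaneously: $\varphi$ and $V\circ\varphi$ are differentiable at $t$, and $t$ is a density point of $T_\ell$ for every $\ell$ with $t\in T_\ell$.

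The core computation is carried out at such a ``good'' time $t>0$, for an arbitrary $\ell\in\cI_\cX(\varphi(t))$ (equivalently $t\in T_\ell$). Because $t$ is a density point of $T_\ell$ there is a sequence $s_n\to 0$, $s_n\neq 0$, with $t+s_n\in T_\ell$; then $\varphi(t),\varphi(t+s_n)\in X_\ell\subseteq\cO_\ell$, so $V(\varphi(t))=V_\ell(\varphi(t))$ and $V(\varphi(t+s_n))=V_\ell(\varphi(t+s_n))$. Differentiability of $V\circ\varphi$ at $t$ gives $\frac{d}{dt}(V\circ\varphi)(t)=\lim_{n\to\infty}\frac{V_\ell(\varphi(t+s_n))-V_\ell(\varphi(t))}{s_n}$, and a first-order expansion of the $\cC^1$ function $V_\ell$ at $\varphi(t)$, together with $\frac{\varphi(t+s_n)-\varphi(t)}{s_n}\to\dot\varphi(t)$ and the bound $\frac{|\varphi(t+s_n)-\varphi(t)|}{|s_n|}\to|\dot\varphi(t)|$ (which controls the remainder), identifies this limit as $\inp{\nabla V_\ell(\varphi(t))}{\dot\varphi(t)}$. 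Since the left-hand side is independent of $\ell$, all these inner products coincide; setting $a_t:=\frac{d}{dt}(V\circ\varphi)(t)$ and invoking the reduction of the first paragraph finishes the argument.

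I expect the only delicate point to be the limit in the last step: one must control the Taylor remainder uniformly along the possibly erratic sequence $\varphi(t+s_n)$, including the degenerate case $\varphi(t+s_n)=\varphi(t)$, but this is routine once $\varphi$ is known to be differentiable at $t$; all the rest is bookkeeping with null sets and the density theorem. (An alternative route avoiding the remainder estimate: replace each $V_\ell$ on a neighborhood of $X_\ell$ by a global $\cC^1$ function $\widetilde V_\ell$ obtained from a smooth cutoff supported in $\cO_\ell$, apply the chain rule to the AC function $\widetilde V_\ell\circ\varphi$, and use the elementary fact that two absolutely continuous functions agreeing on a measurable set have equal derivatives almost everywhere on that set.)
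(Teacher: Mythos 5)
Your proof is correct, but it takes a genuinely different route from the paper's. Both arguments start from the same reduction: by the gradient formula $\partial V(x)=\co\{\nabla V_\ell(x)\,\vert\,\ell\in \cI_{\cX}(x)\}$ of the preceding lemma, it suffices to show that for almost every $t$ the numbers $\inp{\nabla V_\ell(\varphi(t))}{\dot \varphi(t)}$ agree over the active indices $\ell\in\cI_\cX(\varphi(t))$. From there the paper reasons on the time axis by contradiction: if two active indices $\ell_1,\ell_2$ disagree at $t$, then $V_{\ell_1}\circ\varphi-V_{\ell_2}\circ\varphi$ vanishes at $t$ with nonzero derivative, so the pair cannot stay simultaneously active on a punctured neighborhood of $t$; each disagreement time is therefore isolated, and the bad set, a finite union of countable sets, is null. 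You instead pass to the closed preimages $T_\ell=\varphi^{-1}(X_\ell)$, use the Lebesgue density theorem to produce, for almost every $t$ and every active $\ell$, a sequence $s_n\to 0$ with $t+s_n\in T_\ell$, and identify each $\inp{\nabla V_\ell(\varphi(t))}{\dot\varphi(t)}$ with the single number $\frac{d}{dt}(V\circ\varphi)(t)$ by evaluating the difference quotient of $V\circ\varphi$ along that sequence. Your route is more constructive: it exhibits the common value $a_t$ explicitly as $\frac{d}{dt}V(\varphi(t))$ (which is precisely the content of Proposition~\ref{prop:nonpat2} and of the inclusion~\eqref{eq:maininclusion}) and needs no case analysis over pairs of indices; the price is the density theorem, though in fact you only need the weaker fact that almost every point of a closed set is an accumulation point of it, since the isolated points form a countable set. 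The paper's route avoids that measure-theoretic input, using only countability of sets of isolated points, at the cost of the iteration over index pairs. Your treatment of the one delicate analytic point --- controlling the Taylor remainder $o(|\varphi(t+s_n)-\varphi(t)|)/|s_n|$ via the boundedness of $|\varphi(t+s_n)-\varphi(t)|/|s_n|$, with the degenerate case $\varphi(t+s_n)=\varphi(t)$ handled separately --- is exactly right, so there is no gap.
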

\begin{proof}
Recalling Definition~\ref{def:nonpat} we must show that, given any $\varphi \in AC(\R_+, \R^n)$, $\partial V(\varphi(t))$ is  a subset of an affine subspace orthogonal to $\dot \varphi(t)$, for almost all $t\in \R_+$, namely that $\exists a_t \in \R$ such that
\begin{equation}\label{eq:AppendixAffineSUbspace}
\inp{v}{\dot \varphi(t)}=a_t, \hskip0.5cm \forall v \in \partial V(\varphi(t)).
\end{equation}
Since $\varphi:\R_+\to\R^n$ is absolutely continuous and $V:\R^n\to \R$ is locally Lipschitz we have  that $\varphi$ and $V(\varphi(t))$ are differentiable almost everywhere, 
i.e. there exists a set of measure zero $\cN\subseteq\R_+$ such that 
$\dot{\varphi}(t)$ and $\frac{d}{dt}V(\varphi(\cdot))(t)$ both exist for every $t\in \R_+\setminus\cN$. 
Using~\eqref{eq:NCLgengrad} in Lemma~\ref{prop:PropoAppendix}, to ensure~\eqref{eq:AppendixAffineSUbspace} it is enough to show that, for almost all $t\in \R_+\setminus \cN$, there exists $a_t \in \R$ such that 
\begin{equation}\label{eq:AppendixEquation2}
\inp{\nabla V_\ell(\varphi(t))}{\dot \varphi(t)}=a_t, \;\;\forall\,\ell \in \cI_{\mathcal{X}}(\varphi(t)).
\end{equation}
Fix any $t\in \R_+\setminus \cN$. Either~\eqref{eq:AppendixEquation2} holds for that $t$, or there exist $\ell_1,\ell_2 \in \cI_{\mathcal{X}}(\varphi(t))$ such that $\ell_1\neq\ell_2$ and 
\[
\inp{\nabla V_{\ell_1}(\varphi(t))}{\dot \varphi(t)}\neq \inp{\nabla V_{\ell_2}(\varphi(t))}{\dot \varphi(t)}.
\]
In this second case we have
\[
\begin{aligned}
\frac{d}{dt}&\left(V_{\ell_1}(\varphi(t))-V_{\ell_2}(\varphi(t))\right)\\ =&\inp{\nabla V_{\ell_1}(\varphi(t))}{\dot \varphi(t)}-\inp{\nabla V_{\ell_2}(\varphi(t))}{\dot \varphi(t)}\neq 0.
\end{aligned}
\]
Thus, by continuity, there exists $\varepsilon >0$  small enough such that $V_{\ell_1}(\varphi(\widetilde t))\neq V_{\ell_2}(\varphi(\widetilde t))$ , for all $\widetilde t \in (t-\varepsilon,t+\varepsilon) \setminus \{t\}$, which implies that either $\ell_1\notin\cI_{\mathcal{X}}(\varphi(\widetilde t))$ or $\ell_2\notin\cI_{\mathcal{X}}(\varphi(\widetilde t))$ (or both), for all such $\widetilde t$, since, by Definition \ref{defn:piecewise},
\[
\ell_1,\ell_2\in \cI_{\cX}(x)\;\;\Rightarrow\;\;V(x)=V_{\ell_1}(x)=V_{\ell_2}(x),
\]
for any $x\in \R^n$. Iterating the argument, this shows that for any point $t$ where two or more scalar products $\inp{\nabla V_{\ell_j}(\varphi(t))}{ \dot \varphi(t)}$ \virgolette{disagree} in~\eqref{eq:AppendixEquation2}, $t$ is isolated. We conclude by recalling that a set of isolated point is countable \cite[Page 180]{hrbacek1999introduction} and thus has measure zero, as to be proven.
\end{proof}



\bibliographystyle{plain}
\bibliography{biblio1}

\end{document}